\title{Complete $(2+1)$-dimensional 
Ricci flow spacetimes}
\author{Luke Thomas Peachey}
\date{}
\newcommand{\abs}[1]{\lvert#1\rvert}
\newcommand{\norm}[1]{\lVert#1\rVert}
\newtheorem{thm}{Theorem}[section] 
\newtheorem{lem}[thm]{Lemma} 
\newtheorem*{claim}{Claim} 
\newtheorem{cor}[thm]{Corollary} 
\newtheorem*{conj}{Conjecture}
\theoremstyle{definition}
\newtheorem{defn}[thm]{Definition}
\newtheorem{eg}[thm]{Example}
\theoremstyle{remark} 
\newtheorem*{rem}{Remark}
\DeclareMathOperator\supp{supp}
\DeclareMathOperator\Ric{Ric} 
\def\XXint#1#2#3{{\setbox0=\hbox{$#1{#2#3}{\int}$ }
\vcenter{\hbox{$#2#3$ }}\kern-.6\wd0}}
\begin{document}

\maketitle

\begin{abstract}
Ricci flow spacetimes were introduced by Kleiner \& Lott as a way to describe Ricci flow through singularities, and have since been used elsewhere in the literature, prompting the question of their rigidity. In $(2+1)$-dimensions, we show that every complete and sufficiently regular spacetime must be a cylindrical spacetime. That is, if the metric is complete on each spatial slice, after imposing a necessary continuity condition, we can conclude that every spatial slice must be diffeomorphic to a fixed surface, and the Ricci flow spacetime is isometric to a classical Ricci flow on this surface.
\end{abstract}

\tableofcontents

\section{Introduction}
\subsection{Motivation}\label{section 1.1}
Given a smooth manifold $M^n$ equipped with a smooth family of Riemannian metrics $g(t)$ for $t \in (0,T)$, we say that $g(t)$ is a Ricci flow on $M \times (0,T)$, if the metrics solve the equation
\begin{equation}\label{eqn RF}
\frac{\partial g}{\partial t}(t)= -2\Ric{g(t)},
\end{equation}
at every point in $M \times (0,T)$. Since equation (\ref{eqn RF}) is local, we could instead ask for a family of metrics defined only on an open subset of $M \times (0,T)$, and which solve equation (\ref{eqn RF}) on this open subset.

\begin{defn}[Spacetime within an ambient space]\label{defn space-time}
Given a smooth manifold $M^n$, we say that $\mathcal{M}$ is a spacetime in the ambient space $M \times (0,T)$ if
\begin{itemize}
\item $\mathcal{M}$ is an open subset of $M \times (0,T)$ equipped with the product topology.
\item The spatial slice of $\mathcal{M}$ at time $t \in (0,T)$,
\begin{equation*}
\mathcal{M}_t := \{ x \in M : (x,t) \in \mathcal{M} \},
\end{equation*}
is non-empty, for all $t \in (0,T)$.
\end{itemize}
\end{defn}

\begin{defn}
Let $\mathcal{M}$ be a spacetime in $M \times (0,T)$. A Ricci flow on $\mathcal{M}$ is a smooth family of Riemannian metrics $g(t)$ on $\mathcal{M}_t$, for each $t \in (0,T)$, satisfying equation (\ref{eqn RF}) at every point in $\mathcal{M}$. We call such a Ricci flow complete if $g(t)$ is a complete metric on the spatial slice $\mathcal{M}_t$, for all $t \in (0,T)$.
\end{defn}

The following should be considered as the motivating question behind this paper:
\begin{center}
    \textit{Which spacetimes $\mathcal{M}$ in a given ambient space $M \times (0,T)$ admit complete Ricci flows?}
\end{center}

\begin{eg}
Suppose that $M^2$ is a surface. Let $N \subseteq M$ be a fixed subsurface. If $\mathcal{M}$ is chosen so that $\mathcal{M}_t = N$ for every $t \in (0,T)$, then $\mathcal{M}$ does admit a complete Ricci flow. One way to see this would be to fix a metric $g_0$ on $N$ (with large enough volume depending on the conformal type of $N$) and use the existence result of Giesen \& Topping \cite{giesen2011existence} to find an instantaneously complete Ricci flow on $N \times [0,T)$ starting from this initial data. Topologically, these spacetimes looks like a cylinder.
\end{eg}

A priori, we may guess that any spacetime that admits a complete Ricci flow should look like a cylinder. Unlike for closed manifolds however, the topology of non-closed manifolds can change within a complete Ricci flow without encountering a singularity.

\begin{eg}\label{eg1}
Fix $t_0>0$ and a point $p \in \mathbb{T}^2 := S^1 \times S^1$ in the torus. We choose our ambient space to be $\mathbb{T}^2 \times (0,\infty)$, and our spacetime to be the open subset $\mathcal{M} := \mathbb{T}^2 \times (0,\infty) \setminus \{ p \} \times (0,t_0]$. Our spatial slices are
\begin{equation*}
\mathcal{M}_t := \begin{cases}
\mathbb{T}^2  \setminus \{ p \} &: \forall t \in (0,t_0]\\
\mathbb{T}^2 &: \forall t > t_0
\end{cases}.
\end{equation*}
If we take $g(t)$ to be a homeothetically expanding complete hyperbolic metric on $\mathcal{M}_t$ for $t \in (0,t_0]$, we can then cap off the hyperbolic cusp at $p$ after time $t_0$, to give the complete contracting cusp Ricci flow $g(t)$ on $\mathcal{M}_t$, for $t \in (t_0,\infty)$ \cite[][Theorem 1.2]{topping2012uniqueness}.
\end{eg}

\begin{eg}\label{eg conical}
Consider $\mathcal{M} \subseteq \mathbb{C} \times (0,1)$ to be some subset of the ambient space whose spatial slice varies in time. For example, we could choose $\mathcal{M}_t = D_{2+t}$, the disk centred at the origin of radius $2+t$ (with respect to the Euclidean metric), for all $t \in (0,1)$. Can we find a complete Ricci flow on this spacetime $\mathcal{M}$? What about the case that $\mathcal{M}_t = D_{2-t}$, for all $t \in (0,1)$?
\end{eg}

\begin{figure}
    \centering
\begin{tikzpicture}
\draw[thick] (-6.5,0) -- (-6.5,4);
\draw[thick] (-3.5,0) -- (-3.5,4);
\draw[thick] (-5,0) ellipse (1.5 and 0.5);
\draw[thick] (-5,4) ellipse (1.5 and 0.5);
\node[below] at (-5,-0.75) {$\mathcal{M}_t = D_2$};

\draw[thick] (-1.5,0) -- (-0.5,4);
\draw[thick] (1.5,0) -- (0.5,4);
\draw[thick] (0,0) ellipse (1.5 and 0.5);
\draw[thick] (0,4) ellipse (0.5 and 0.166);
\node[below] at (0,-0.75) {$\mathcal{M}_t = D_{2-t}$};

\draw[thick] (3.5,0) -- (2.5,4);
\draw[thick] (6.5,0) -- (7.5,4);
\draw[thick] (5,0) ellipse (1.5 and 0.5);
\draw[thick] (5,4) ellipse (2.5 and 0.833);
\node[below] at (5,-0.75) {$\mathcal{M}_t = D_{2+t}$};
\end{tikzpicture}
 \caption{An illustration of spacetimes within $\mathbb{C} \times (0,1)$.}
\end{figure}
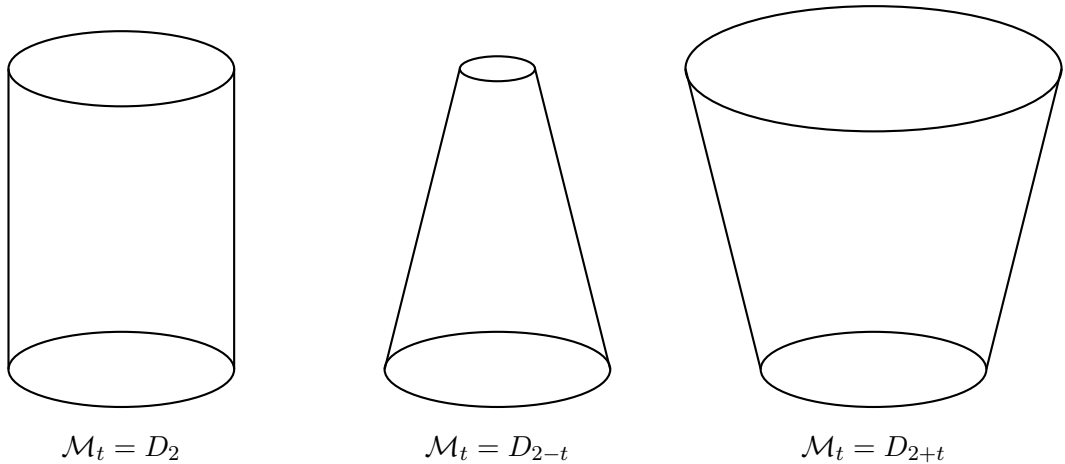

We shall show that the conical spacetimes mentioned in Example~\ref{eg conical} do \textbf{not} admit complete Ricci flows. See Corollary~\ref{cor cylinder}.
\begin{rem}
Unlike what we have considered so far, there are multiple ways in the literature to formulate a Ricci flow on a changing underlying manifold without reference to an ambient space. In this paper, we will use the language of a Ricci flow spacetime, first formulated in \cite{kleiner2017singular}. By using this formulation, it provides immediate context for the results in this paper. However, when working with this more general definition, one should always keep in mind the easy to visualise examples discussed above, where our spacetime is an open subset of a larger ambient space. Nothing is lost by doing so, as we will ultimately reduce our more general Ricci flow spacetime to one within an ambient space by virtue of an embedding lemma (see Section~\ref{section 3}).
\end{rem}

\subsection{Set-up}\label{section 1.2}
Given a smooth surface equipped with a smooth metric, there exists a unique instantaneously complete Ricci flow on the surface starting from this smooth metric \cite{giesen2011existence}, \cite{topping2015uniqueness}. On the other hand, in higher dimensions there are smooth manifolds equipped with smooth metrics from which we do not expect to be able to start the Ricci flow.\footnote{For example, consider $S^2 \times \mathbb{R}$ equipped with a metric so that it looks geometrically like a countable collection of 3-spheres connected by thinner and thinner necks of increasing length \cite{topping2020ricci}.} However, imposing curvature bounds on our initial metric leads to the following well-known short-time existence conjecture in 3-dimensions.\footnote{It is unclear who the conjecture is originally attributed to, although the conjecture arises naturally from the work of Shi in \cite{shi1989complete}.}

\begin{conj}
Let $(M^3,g_0)$ be a $3$-dimensional complete Riemannian manifold with non-negative Ricci curvature $\Ric(g_0)\geq 0$. Then there exists $T>0$ and a smooth family of complete metrics $g(t)$ on $M \times [0,T)$ such that $g(0) = g_0$ and $g(t)$ is a solution to the Ricci flow equation (\ref{eqn RF}) on $M \times (0,T)$.
\end{conj}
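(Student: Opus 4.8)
Since this is stated as a conjecture rather than a theorem, what follows is a plan of attack --- essentially the strategy underlying the known partial results. The idea is to reduce to Shi's short-time existence theorem \cite{shi1989complete} by exhaustion and compactness, using the Ricci lower bound to supply the a priori control needed to pass to the limit. First I would fix an exhaustion $M = \bigcup_i \Omega_i$ by relatively compact open sets and, for each $i$, construct a complete metric $g_0^{(i)}$ on $M$ of bounded curvature that agrees with $g_0$ on $\Omega_i$ (a standard cutoff-and-interpolation construction). Shi's theorem then produces complete bounded-curvature Ricci flows $g^{(i)}(t)$ on $M \times [0, T_i)$ with $g^{(i)}(0) = g_0^{(i)}$, and it remains to extract a limit as $i \to \infty$.

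The Ricci bound enters in the search for estimates uniform in $i$. In dimension three the condition $\Ric \geq 0$ is preserved under Ricci flow --- on the relevant face of the curvature cone one computes $\frac{d}{dt}(\mu+\nu) = 2\mu^2 \geq 0$ for the ordered eigenvalues of the curvature operator --- so on regions eventually inside the $\Omega_j$ the flows $g^{(i)}(t)$ carry an (essentially) non-negative Ricci bound on a time interval $[0,T]$ with $T > 0$ independent of $i$. Combined with Bishop--Gromov volume comparison, which gives an at-most-Euclidean volume upper bound, one would hope to derive uniform local bounds on $\abs{\operatorname{Rm}(g^{(i)}(t))}$ away from $t = 0$ together with $T_i \geq T$, and then invoke Hamilton--Cheeger--Gromov compactness to obtain a subsequential limit flow $g(t)$ on $M \times (0,T)$; that $g(t)$ extends smoothly to $t = 0$ with $g(0) = g_0$ would follow from the local estimates and the fact that the $g_0^{(i)}$ stabilise to $g_0$ on compacta.

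The main obstacle --- and the reason the conjecture remains open in full generality --- is precisely this uniform local curvature estimate near $t = 0$. With no a priori upper curvature bound, parabolic smoothing cannot be applied directly, and one must manufacture curvature control out of the lower Ricci bound alone; the delicate point is that $\Ric \geq 0$ yields volume \emph{upper} bounds but not the volume \emph{lower} bounds that Perelman's pseudolocality theorem requires, so any argument must first identify scales and regions where the geometry is genuinely under control and then propagate that control, uniformly in the exhaustion. It is exactly this propagation step that the known results secure only under extra hypotheses (bounded geometry at infinity, quantitative non-collapsedness, or a stronger curvature-operator assumption).
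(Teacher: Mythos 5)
You have correctly recognised that this statement is presented in the paper as a \emph{conjecture}, not a theorem: the paper offers no proof of it, and instead cites Lai's spacetime construction \cite{lai2020producing} as only a partial resolution. So there is no ``paper's own proof'' against which to measure your argument, and accordingly you have (appropriately) supplied a strategy sketch rather than a proof. That sketch is an accurate account of the standard exhaustion approach: the $2\mu^2 \ge 0$ computation on the face $\mu + \nu = 0$ of the curvature cone is the right verification that $\Ric \ge 0$ is preserved in dimension three, the appeal to Shi for the approximating flows is the correct starting point, and you correctly identify the genuine obstruction, namely the absence of a uniform local curvature estimate near $t=0$ in the non-collapsed-free setting, where Bishop--Gromov gives volume \emph{upper} bounds under $\Ric \ge 0$ but pseudolocality needs volume \emph{lower} bounds. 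One small caveat worth making explicit: since the modified data $g_0^{(i)}$ agree with $g_0$ only on $\Omega_i$, the flows $g^{(i)}(t)$ need not have $\Ric \ge 0$ globally, so even the ``preserved'' lower Ricci bound is only available via a localisation argument, which is itself nontrivial without an upper curvature bound. Beyond this, there is nothing to correct: the conjecture remains open, and you have said exactly as much.
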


A partial resolution to this conjecture was given by Lai by considering Ricci flow spacetimes. Recall the following definition \cite{kleiner2017singular}.
\begin{defn}
A Ricci flow spacetime is a tuple $(\mathcal{M}^{n+1} , \mathfrak{t} , \partial_t , g)$ where
\begin{itemize}
\item $\mathcal{M}$ is a smooth and connected $(n+1)$-manifold (without boundary).
\item The time function $\mathfrak{t}$ is a smooth submersion $\mathfrak{t} : \mathcal{M} \twoheadrightarrow I$, for some open interval $I \subseteq \mathbb{R}$.
\item $\partial_t$ is a smooth vector field on $\mathcal{M}$ satisfying $\partial_t ( \mathfrak{t} ) \equiv 1$.
\item $g$ is a smooth inner product on the bundle $T\mathcal{M}^{spat} := \ker(d\mathfrak{t})$, such that its restriction $g(t)$ to the time slice $\mathcal{M}_{t} = \mathfrak{t}^{-1}(t)$ is a Riemannian metric, for all $t \in I$.
\item $g$ and $\partial_t$ satisfy the Ricci flow spacetime equation $\mathcal{L}_{\partial_t} g = -2 \Ric(g)$.
\end{itemize}
A Ricci flow spacetime is said to be complete if $g(t)$ is a complete Riemannian metric on $\mathcal{M}_t$, for every $t \in I$.
\end{defn}

\begin{eg}[Spacetimes within an ambient space]
Suppose $M^n$ is a connected smooth manifold. An important class of easy to visualise examples of Ricci flow spacetimes are ones lying inside the cylinder $M^n \times \mathbb{R}$, mentioned above in Subsection~\ref{section 1.1}. More precisely, suppose $\mathcal{M}^{n+1}$ is an open subset of $M^n \times I$ (equipped with the product topology), $\mathfrak{t}$ is the restriction of the standard projection $t: M^n \times I \rightarrow I$, and $\partial_t$ is the retriction of  $\frac{\partial}{\partial t}$. Since we specify that the time function is a submersion, each spatial slice $\mathcal{M}_t \subseteq M$ is a non-empty open subset of $M$, for $t \in I$. The Ricci flow spacetime equation then corresponds to the metrics $g(t)$ locally satisfy the usual Ricci flow equation (\ref{eqn RF}). We say that we have a Ricci flow spacetime $(\mathcal{M}^{n+1},g)$ in the ambient space $M^n \times I$. If $\mathcal{M}^{n+1}$ is equal to the entire ambient space, then we say it is a \textit{cylindrical} spacetime and denote it by $(M^n \times I,g)$.
\end{eg}

\begin{defn}
A pair of Ricci flow spacetimes $(\mathcal{M}^{n+1},\mathfrak{t},\partial_t,g)$, $(\mathcal{N}^{n+1},\mathfrak{s},\partial_s,G)$ are \textit{isomorphic} if $\mathfrak{t}(\mathcal{M}) = I = \mathfrak{s}(\mathcal{N})$, and there exists a smooth diffeomorphism $\Phi : \mathcal{M} \rightarrow \mathcal{N}$, such that
\begin{enumerate}[label=(\roman*)]
  \item For each $t \in I$, the restriction $\Phi : \mathcal{M}_t \rightarrow \mathcal{N}_t$ is a diffeomorphism;
    \item $\mathfrak{t} = \mathfrak{s} \circ \Phi$;
    \item $\Phi_*(\partial_t) = \partial_s$;
    \item $g(t) = \Phi^*(G(t))$.
\end{enumerate}
In this case, we write $(\mathcal{M}^{n+1},\mathfrak{t},\partial_t,g) \cong (\mathcal{N}^{n+1},\mathfrak{s},\partial_s,G)$.
\end{defn}
Returning to the conjecture, in \cite{lai2020producing}, Lai constructed a Ricci flow spacetime $(\mathcal{M}^{3+1} , \mathfrak{t} , \partial_t , g)$ containing within it a Ricci flow on $M \times (0,T)$. That is, after restricting to a suitable subset of $\mathcal{M}$, the corresponding Ricci flow spacetime is isomorphic to a cylindrical spacetime $(M^3\times(0,T),g)$. Moreover, this Ricci flow $g(t)$ on $M \times (0,T)$ extends to a smooth family of metrics on $M \times [0,T)$ with $g(0) = g_0$.\par

Although this Ricci flow $g(t)$ on $M \times [0,T)$ may not necessarily be complete, it lies within the larger Ricci flow spacetime $(\mathcal{M}^{3+1} , \mathfrak{t} , \partial_t , g)$ which does satisfy a completeness like property.\footnote{More precisely, the spacetime is forward $0$-complete and weakly backward $0$-complete. See \cite{lai2020producing} for the precise definitions.} We are therefore motivated to ask what the structure of such a Ricci flow spacetime can be. For example, if we could show that the Ricci flow spacetime $(\mathcal{M}^{3+1} , \mathfrak{t} , \partial_t , g)$ constructed by Lai was in fact a complete and cylindrical spacetime itself, this would lead to a full resolution of the short-time existence conjecture stated above.\par

In order to say something about the structure of $\mathcal{M}$, we need to look at how points inside different timeslices are associated to one another under the flow of the vector field $\partial_t$. Recall the following definition also taken from \cite{kleiner2017singular}.

\begin{defn}\label{defn worldlines}
Let $(\mathcal{M} , \mathfrak{t} , \partial_t , g)$ be a Ricci flow spacetime.
The worldline of a point $x \in \mathcal{M}$ is the maximal integral curve $I_x \rightarrow \mathcal{M}$, $t \mapsto x(t)$ of $\partial_t$ that passes through $x$ at time $\mathfrak{t}(x) \in I_x$.
\end{defn}
More generally, for a subset $U \subseteq \mathcal{M}_s$ of some timeslice ($s \in I$), for each $ t \in I$ we set
\begin{equation*}
    U(t) := \{ x(t) \in \mathcal{M}_t : x \in U, t \in I_x \},
\end{equation*}
and denote those times where the flow lines exist for all points in $U$ by $I_U := \bigcap_{x \in U} I_x$.

\begin{eg}
Suppose $(\mathcal{M}^{n+1},g)$ is a Ricci flow spacetime in the ambient space $M^n \times I$. Then, for any point $(x_0,t_0) \in \mathcal{M} \subseteq M \times I$, the interval $I_{(x_0,t_0)}$ is the connected component of $ \mathcal{M} \cap (\{x_0 \} \times I)$ containing $t_0$, and $x_0(t) = (x_0,t)$ for each $t \in I_{(x_0,t_0)}$. Similarly, given a subset $U \subseteq M$, we have that $U(t) = \mathcal{M} \cap (U \times \{t\})$, for any $t \in I$. In the special case $(M^n \times I,g)$ is cylindrical, $I_{(x_0,t_0)} = I$ for every $(x_0,t_0) \in M \times I$, and $U(t) = U \times \{t\}$ for every $U \subseteq M$.
\end{eg}

\begin{figure}
    \centering
    
\begin{tikzpicture}

\draw[thick] (-5,0) -- (-5,4);
\draw[thick, red] (-4.5,0.63) -- (-4.5,2.17);
\draw[thick, blue] (-4,0.9) -- (-4,1.8);

\draw[dashed, thin, blue] (-2,0.9) -- (-2,1.8);
\draw[dotted, thin, gray] (-2,0.9) -- (-4,0.9);
\draw[dotted, thin, gray] (-2,1.8) -- (-4,1.8);

\draw[dashed, thin, red] (-1.5,0.63) -- (-1.5,2.17);
\draw[dotted, thin, gray] (-1.5,0.63) -- (-4.5,0.63);
\draw[dotted, thin, gray] (-1.5,2.17) -- (-4.5,2.17);

\draw[ultra thick] (0,0) .. controls (-6,2) and (2,2) .. (-2,4);
\draw[ultra thick] (3,0) .. controls (3,2) and (0,2) .. (0,4);
\draw[dashed, thick] (-0.83,3) -- (0.31,3);
\draw[dashed, thick] (-2.25,1.25) -- (2.45,1.25);
\node[] at (-3,3.5) {$\mathcal{M}^{n+1}$};
\node[] at (-5.25,2) {$I$};
\node[red] at (-4.5,0.25) {$I_x$};
\node[blue] at (-3.9,0.25) {$I_U$};

\node[] at (3,1.25) {$\mathcal{M}_s$};

\node[above, red] at (-1.25,1.25) {$x$};
\draw[blue, thick, opacity=1] (-2,1.25) -- (0,1.25);
\draw[blue, thick, opacity=0.5] (-0.83,3) -- (0,3);
\filldraw [red] (-1.5,1.25) circle (1pt);

\node[below,blue] at (-0.75,1.25) {$U$};
\node[] at (0.75,3) {$\mathcal{M}_t$};
\node[blue, opacity=0.5] at (-0.4,2.65) {$U(t)$};

\end{tikzpicture}
  \caption{An illustration of Definition~\ref{defn worldlines}}
  \label{fig:my_label}
\end{figure}
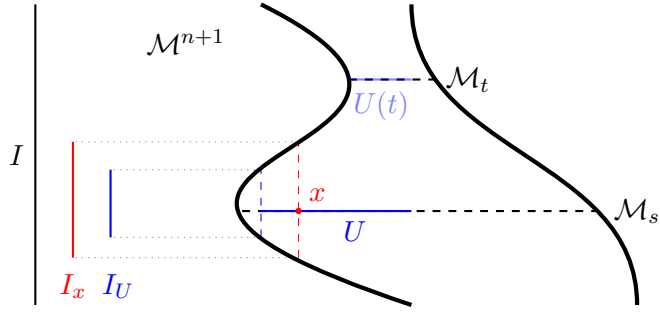

Although we are interested in $(3+1)$-dimensional Ricci flow spacetimes, we shall consider the $(2+1)$-dimensional case as a concrete starting point. The following theorem shows that, for a complete $(2+1)$-dimensional Ricci flow spacetime, worldlines always persist until the final time.

\begin{restatable}[Complete spacetimes are expanding]{thm}{expanding}\label{thm expanding}
Let $(\mathcal{M}^{2+1} , \mathfrak{t} , \partial_t , g)$ be a complete Ricci flow spacetime with $I = (0,T)$. Then $\mathcal{M}$ is expanding. That is, the vanishing times 
\begin{equation*}
    T_x := \sup I_x = T, \quad \forall x \in \mathcal{M}.
\end{equation*}
\end{restatable}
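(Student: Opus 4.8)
The plan is to argue by contradiction after reducing to a concrete ambient picture. Suppose some $x\in\mathcal M$ has $T_x<T$. By the embedding lemma of Section~\ref{section 3} we may take $\mathcal M$ to be an open subset of $M^2\times(0,T)$, with $\mathfrak t$ the time projection and $\partial_t=\partial/\partial t$, so that the worldline of $x$ is simply $t\mapsto(x,t)$, defined on the open interval $I_x=(\sigma_x,T_x)$. As this is a maximal integral curve of the autonomous vector field $\partial_t$ on $\mathcal M$ and $T_x<\sup I=T$, it must leave every compact subset of $\mathcal M$ as $t\uparrow T_x$ (otherwise it would extend past $T_x$, contradicting maximality); concretely $(x,t)\to(x,T_x)$ with $(x,T_x)\notin\mathcal M$. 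The entire goal is to contradict this by trapping the worldline in a compact subset of $\mathcal M$ near time $T_x$.

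The one genuine analytic input I would use is the a priori lower curvature bound for complete Ricci flows on surfaces: applying the maximum principle to $\partial_tR=\Delta R+R^2$ with the spatially constant barrier $t\mapsto-(t-s)^{-1}$ and letting $s\downarrow 0$ gives $R(\cdot,t)\ge -c/t$ on every slice $\mathcal M_t$, for a universal $c>0$. (The only delicate point here is justifying the maximum principle on the complete, possibly unbounded-curvature, noncompact slices, which is by now standard for surface flows.) Hence $\partial_tg\le(c/t)\,g$, and the standard distance-distortion estimate — bounding the $g(t)$-length of a fixed $g(s)$-minimising geodesic — gives $d_{g(t)}(a,b)\le(t/s)^{c/2}\,d_{g(s)}(a,b)$ for $a,b$ in a common component of the slices and $0<s\le t<T$. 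Crucially this bound stays finite as $t\uparrow T_x$, so the worldline of $x$ cannot drift off to the end (spatial infinity) of its timeslice.

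I would then dichotomise according to whether the component $C_t\subseteq\mathcal M_t$ containing $(x,t)$ pinches entirely off at $T_x$. Suppose first that, for some $t_1$ close to $T_x$, every worldline through $C_{t_1}$ dies by time $T_x$. Then $(C_{t_1},g(t))$ is a complete Ricci flow on a surface that becomes extinct at the finite time $T_x$, so by the classification of Ricci flow on surfaces \cite{giesen2011existence} it must be a closed surface of genus zero shrinking to a round point. But a compact component of a timeslice is carried diffeomorphically by the flow of $\partial_t$ onto a compact component of every nearby timeslice (all its worldlines survive on a common open interval, by lower semicontinuity of the existence time on the compact set $C_{t_1}$), so connectedness of $\mathcal M$ forces $\mathcal M$ to be the cylinder over $C_{t_1}$ and hence $\mathfrak t(\mathcal M)$ to be a proper subinterval of $(0,T)$ — contradicting $I=(0,T)$. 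In the remaining case there is a spatial point $w_0$ with $(w_0,T_x)\in\mathcal M$ whose worldline lies, for $t$ near $T_x$, in the same component of $\mathcal M_t$ as $(x,t)$; then $d_{g(t)}(x,w_0)\le D<\infty$ uniformly for $t$ near $T_x$ by distance distortion. Since $g(t)\to g(T_x)$ smoothly on a fixed compact ball $\bar B_{g(T_x)}(w_0,2D)$, the metrics there become uniformly comparable, so for $t$ near $T_x$ the ball $\bar B_{g(t)}(w_0,D)$ stays inside it — a compact subset of $\mathcal M_{T_x}$ by completeness at time $T_x$. Hence the spatial point $x$ lies in this fixed compact set $K'$, and as $K'\times\{T_x\}\subseteq\mathcal M$ is compact with $\mathcal M$ open, $K'\times[T_x-\delta,T_x]\subseteq\mathcal M$; the worldline $t\mapsto(x,t)$ is therefore trapped in this compact subset of $\mathcal M$ as $t\uparrow T_x$, the required contradiction.

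The step I expect to be the main obstacle is the dichotomy — showing that a component cannot pinch entirely off before the final time — together with the bookkeeping needed to follow components of the (a priori disconnected) timeslices and to promote the ``compact component'' subcase into a clopen-ness argument for connectedness of $\mathcal M$. The curvature lower bound and the resulting distance control are what make the trapping argument run; everything else is soft, except for this dichotomy and the justification of the noncompact maximum principle underlying the curvature estimate.
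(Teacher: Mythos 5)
Your proposal runs into a fatal circularity at its first step. You invoke ``the embedding lemma of Section~\ref{section 3}'' to reduce to a spacetime lying in an ambient $M^2\times(0,T)$, but both Lemma~\ref{lem emb} and Theorem~\ref{thm emb} have \emph{expanding} as a hypothesis: the map $\mathcal M_s\hookrightarrow\mathcal M_\tau$, $x\mapsto x(\tau)$, which is the whole content of the embedding, is only well-defined on all of $\mathcal M_s$ precisely because worldlines persist up to time $\tau$. The paper deliberately proves Theorem~\ref{thm expanding} first (Section~\ref{section 2}) and only then deduces the ambient-space picture (Corollary~\ref{Cor ambient}). You cannot read a global spatial coordinate off the spacetime before you know it is expanding; the worldline of $x$ is a maximal integral curve $t\mapsto x(t)$ with no canonical identification across slices, and there is no ``$(x,T_x)$'' to speak of.

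Even setting that aside, the dichotomy is not sound as stated. In case (1) you assert that if every worldline through $C_{t_1}$ dies by $T_x$ then $(C_{t_1},g(t))$ is a complete Ricci flow that becomes extinct, hence a shrinking round sphere. But $g(t)$ for $t>t_1$ is not a metric on $C_{t_1}$: it lives only on the shrinking open subsets $C_{t_1}(t)$, and there is no reason these should carry complete metrics, nor that the family should be a classical Ricci flow on a fixed surface. Likewise, your distance-distortion bound presumes that a $g(s)$-minimising geodesic from $x(s)$ to $w_0(s)$ survives to time $t$; but individual points along it could a priori drop out of the spacetime before $T_x$, which is exactly the possibility one is trying to rule out. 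The paper sidesteps both problems by first proving the Harnack estimate (Lemma~\ref{lem harnack}), which bounds $\ell_g(\Gamma(t))$ only over the surviving part of an arc, and then Lemma~\ref{lem empty}, which shows by a Cauchy-sequence argument that the set of surviving parameters is clopen in $[0,1]$. This gives local constancy of vanishing times without an ambient space, and then expansion for spatially-connected spacetimes (Theorem~\ref{thm sc expanding}) and a decomposition via time-preserving paths (Lemmas~\ref{lem timepath sc}--\ref{lem m=1}) for the general case. Your trapping idea in case (2) is morally close to the claim inside Lemma~\ref{lem empty}, but it needs the intrinsic Harnack estimate, not a distance-distortion inequality applied in a spatial chart you do not yet have.
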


As seen in Example~\ref{eg1}, we cannot expect a general complete Ricci flow spacetime to be cylindrical. This leads us to formulate the following regularity hypotheses that we shall impose on our spacetimes.

\begin{defn}[Continuous spacetime]\label{defn cts}
Let $(\mathcal{M}^{n+1} , \mathfrak{t} , \partial_t , g)$ be a Ricci flow spacetime. For any $s,t \in I$, define the temporal closure of the time slice $\mathcal{M}_s$ at time $t$ to be
\begin{equation*}
    \overline{\mathcal{M}_s} (t) := \{ x \in \mathcal{M}_t : s \in \overline{I_x} \} \supseteq \mathcal{M}_s(t).
\end{equation*}
We say that the Ricci flow spacetime is \textit{continuous} if
\begin{equation*}
     (\overline{\mathcal{M}_s} (t))^\circ =\mathcal{M}_s(t), \quad \forall s,t \in I.
\end{equation*}
\end{defn}

This condition prevents cusps from being capped off as in Example~\ref{eg1} (see Lemma~\ref{lem no isolated}). For the next condition, recall the following definition of a time-preserving path \cite{kleiner2017singular}.
\begin{defn}
Let $(\mathcal{M} , \mathfrak{t} , \partial_t , g)$ be a Ricci flow spacetime. Suppose $J \subseteq I$ is an interval and let $\eta:J \rightarrow \mathcal{M}$ be a smooth path. We say that $\eta$ is \textit{time-preserving} if $\mathfrak{t} \circ \eta (t) = t$, $\forall t \in J$.
\end{defn}
Given a point in spacetime, we may ask how far backwards in time we can see from that point. More precisely, consider every time-preserving path ending at this point, and then take the infimum of their starting times.

\begin{defn}[Hindsight function]\label{defn hindsight}
Let $(\mathcal{M} , \mathfrak{t} , \partial_t , g)$ be a Ricci flow spacetime. Define the \textit{hindsight function} $\mathfrak{h} : \mathcal{M} \rightarrow \overline{I}$ by
\begin{equation*}
    \mathfrak{h}(x) := \inf \{ s \in I  : \textnormal{there exists a time-preserving path } \eta: [s,\mathfrak{t}(x)] \rightarrow \mathcal{M} \textnormal{ with } \eta \circ \mathfrak{t}(x) =x \}.
\end{equation*}
\end{defn}
The following condition, in conjunction with continuity, rules out any auxiliary data from being added at positive time.

\begin{defn}[Initially determined]
Let $(\mathcal{M} , \mathfrak{t} , \partial_t , g)$ be a Ricci flow spacetime with $\inf I = 0$. $\mathcal{M}$ is said to be \textit{initially determined} if $\mathfrak{h} \equiv 0$, where $\mathfrak{h}$ denotes the hindsight function (see Definition~\ref{defn hindsight}).
\end{defn}
With these extra hypotheses, complete $(2+1)$-dimensional spacetimes are in fact cylindrical.

\begin{restatable}{thm}{countable}\label{thm static}
Let $(\mathcal{M}^{2+1} , \mathfrak{t} , \partial_t , g)$ be a complete, continuous and initially determined Ricci flow spacetime with $I = (0,T)$. Then $I_x = (0,T)$, for every $x \in \mathcal{M}$.
\end{restatable}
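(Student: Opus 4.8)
The strategy is to combine Theorem~\ref{thm expanding} with the two regularity hypotheses, arguing by contradiction. By Theorem~\ref{thm expanding} every worldline already reaches the final time, so $I_x=(\tau_x,T)$ where $\tau_x:=\inf I_x\in[0,T)$, and it suffices to show $\tau_x=0$ for all $x$. Suppose instead that $\tau:=\tau_x>0$ for some $x\in\mathcal{M}_{t_0}$.

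First I would exploit \emph{initially determined} to pass to a convenient slice. Fixing $\epsilon\in(0,\tau)$, there is a time-preserving path $\eta:[\epsilon,t_0]\to\mathcal{M}$ with $\eta(t_0)=x$, and since worldlines run all the way to $T$ we may flow it forward along worldlines, obtaining a continuous path $\tilde\eta(r):=\eta(r)(t_0)$ in $\mathcal{M}_{t_0}$ from $\tilde\eta(\epsilon)$ (whose worldline reaches back past $\epsilon$, so $\tau_{\tilde\eta(\epsilon)}<\epsilon$) to $\tilde\eta(t_0)=x$ (whose worldline stops at $\tau$). In two dimensions this path moreover has finite $g(t_0)$-length, because the scalar curvature lower bound $R(\cdot,t)\ge -1/t$ (valid for a complete Ricci flow over $(0,T)$) controls the distortion of the worldline flow: a spatial vector transported forward obeys $\tfrac{d}{dt}\abs{v}_{g(t)}^2=-R\,\abs{v}_{g(t)}^2\le\tfrac1t\abs{v}_{g(t)}^2$, so lengths grow by at most $(t_0/\epsilon)^{1/2}$. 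Now fix $s\in(\epsilon,\tau)$. The set $A_s:=\{y\in\mathcal{M}_{t_0}:\tau_y<s\}$ is open (upper semicontinuity of $\tau_\bullet$, from openness of the flow domain) and coincides with the image of the forward worldline flow $F_s:\mathcal{M}_s\to\mathcal{M}_{t_0}$, a diffeomorphism onto $A_s$. Since $\tilde\eta$ starts in $A_s$ and ends at $x\notin A_s$, there is a last parameter $s_1$ with $w(t_0):=\tilde\eta(s_1)\in\partial A_s$; writing $w:=\eta(s_1)\in\mathcal{M}_{s_1}$ we get $\tau_w\ge s$ (so $w$ does not flow back to time $s$) while $\tau_w<s_1$, and $w(t_0)$ is a $g(t_0)$-limit of points $a_j\in A_s$.

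The contradiction must now be squeezed out of completeness of the earlier slice together with a two-dimensional estimate \emph{complementary} to the one above, and this is the step I expect to be the real obstacle. Each $a_j$ flows back to $b_j:=a_j(s)\in\mathcal{M}_s$; a short argument — if $(b_j)$ subconverged in $\mathcal{M}_s$, flow the limit forward again and use continuity of the worldline flow together with uniqueness of worldlines to deduce that $w$ itself flows back to time $s$, contradicting $\tau_w\ge s$ — shows, via completeness of $(\mathcal{M}_s,g(s))$, that the $b_j$ escape to infinity in $\mathcal{M}_s$. But the $a_j$ converge in $\mathcal{M}_{t_0}$, so the diffeomorphism $F_s$ carries a divergent sequence to a convergent one; excluding this requires the pulled-back metric $F_s^*g(t_0)$ to be bounded below by a multiple of $g(s)$ near the escaping direction — equivalently an \emph{upper} bound on $R$ (hence on $\abs{\Ric}$) along the relevant worldlines, which in two dimensions is not automatic (only the lower bound is). Establishing such control is where the completeness hypothesis, and the continuity hypothesis (via Lemma~\ref{lem no isolated}, which forbids the isolated cusp-capping of Example~\ref{eg1}), must genuinely enter; I would attempt it after invoking the embedding lemma of Section~\ref{section 3} to realise $\mathcal{M}$ inside an ambient product $M\times(0,T)$ and comparing $g(t)$ with an auxiliary complete Ricci flow on $M$ through the two-dimensional maximum/comparison principle for the conformal factor. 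Once the backward distortion is controlled, $(b_j)$ is Cauchy hence convergent, forcing $w$ to flow back to time $s$ against $\tau_w\ge s$; therefore $\tau_x=0$ for every $x$, which with Theorem~\ref{thm expanding} yields $I_x=(0,T)$.
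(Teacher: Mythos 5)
Your preliminary reductions are sound: Theorem~\ref{thm expanding} does reduce the question to showing $\tau_x := \inf I_x = 0$, the forward worldline flow $F_s:\mathcal{M}_s\to\mathcal{M}_{t_0}$ is an open embedding onto $A_s$, and the Harnack length estimate (Lemma~\ref{lem harnack}) gives your path $\tilde\eta$ finite $g(t_0)$-length. But the core of the theorem is absent, and as you yourself flag, your sketch stalls at exactly the essential step. The difficulty is genuine: the scalar curvature lower bound $R \ge -1/t$ of Corollary~\ref{Chen 2.3} only controls how fast the conformal factor can \emph{increase} in time, so a bounded sequence $a_j$ in $\mathcal{M}_{t_0}$ may perfectly well flow backward to an unbounded sequence $b_j$ in $\mathcal{M}_s$ (this is exactly what happens near a forming cusp and is not blocked by any local curvature estimate). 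No upper bound on $R$ along worldlines approaching $\partial A_s$ is available, nor is one true in general, so no local contradiction can be extracted from curvature bounds alone; that is why your boundary-point strategy cannot close.

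The paper bypasses this with a global barrier argument built from several pieces you do not supply. After reducing to a conformal spacetime in the disk (Theorem~\ref{thm static v3}, via lifting and the embedding lemma), continuity is recast as parabolic properness of the hyperbolic conformal factor $H$ (Lemma~\ref{lem hyp cts}), and this is what makes the comparison principle of Lemma~\ref{comp 2} valid on the whole noncompact spacetime. Then a Cantor--Bendixson decomposition $\overline D \setminus \mathcal{M}_{t_1} = P \sqcup X$, together with the Hebert--Lacey theorem, produces a non-atomic Radon measure $\mu$ with $\supp\mu = P$; the Ricci flow $G(t)$ started from $\mu$ and its parabolic rescalings $G_\lambda$ serve as lower barriers because their initial time blow-up is hyperbolic off $P$ (Lemma~\ref{lem hyp}), and the Schwarz lemma gives $G_\lambda < g$ on $\mathcal{M}_{(0,t_1)}$. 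The comparison principle propagates $g \ge G_\lambda$ forward for every $\lambda$, and the volume-decay control of Lemma~\ref{lem lower vol} makes the $G_\lambda$-volume of any small ball around a point of $P$ blow up linearly in $\lambda$, contradicting finiteness of $g(t_2)$-volume; the scattered part $X$ is then disposed of by Corollary~\ref{cor scattered}. Your closing gesture towards ``an auxiliary complete Ricci flow and a comparison principle'' points in the right general direction, but the specific auxiliary flow started from a measure on the perfect part of the missing set, the hyperbolicity of its initial time blow-up, and the volume contradiction are all essential and missing, so the proof is incomplete.
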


\begin{cor}[Rigidity result]\label{cor cylinder}
Let $(\mathcal{M}^{2+1} , \mathfrak{t} , \partial_t , g)$ be a complete, continuous and initially determined Ricci flow spacetime with $I = (0,T)$. Then there exists a connected smooth surface $M^2$ such that $(\mathcal{M}^{2+1} , \mathfrak{t} , \partial_t , g)$ is isomorphic to a complete Ricci flow on $M \times (0,T)$. That is, there exists a smooth diffeomorphism $\Phi : \mathcal{M} \rightarrow M \times (0,T)$ such that
\begin{enumerate}[label=(\roman*)]
    \item For each $t \in (0,T)$, the restriction $\Phi : \mathcal{M}_t \rightarrow M \times \{t\}$ is a diffeomorphism;
    \item $\mathfrak{t} = t \circ \Phi$, where $t: M \times (0,T) \rightarrow (0,T)$ denotes the standard projection map;
    \item $\Phi_*(\partial_t) = \frac{\partial}{\partial t}$;
    \item $\Phi_*(g(t))$ is a complete Ricci flow on $M \times (0,T)$.
\end{enumerate}
\end{cor}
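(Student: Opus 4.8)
The plan is to use Theorem~\ref{thm static} to parametrise the whole spacetime by the worldlines through a single fixed time slice. Fix any $t_0 \in (0,T)$ and set $M := \mathcal{M}_{t_0}$, a smooth surface. By Theorem~\ref{thm static} we have $I_x = (0,T)$ for every $x \in \mathcal{M}$, so the integral curve of $\partial_t$ through any point is defined on the full interval $(0,T)$. Consequently the flow of $\partial_t$ is a smooth map $\Psi$ defined on the open set $\{(x,\tau) \in \mathcal{M} \times \mathbb{R} : \mathfrak{t}(x) + \tau \in (0,T)\}$ (standard ODE theory; since $\partial_t(\mathfrak{t}) \equiv 1$ one has $\mathfrak{t}(\Psi(x,\tau)) = \mathfrak{t}(x) + \tau$, so the flow respects the time function). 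The first step is to record that for each $t \in (0,T)$ the map $\phi_t : M \to \mathcal{M}_t$, $\phi_t(y) := \Psi(y, t - t_0) = y(t)$, is a diffeomorphism, with smooth inverse $z \mapsto z(t_0)$; well-definedness and surjectivity of this inverse are exactly the content of the conclusion $I_x = (0,T)$.

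Next I would define $\Phi : \mathcal{M} \to M \times (0,T)$ by $\Phi(x) := (x(t_0), \mathfrak{t}(x))$, that is, flow $x$ back to the reference slice and remember its time. This is smooth because $x \mapsto x(t_0) = \Psi(x, t_0 - \mathfrak{t}(x))$ is a composition of smooth maps, and it has smooth inverse $(y,t) \mapsto \phi_t(y) = y(t)$, so $\Phi$ is a diffeomorphism. Property (ii), $\mathfrak{t} = t \circ \Phi$, holds by construction; property (i) is immediate since $\Phi|_{\mathcal{M}_t} = (\phi_t^{-1}, t)$ and $\phi_t$ is a diffeomorphism. For (iii), note that along any worldline $s \mapsto x(s)$ the first component $x(s)(t_0) = x(t_0)$ is constant (it is the unique intersection of that worldline with $M$), so $\Phi(x(s)) = (x(t_0), s)$ and hence $d\Phi(\partial_t) = \frac{\partial}{\partial t}$. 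Finally, for (iv), set $G(t) := (\phi_t)_* g(t)$; then $\Phi|_{\mathcal{M}_t} : (\mathcal{M}_t, g(t)) \to (M \times \{t\}, G(t))$ is an isometry, so completeness of $g(t)$ gives completeness of $G(t)$, and pushing the spacetime equation $\mathcal{L}_{\partial_t} g = -2\Ric(g)$ forward along $\Phi$ using (iii) turns it into $\frac{\partial}{\partial t} G(t) = -2 \Ric(G(t))$, the classical Ricci flow equation on $M \times (0,T)$. Connectedness of $M$ then follows since $M \times (0,T) \cong \mathcal{M}$ is connected.

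The substantive content is Theorem~\ref{thm static}, which is being invoked as a black box, so there is no genuine analytic difficulty at this stage. The only points requiring care are the bookkeeping for the flow of $\partial_t$ (which is not a flow in the usual sense on all of $\mathbb{R}$, since $\partial_t$ is transverse to the time slices) — in particular verifying that $\phi_t$ is \emph{globally}, not merely locally, a diffeomorphism, which is precisely where the full-interval conclusion $I_x = (0,T)$ is used — and checking that the maps assembled from $\Psi$ and $\mathfrak{t}$ are smooth on all of $\mathcal{M}$.
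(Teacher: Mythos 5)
Your proposal is correct, and it matches the natural derivation of the corollary from Theorem~\ref{thm static}. The paper does not spell out a proof here; the closest analogue in the text is Lemma~\ref{lem emb}, which constructs the same map $x \mapsto (x(\tau), \mathfrak{t}(x))$ for expanding spacetimes, but only on $\mathcal{M}_{(0,\tau)}$, and then Theorem~\ref{thm emb} passes to a direct limit over an increasing sequence of reference slices to reach the final time. You observe, correctly, that under the stronger conclusion $I_x = (0,T)$ for all $x$ provided by Theorem~\ref{thm static}, a \emph{single} interior reference slice $\mathcal{M}_{t_0}$ already suffices: every worldline reaches $t_0$ both from earlier and later times, so $\Phi(x) = (x(t_0), \mathfrak{t}(x))$ is globally defined, smooth, and bijective with smooth inverse $(y,t) \mapsto y(t)$. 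This lets you bypass the direct-limit construction entirely, which is a genuine simplification available only because of the full-interval conclusion of Theorem~\ref{thm static}; in the general expanding case the direct limit is needed precisely because worldlines need not extend backward to a common slice. Your verifications of (i)--(iv), including the push-forward of the spacetime equation to the classical Ricci flow equation via $\Phi_*(\partial_t) = \partial/\partial t$ and completeness transferring across the slicewise isometries, are the same Lie-derivative computation as in Lemma~\ref{lem emb} and are fine. One very minor omission: you never state that $\mathcal{M}_{t_0}$ is a smooth surface (it is, being the level set of the submersion $\mathfrak{t}$), but this is implicit throughout the paper and does not affect correctness.
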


\subsection{Outline of the paper}
In Section~\ref{section 2}, we use a Harnack estimate to show that spacetimes with connected spatial slices are expanding. We then decompose any connected spacetime into the union of such spacetimes to show Theorem~\ref{thm expanding}.\par

In Section~\ref{section 3}, we show that expanding spacetimes can be embedded into a larger ambient space. We then use the ambient space to define a global conformal structure on our spacetime, allowing us to significantly simplify Theorem~\ref{thm static}.\par

In Section~\ref{section 4}, we further reduce Theorem~\ref{thm static} to complete and conformal Ricci flows on spacetimes within the unit disk. We then formulate a geometric condition equivalent to continuity for such spacetimes, which we use to then prove a comparison principle.\par

In Section~\ref{section 5}, we introduce the idea of an initial time blow-up. Taking any non-atomic Radon measure on a Riemann surface, we can start the Ricci flow from this measure \cite{topping2021smoothing}. By looking at larger and larger parabolic rescalings of this flow away from the support of the initial measure, in the limit at time zero, we have a hyperbolic metric. We combine this with the comparison principle from Section~\ref{section 4} to show Theorem~\ref{thm static}.

\subsection{Preliminaries}

Since $\partial_t$ is a smooth vector field on $\mathcal{M}$, we can consider its maximal flow $\{ (x,t) \in \mathcal{M} \times I : t \in I_x \} \rightarrow \mathcal{M}$, mapping $(x,t) \mapsto x(t)$. The following are standard properties of the flow.\footnote{See \cite{lee2013smooth} for details.}
\begin{enumerate}[label=(\alph*)]
    \item The set of points we can flow for time $t$, $\mathcal{M}^{(t)} := \{ x \in \mathcal{M} : \mathfrak{t}(x) + t \in I_x\}$, is an open subset of $\mathcal{M}$, for every $t \in \mathbb{R}$.
    \item The map $\mathcal{M}^{(t)} \rightarrow \mathcal{M}^{(-t)}$, sending $x \mapsto x(\mathfrak{t}(x) + t)$, is a smooth diffeomorphism.
\end{enumerate}
For any $s,t \in I$, consider flowing all of the points in spatial slice at time $s$ into the spatial slice at time $t$. This is $\mathcal{M}_s(t)$ in the notation from Definition~\ref{defn worldlines}. Similarly, consider $\mathcal{M}_t(s)$, which we get from flowing all of the points in the spatial slice at time $t$, into the spatial slice at time $s$.\par

Since $\mathcal{M}_s(t) = \mathcal{M}_t \cap \mathcal{M}^{(s-t)}$, (a) tells us that $\mathcal{M}_s(t)$ is open in $\mathcal{M}_t$. Similarly $\mathcal{M}_t(s)$ is open in $\mathcal{M}_s$. Then by (b), we have the smooth diffeomorphism $ \mathcal{M}_{t}(s) \rightarrow \mathcal{M}_s(t)$, mapping $x \mapsto x(t)$. In particular, can conclude that the flow of $\partial_t$ preserves open subsets within our spatial slices:
\begin{lem}
If $U \subseteq \mathcal{M}_s$ is open for some $s \in I$, then $U(t) \subseteq \mathcal{M}_t$ is open, $\forall t \in I$.
\end{lem}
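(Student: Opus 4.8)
The plan is to deduce this purely from properties (a) and (b) of the maximal flow, together with the observations in the paragraph immediately preceding the lemma, by chaining three facts. The one thing to keep in mind throughout is that $U$ being open in $\mathcal{M}_s$ does \emph{not} make it open in $\mathcal{M}$ — the slice $\mathcal{M}_s = \mathfrak{t}^{-1}(s)$ is a hypersurface with empty interior — so one cannot simply push $U$ forward by the ambient flow diffeomorphism; the argument must route through the induced diffeomorphism between open subsets of spatial slices.

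Fix $t \in I$ and set $r := t - s$. First I would identify $U(t)$ concretely: a point $x \in U$ contributes to $U(t)$ exactly when $t \in I_x$, i.e. when $\mathfrak{t}(x) + r = s + r \in I_x$, i.e. when $x \in \mathcal{M}^{(r)}$, and in that case its contribution is $x(t)$. Hence $U(t) = \{\, x(t) : x \in U \cap \mathcal{M}^{(r)} \,\}$, and since $U \subseteq \mathcal{M}_s$ we also have $U \cap \mathcal{M}^{(r)} \subseteq \mathcal{M}_s \cap \mathcal{M}^{(r)} = \mathcal{M}_t(s)$. Next, $U \cap \mathcal{M}^{(r)}$ is open in $\mathcal{M}_t(s)$: indeed $\mathcal{M}^{(r)}$ is open in $\mathcal{M}$ by (a), so $\mathcal{M}^{(r)} \cap \mathcal{M}_s$ is open in $\mathcal{M}_s$; intersecting with the open set $U \subseteq \mathcal{M}_s$ shows $U \cap \mathcal{M}^{(r)}$ is open in $\mathcal{M}_s$, hence a fortiori open in the subspace $\mathcal{M}_t(s)$.

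Finally, by (b) the map $x \mapsto x(t)$ restricts to a diffeomorphism $\mathcal{M}_t(s) \to \mathcal{M}_s(t)$ (this is the restriction of the ambient diffeomorphism $\mathcal{M}^{(r)} \to \mathcal{M}^{(-r)}$ to these slices, exactly as noted just before the lemma), so it carries the open set $U \cap \mathcal{M}^{(r)}$ onto an open subset of $\mathcal{M}_s(t)$, which by the identification above is precisely $U(t)$. Since $\mathcal{M}_s(t) = \mathcal{M}_t \cap \mathcal{M}^{(-r)}$ is open in $\mathcal{M}_t$ (again by (a)), a set open in $\mathcal{M}_s(t)$ is open in $\mathcal{M}_t$; therefore $U(t)$ is open in $\mathcal{M}_t$. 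The case $t \notin I_U$-type degeneracies cause no trouble: if $U \cap \mathcal{M}^{(r)} = \varnothing$ then $U(t) = \varnothing$, which is open.

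There is no real obstacle here — every ingredient has already been assembled in the excerpt. The only point requiring a moment's care is the one flagged above: openness in a spatial slice is strictly weaker than openness in $\mathcal{M}$, so one works with the slice-level diffeomorphism of (b), not the ambient-level one.
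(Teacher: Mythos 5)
Your proof is correct and follows essentially the same route as the paper: you identify $U \cap \mathcal{M}^{(r)} = U \cap \mathcal{M}_t(s)$ as an open subset of $\mathcal{M}_t(s)$, push it through the slice-level diffeomorphism $\mathcal{M}_t(s) \to \mathcal{M}_s(t)$ of (b) to get $U(t)$, and conclude using that $\mathcal{M}_s(t)$ is open in $\mathcal{M}_t$ by (a). The paper's proof is a three-line compression of exactly these steps, and your caveat that one must use the slice-level rather than ambient diffeomorphism is the right thing to flag.
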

\begin{proof}
$U \cap \mathcal{M}_t(s)$ is open in $\mathcal{M}_s$, whose image under the diffeomorphism $\mathcal{M}_t(s) \rightarrow \mathcal{M}_{s}(t)$, $x \mapsto x(t)$, is $U(t)$. Since $\mathcal{M}_{s}(t)$ is open in $\mathcal{M}_t$, the result follows.
\end{proof}
Fix an open subset of a time slice, $U \subseteq \mathcal{M}_{s}$, for some $s \in I$. Given an interval $J \subseteq I$, we define the parabolic cylinder
\begin{equation}\label{eqn cylinder}
    U(J) := \bigcup_{t \in J} U(t) \subseteq \mathcal{M}.
\end{equation}
For $x \in \mathcal{M}$ and $r>0$, consider the special case where $U$ is the ball centred at $x$ of radius $r$
\begin{equation*}
    B(x,r) := B_{g(\mathfrak{t}(x))}(x,r) \subseteq \mathcal{M}_{\mathfrak{t}(x)}.
\end{equation*}
In the language of equation (\ref{eqn cylinder}), we set
\begin{equation*}
  C(x,r) := [B(x,r)](\mathfrak{t}(x) - r^2, \mathfrak{t}(x) + r^2),
\end{equation*}
to be the parabolic cylinder centred at $x$ of radius $r$.\par

Given a general parabolic cylinder $U(J)$, we say that it is unscathed if $J \subseteq I_{U}$ (see Definition~\ref{defn worldlines}). Since $\mathfrak{t}$ is a smooth submersion, for any $x \in \mathcal{M}$, there exists smooth coordinates $(x_1,\cdots,x_n,\mathfrak{t})$ locally around $x$, so that $\partial_t \cdot x_i \equiv 0$ for $i \in \{1,\ldots,n\}$. In particular, we can choose $r>0$ sufficiently small so that the parabolic cylinder $C(x,r)$ is unscathed.
\begin{lem}\label{lem cpt int}
Fix $s \in I$. For any open subset $U \subseteq \mathcal{M}_{s}$, and any open sub-interval $J \subseteq I$, the parabolic cylinder $U(J)$ is open in $\mathcal{M}$.
\end{lem}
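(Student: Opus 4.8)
The plan is to reduce openness in $\mathcal{M}$ to the openness of the individual time slices $U(t)$ (already established in the lemma above) together with the local product structure furnished by the submersion charts recalled just before the statement. First I would dispose of the dependence on $J$: since $z \in U(I) = \bigcup_{t \in I} U(t)$ forces $\mathfrak{t}(z) \in I$ and $z \in U(\mathfrak{t}(z))$, one has $U(J) = U(I) \cap \mathfrak{t}^{-1}(J)$; as $\mathfrak{t}$ is continuous and $J$ is open, it is enough to prove that the full saturation $U(I)$ is open in $\mathcal{M}$.

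So fix $y \in U(I)$, and write $y = x(t_0)$ with $x \in U$ and $t_0 := \mathfrak{t}(y) \in I_x$; along the worldline of $y$ we then have $y(s) = x \in U$. Take a submersion chart around $y$: coordinates $(x_1, \ldots, x_n, \mathfrak{t})$ in which $\partial_t = \partial/\partial\mathfrak{t}$, with chart domain, after shrinking, an unscathed parabolic cylinder $O(t_0 - \delta, t_0 + \delta)$ for some $\delta > 0$ and open $O \subseteq \mathcal{M}_{t_0}$ with $y \in O$, so that the integral curves of $\partial_t$ meeting the chart are exactly the coordinate lines $t \mapsto (\,\cdot\,, t)$, $t \in (t_0 - \delta, t_0 + \delta)$; note this chart domain is open in $\mathcal{M}$. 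By the lemma above, $U(t_0)$ is open in $\mathcal{M}_{t_0}$ and contains $y$, so after intersecting with it I may shrink $O$ further so that $O \subseteq U(t_0)$ (the resulting smaller parabolic cylinder is still unscathed and still open in $\mathcal{M}$).

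It remains to check the inclusion $O(t_0 - \delta, t_0 + \delta) \subseteq U(I)$, which exhibits the chart domain as an open neighbourhood of $y$ inside $U(I)$ and finishes the proof. Given $z$ in this cylinder, let $z' \in O$ be the time-$t_0$ footpoint of the coordinate-line worldline through $z$ (this worldline is defined on all of $(t_0 - \delta, t_0 + \delta)$ precisely because the cylinder is unscathed). Since $z' \in O \subseteq U(t_0)$, write $z' = x'(t_0)$ with $x' \in U$ and $t_0 \in I_{x'}$; because $z'$ and $x'$ lie on a common worldline, $I_{x'} = I_{z'} \ni \mathfrak{t}(z)$ and $z = x'(\mathfrak{t}(z))$, so $z \in U(\mathfrak{t}(z)) \subseteq U(I)$.

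The only step I would write out in full is this last inclusion: everything hinges on the chart being taken unscathed, so that its coordinate lines really are whole pieces of worldlines on $(t_0 - \delta, t_0 + \delta)$, and on the identification $I_{z'} = I_{x'}$ for points on a common worldline. An essentially equivalent route is to note that the maximal flow $(x,t) \mapsto x(t)$ is a submersion, hence an open map, and that $U(I)$ is its image of $\{(x,t) : x \in U,\ t \in I_x\}$; but since $U$ is open only in the slice $\mathcal{M}_s$ rather than in $\mathcal{M}$, making that image-of-an-open-set precise requires the same local analysis, so I would keep the chart argument.
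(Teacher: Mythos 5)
Your proposal is correct and takes essentially the same route as the paper: fix a point, take a small unscathed parabolic cylinder around it, shrink its spatial base into the (already open) flowed-out set $U(\mathfrak{t}(y))$, and conclude the whole cylinder lies in $U(J)$. The preliminary reduction $U(J) = U(I) \cap \mathfrak{t}^{-1}(J)$ is a cosmetic rearrangement of the same shrinking step the paper performs directly.
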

\begin{proof}
Fix $x \in U(J)$. For $r$ sufficiently small, the parabolic cylinder $C(x,r)$ is unscathed and open in $\mathcal{M}$. Since $U(\mathfrak{t}(x))$ is open in $\mathcal{M}_{\mathfrak{t}(x)}$ and $J$ is open in $I$, shrinking $r$ if necessary, $C(x,r) \subseteq U(J)$.
\end{proof}
\begin{lem}
Fix $s \in I$. Suppose $K \Subset \mathcal{M}_{s}$. Then there exists an open sub-interval $J \subseteq I$ containing $s$, such that $K(J)$ is unscathed.
\end{lem}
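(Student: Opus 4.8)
The plan is to upgrade the pointwise existence of unscathed parabolic cylinders, established just above, to a uniform one along all worldlines emanating from $K$, using compactness. First I would recall that, since $\mathfrak{t}$ is a smooth submersion, for each $x \in K$ there is a radius $r_x > 0$ such that the parabolic cylinder $C(x,r_x) = [B(x,r_x)](s - r_x^2, s + r_x^2)$ is unscathed; by definition this means $(s - r_x^2, s + r_x^2) \subseteq I_{B(x,r_x)} = \bigcap_{y \in B(x,r_x)} I_y$. Shrinking $r_x$ if necessary, we may also arrange $(s - r_x^2, s + r_x^2) \subseteq I$.

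Next, the balls $\{ B(x,r_x) : x \in K \}$ form an open cover of $K$ inside $\mathcal{M}_s$, so compactness of $K$ yields finitely many points $x_1, \ldots, x_N \in K$ with $K \subseteq \bigcup_{i=1}^N B(x_i, r_{x_i})$. Put $r := \min_{1 \le i \le N} r_{x_i} > 0$ and $J := (s - r^2, s + r^2)$, which is an open sub-interval of $I$ containing $s$.

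Finally I would verify $J \subseteq I_K$. Given any $y \in K$, choose $i$ with $y \in B(x_i, r_{x_i})$; then $I_y \supseteq (s - r_{x_i}^2, s + r_{x_i}^2) \supseteq J$. Since $y \in K$ was arbitrary, $J \subseteq \bigcap_{y \in K} I_y = I_K$, which is exactly the assertion that the parabolic cylinder $K(J)$ is unscathed.

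I do not expect a genuine obstacle here: all the real content sits in the earlier observation that every point of spacetime admits an unscathed parabolic cylinder (which exploits the submersion property to build adapted coordinates and then invokes standard ODE existence), and this lemma is just the routine passage from a pointwise to a uniform time of existence via a finite subcover. The only mild care needed is to intersect with $I$ so that $J$ genuinely lies in the time interval, and to note that finitely many positive radii $r_{x_i}$ have a positive minimum.
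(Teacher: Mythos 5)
Your argument is exactly the paper's: cover $K$ by small unscathed parabolic cylinders centred at points of $K$, extract a finite subcover by compactness, and take the minimum radius to get a uniform open time interval $J$. The paper merely states this in one sentence; your write-up fills in the routine details and is correct.
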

\begin{proof}
Cover $K$ by sufficiently small, unscathed parabolic cylinders centred at points in $K$, and use the compactness of $K$.
\end{proof}
Consider now the map $\Psi: U(J) \subseteq \mathcal{M} \rightarrow U \times J$, given by the inverse of the flow lines of $\partial_t$
\begin{equation*}
 \Psi(x) := (x(s) , \mathfrak{t}(x)), \quad \forall x \in U(J).
\end{equation*}
This is a diffeomorphism onto it's image $\Psi(U(J)) = \{ (x,t) \in U \times J : t \in I_x \}$. Note that $(\Psi)_*(\partial_t) = \frac{\partial}{\partial t}$, and the push forward of the metric satisfies the usual Ricci flow equation
\begin{equation*}
\frac{\partial }{\partial t} \Psi_*(g) = \Psi_*( \mathcal{L}_{\partial_t} g) = \Psi_*(-2\Ric{g}) = -2\Ric{\Psi_*(g)}, \quad \textnormal{on } \Psi(U(J)).
\end{equation*}
We call such a map, $\Psi : U(J) \subseteq \mathcal{M} \rightarrow U \times J$, \textit{cylindrical coordinates} on $U(J)$.

\section{Complete Ricci flow spacetimes are expanding}\label{section 2}
The aim of this section is to prove Theorem~\ref{thm expanding}. The key idea in the proof is Lemma~\ref{lem empty}, which states that the vanishing times of points is locally constant within each spatial slice of our spacetime. To show Lemma~\ref{lem empty} we require bounds on our metric along the worldlines. This takes the form of a Harnack estimate.

\subsection{A simple Harnack estimate for spacetimes}

This short subsection introduces a simple Harnack estimate for complete Ricci flow spacetimes. Recall that if $\inf I = 0$ and the hindsight function $\mathfrak{h} \equiv 0$, then we say that the spacetime $\mathcal{M}$ is initially determined.\par

The following is Chen's lower scalar curvature bound \cite[][Proposition 2.1]{chen2009strong} adapted to Ricci flow spacetimes. Lemma~\ref{Chen 2.1} follows from a slight modification of the original proof, the details of which are in the appendix.

\begin{restatable*}[Scalar curvature lower bound]{lem}{chen}
\label{Chen 2.1}
$\forall \delta \in (0,\frac{2}{n})$, $\exists C(\delta,n)>0$ with the following property. Let $(\mathcal{M}^{n+1} , \mathfrak{t} , \partial_t , g)$ be a Ricci flow spacetime. Fix $[t_1,t_2] \subseteq I$ and $\eta: [t_1,t_2] \rightarrow \mathcal{M}$ a time-preserving path. Let $\pi : T\mathcal{M} \rightarrow T\mathcal{M}^{spat}$ denote the spatial projection, and suppose there are constants $r_0, K> 0$ and $A\geq 2 + 24(n-1)r_0^{-2}t_2$ such that
\begin{itemize}
\item $B_{g(t)}(\eta(t),Ar_0) \Subset \mathcal{M}_t$, for every $t \in [t_1,t_2]$;
\item $\Ric(g(t)) \leq (n-1)r_0^{-2}$ on $B_{g(t)}(\eta(t),r_0)$, for every $t \in [t_1,t_2]$;
\item $R_{g(t_1)} \geq -K$ on $B_{g(t_1)}(\eta(t_1),Ar_0)$;
\item $|\pi \circ \eta'(t)|_{g(t)} \leq r_0^{-1}$, for every $t \in [t_1,t_2]$.
\end{itemize}
Then for each $t \in [t_1,t_2]$ and $x \in B_{g(t)}(\eta(t),\frac{3Ar_0}{4})$, we have
\begin{equation*}
R_{g(t)}(x) \geq \min \{ -\frac{1}{(\frac{2}{n}-\delta)(t-t_1) + \frac{1}{K} }, -\frac{C}{A^2r_0^2} \}.
\end{equation*}
\end{restatable*}

As a corollary to Lemma~\ref{Chen 2.1}, we also have a lower scalar curvature bound for complete Ricci flow spacetimes (see \cite[][Corollary 2.3]{chen2009strong} for the comparable result on cylindrical spacetimes).
\begin{cor}[Scalar curvature lower bound in complete spacetimes]\label{Chen 2.3}
Let $(\mathcal{M}^{n+1} , \mathfrak{t} , \partial_t , g)$ be a complete Ricci flow spacetime. Then for any $x \in \mathcal{M}$ we have 
\begin{equation*}
    R_{g(\mathfrak{t}(x))}(x) \geq \frac{-n}{2(\mathfrak{t}(x) - \mathfrak{h}(x))}.
\end{equation*}
If $\mathcal{M}$ is initially determined, this simplifies to 
\begin{equation*}
    R_{g(\mathfrak{t}(x))}(x) \geq \frac{-n}{2 \mathfrak{t}(x)}.
\end{equation*}
\end{cor}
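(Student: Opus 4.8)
The plan is to deduce Corollary~\ref{Chen 2.3} from Lemma~\ref{Chen 2.1} by running that estimate along time-preserving paths ending at $x$, and then taking a supremum over the available parameters. Fix $x\in\mathcal{M}$ and write $t_2:=\mathfrak{t}(x)$, $h:=\mathfrak{h}(x)$; since the worldline $I_x$ is open and contains $t_2$, a short backward flow along $\partial_t$ exhibits a time-preserving path that starts before $t_2$, so $h<t_2$. Fix $\delta\in(0,\tfrac2n)$ and $t_1\in(h,t_2)$, and let $C=C(\delta,n)$ be the constant from Lemma~\ref{Chen 2.1}. By the definition of the hindsight function there is a smooth time-preserving path $\eta:[t_1,t_2]\to\mathcal{M}$ with $\eta(t_2)=x$, and $\eta([t_1,t_2])$ is compact.

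Next I would verify the four hypotheses of Lemma~\ref{Chen 2.1} for $\eta$, with $r_0$ to be chosen small. This is where completeness enters: by Hopf--Rinow every closed metric ball $\overline{B_{g(t)}(\eta(t),r)}$ is a compact subset of $\mathcal{M}_t$, so the first hypothesis holds for every radius. Since $g$ is smooth and $\eta([t_1,t_2])$ is compact, a standard covering argument (using completeness to keep the relevant neighbourhoods relatively compact in $\mathcal{M}$) yields a radius $\rho>0$ and a constant $\Lambda\geq0$ with $\Ric(g(t))\leq\Lambda$ on $B_{g(t)}(\eta(t),\rho)$ for all $t\in[t_1,t_2]$, together with a bound $|\pi\circ\eta'(t)|_{g(t)}\leq M$ on $[t_1,t_2]$; taking $r_0\leq\rho$ with $(n-1)r_0^{-2}\geq\Lambda$ and $r_0^{-1}\geq M$ secures the second and fourth hypotheses. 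Setting $A:=2+24(n-1)r_0^{-2}t_2$, the continuity of $R_{g(t_1)}$ on the compact ball $\overline{B_{g(t_1)}(\eta(t_1),Ar_0)}$ provides a (possibly huge, but finite) constant $K>0$ with $R_{g(t_1)}\geq-K$ there, which is the third hypothesis. Finally I would shrink $r_0$ once more so that $\tfrac{C}{A^2r_0^2}\leq\tfrac{n}{2(t_2-h)}$, which is possible since $A^2r_0^2\geq(24(n-1)t_2)^2r_0^{-2}\to\infty$ as $r_0\to0$; shrinking $r_0$ only enlarges $K$, which will not matter.

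Lemma~\ref{Chen 2.1}, applied at $t=t_2$ and at the centre point $x=\eta(t_2)\in B_{g(t_2)}(\eta(t_2),\tfrac34 Ar_0)$, then gives
\[
R_{g(t_2)}(x)\;\geq\;\min\Bigl\{-\frac{1}{(\tfrac2n-\delta)(t_2-t_1)+\tfrac1K},\;-\frac{C}{A^2r_0^2}\Bigr\}.
\]
Discarding the positive summand $\tfrac1K$ and using $\tfrac{C}{A^2r_0^2}\leq\tfrac{n}{2(t_2-h)}$, the right-hand side is at least $\min\{-\tfrac1{(\frac2n-\delta)(t_2-t_1)},-\tfrac{n}{2(t_2-h)}\}=-\tfrac1{(\frac2n-\delta)(t_2-t_1)}$, the equality holding because $(\tfrac2n-\delta)(t_2-t_1)<\tfrac2n(t_2-h)$. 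The left-hand side is a fixed number while $\delta\in(0,\tfrac2n)$ and $t_1\in(h,t_2)$ were arbitrary, so taking the supremum over these parameters gives $R_{g(t_2)}(x)\geq-\tfrac{n}{2(t_2-h)}=-\tfrac{n}{2(\mathfrak{t}(x)-\mathfrak{h}(x))}$. Putting $\mathfrak{h}\equiv0$ gives the stated simplification when $\mathcal{M}$ is initially determined.

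I expect the main obstacle to be the tension between the two competing demands on $r_0$: the Ricci and speed hypotheses force $r_0$ to be small, whereas the constraint $A\geq2+24(n-1)r_0^{-2}t_2$ then forces the ball $B_{g(t_1)}(\eta(t_1),Ar_0)$, on which a lower scalar curvature bound is needed, to be large. Completeness is what breaks the deadlock: it makes even these large balls relatively compact, so the bound $-K$ exists with no control on its magnitude, and the point is that this uncontrolled $K$ is harmless because the $\tfrac1K$ term in Lemma~\ref{Chen 2.1} may be dropped before passing to the limit. (If $\mathfrak{h}(x)=-\infty$ the asserted bound reads $R\geq0$ and follows the same way by letting $t_1\to-\infty$; this and the case $\mathfrak{t}(x)\leq0$ do not arise for the spacetimes with $I=(0,T)$ that concern us.)
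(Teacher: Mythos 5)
Your proof is correct and follows essentially the same route as the paper: pick $s=t_1\in(\mathfrak{h}(x),\mathfrak{t}(x))$, produce a time-preserving path from the definition of the hindsight function, choose $r_0$ small enough for the Ricci and speed hypotheses, use completeness to get the compact containment of arbitrarily large balls (hence a finite but uncontrolled $K$), apply Lemma~\ref{Chen 2.1}, and take $\delta\searrow 0$, $t_1\searrow\mathfrak{h}(x)$. The only cosmetic difference is that the paper holds $r_0$ fixed and sends $A\to\infty$ to kill the $-C/A^2r_0^2$ term, whereas you pin $A$ to its minimal allowed value and send $r_0\to 0$; both regimes make $Ar_0\to\infty$, so they are interchangeable.
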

\begin{proof}
Fix $x \in \mathcal{M}$ and choose $s \in (\mathfrak{h}(x), \mathfrak{t}(x))$. Then by the definition of $\mathfrak{h}$ we can find a time-preserving path $\eta:[s,\mathfrak{t}(x)] \rightarrow \mathcal{M}$ such that $\eta \circ \mathfrak{t} (x) = x$. For $r_0$ sufficiently small, $\Ric(g(t)) \leq (n-1)r_0^{-2}$ on $B_{g(t)}(\eta(t),r_0)$ for every $t \in [s, \mathfrak{t}(x)]$, and $|\pi \circ \eta'(t)|_{g(t)} \leq r_0^{-1}$ for every $t \in [s,\mathfrak{t}(x)]$. For any $A > 0$, and any $t \in [s,\mathfrak{t}(x)]$, $B_{g(t)}(\eta(t) , Ar_0) \Subset \mathcal{M}_t$ follows from the completeness of the metric $g(t)$. By compactness, there exists some lower bound $R_{g(s)} \geq -K$ on $B_{g(s)}(\eta(s),Ar_0)$. So, for each $\delta \in (0,\frac{2}{n})$, choosing $A$ sufficiently large, Lemma~\ref{Chen 2.1} implies the existence of $C>0$ such that
\begin{equation}
R_{g(\mathfrak{t}(x))}(x) \geq \min \{ -\frac{1}{(\frac{2}{n}-\delta)(\mathfrak{t}(x)-s) + \frac{1}{K}}, -\frac{C}{Ar_0^2} \} \geq \min \{ -\frac{1}{(\frac{2}{n}-\delta)(\mathfrak{t}(x)-s)}, -\frac{C}{Ar_0^2} \}.
\end{equation}
Taking $A$ sufficiently large, we conclude that $R_{g(\mathfrak{t}(x))}(x) \geq -\frac{1}{(\frac{2}{n}-\delta)(\mathfrak{t}(x)-s)}$. The Corollary is finished by taking $\delta \searrow 0$ and $s \searrow \mathfrak{h}(x)$.
\end{proof}

Recall that in dimension $n=2$, Ricci flow preserves the conformal class of the metric. Within a Ricci flow spacetime $(\mathcal{M}^{2+1} , \mathfrak{t} , \partial_t , g)$, consider cylindrical coordinates $\Psi : U(I) \subseteq \mathcal{M} \rightarrow U \times I$, for some open set $U \subseteq \mathcal{M}_{s}$, $s \in I$.\par

Shrinking $U$ if necessary, we can assume that $U$, equipped with the metric $\Psi_*(g(s))$, admits isothermal coordinates $(x,y)$, so that $\Psi_*(g(s))$ is conformally equivalent to  $(dx^2 + dy^2)$ on $U \times \{s\}$. Since $\Psi_*(g)$ solves the usual Ricci flow equation on $\Psi(U(I)) \subseteq U \times I$, we see that
\begin{equation}\label{metric with conf}
\Psi_* (g) = u (dx^2+dy^2) \quad \textnormal{on } \Psi(U(I)),
\end{equation}
where $u: \Psi(U(I)) \rightarrow (0,\infty)$ is a smooth function satisfying the logarithmic fast diffusion equation (LFDE)
\begin{equation}\label{LFDE}
    \frac{\partial u}{\partial t} = \Delta \log u.
\end{equation}
Working locally, we can combine the lower scalar curvature bound with the LFDE to give the following simple Harnack estimate.

\begin{defn}
Given a smooth manifold $M$, a subset $\Gamma \subseteq M$ is a \textit{smooth arc} in $M$ if it has a smooth and regular parameterisation $\gamma:J \rightarrow M$, for some interval $J \subseteq \mathbb{R}$.
\begin{equation*}
    \Gamma = \{ \gamma(s) : s \in J \}.
\end{equation*}
If we can take $J = [0,1]$, we say that $\Gamma$ is a \textit{compact smooth arc}.
\end{defn}
\begin{lem}[Simple Harnack estimate]\label{lem harnack}
Let $(\mathcal{M}^{2+1} , \mathfrak{t} , \partial_t , g)$ be a complete Ricci flow spacetime with $I = (0,T)$. For any $x_0 \in \mathcal{M}_{t_0}$, choose $r>0$ sufficiently small such that the ball $B:= B(x_0,r) \Subset \mathcal{M}_{t_0}$ admits isothermal coordinates. Let $\Gamma$ be a smooth arc in $B$. Then
\begin{equation*}\label{weak harnack}
    \ell_g(\Gamma(t)) \leq \sqrt{\frac{t - \mathfrak{h}(x_0)}{t_0 - \mathfrak{h}(x_0)}} \cdot \ell_g(\Gamma), \quad \forall t \in [t_0,T),
\end{equation*}
where $\ell_g(\Gamma(t))$ denotes the length of $\Gamma(t) \subseteq \mathcal{M}_t$ with respect to the metric $g(t)$. If $\mathcal{M}$ is initially determined, this simplifies to
\begin{equation*}
\ell_g(\Gamma(t)) \leq \sqrt{\frac{t}{t_0}} \cdot \ell_g(\Gamma), \quad \forall t \in [t_0,T).
\end{equation*}
\end{lem}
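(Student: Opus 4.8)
The plan is to turn the statement into a scalar ODE for the length $L(t):=\ell_g(\Gamma(t))$, using the logarithmic fast diffusion equation to rewrite $L'(t)$ as an integral of scalar curvature along $\Gamma(t)$, and then to feed in the lower scalar curvature bound of Corollary~\ref{Chen 2.3}.

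First I would fix coordinates. By Theorem~\ref{thm expanding} the spacetime is expanding, so $[t_0,T)\subseteq I_x$ for every $x\in B$; using that $B\Subset\mathcal{M}_{t_0}$ one also extends a little backward in time, obtaining an unscathed parabolic cylinder on which we take cylindrical coordinates $\Psi:B((t_0-\varepsilon,T))\to B\times(t_0-\varepsilon,T)$. In particular $\Gamma(t)$ is a well-defined smooth arc in $\mathcal{M}_t$ for all $t\in[t_0,T)$. Since $B$ was chosen to carry isothermal coordinates $(x,y)$, as in \eqref{metric with conf}--\eqref{LFDE} we may write $\Psi_*(g)=u\,(dx^2+dy^2)$ with $u>0$ smooth and $\partial_t u=\Delta\log u$. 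Parametrising $\Gamma$ by a regular $\gamma:[0,1]\to B$ (it suffices to treat compact arcs; the general case follows by exhaustion, the bound being vacuous when $\ell_g(\Gamma)=\infty$), we get $L(t)=\int_0^1\sqrt{u(\gamma(a),t)}\,\lvert\gamma'(a)\rvert\,da$, which is smooth in $t$.

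Next I would differentiate and use the two–dimensional conformal identity $R_{g(t)}=-u^{-1}\Delta\log u$: combined with the LFDE this gives $\partial_t\log u=-R_{g(t)}$, whence, differentiating under the integral sign,
\[
L'(t)=\frac12\int_0^1(\partial_t\log u)(\gamma(a),t)\,\sqrt{u}\,\lvert\gamma'(a)\rvert\,da=-\frac12\int_{\Gamma(t)}R_{g(t)}\,ds_{g(t)}.
\]
By Corollary~\ref{Chen 2.3} (with $n=2$), $R_{g(t)}(\gamma(a)(t))\ge-\big(t-\mathfrak{h}(\gamma(a)(t))\big)^{-1}$. To replace $\mathfrak{h}(\gamma(a)(t))$ by $\mathfrak{h}(x_0)$ I would argue: (i) $\mathfrak{h}$ is non-increasing along worldlines — any time-preserving path reaching $\gamma(a)$ extends by the worldline of $\gamma(a)$ up to time $t$ — so $\mathfrak{h}(\gamma(a)(t))\le\mathfrak{h}(\gamma(a))$; and (ii) $\mathfrak{h}$ is constant on connected components of each time slice: given a time-preserving path landing at a point of a cylindrical patch, its final short segment can be rerouted inside the patch (and smoothed) so as to land at any other point of the patch at the same time, whence $\mathfrak{h}$ is locally constant on slices, hence constant on the connected ball $B$. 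Therefore $\mathfrak{h}(\gamma(a)(t))\le\mathfrak{h}(x_0)<t_0\le t$ for all $a$, so $L'(t)\le\big(2(t-\mathfrak{h}(x_0))\big)^{-1}L(t)$; integrating $\tfrac{d}{dt}\log L$ from $t_0$ to $t$ yields $L(t)\le\sqrt{\tfrac{t-\mathfrak{h}(x_0)}{t_0-\mathfrak{h}(x_0)}}\,L(t_0)$, and $L(t_0)=\ell_g(\Gamma)$. The initially determined case is the specialisation $\mathfrak{h}\equiv0$.

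The conformal-change computation and the differentiation under the integral sign are routine. The one step I expect to need real care is the hindsight comparison $\mathfrak{h}(\gamma(a)(t))\le\mathfrak{h}(x_0)$ — in particular establishing cleanly from the definition that $\mathfrak{h}$ is locally constant on time slices — since this is where the geometry of time-preserving paths (as opposed to worldlines) genuinely enters.
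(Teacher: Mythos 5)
Your ODE-for-length approach is genuinely different from the paper's, which establishes a pointwise bound $u(z,t)\leq\frac{t-\mathfrak{h}(x_0)}{t_0-\mathfrak{h}(x_0)}\,u(z,t_0)$ by integrating the scalar differential inequality along each \emph{individual} worldline (only as far as that worldline exists) and then integrates spatially over a possibly proper subset $J_t\subseteq J$ of parameters, enlarging to $J$ at the end because the integrand is nonnegative. Unfortunately your version contains a circularity that makes the argument fail. You invoke Theorem~\ref{thm expanding} to conclude that $[t_0,T)\subseteq I_x$ for every $x\in B$, and this is the only reason $L(t)=\int_0^1\sqrt{u(\gamma(a),t)}\,\lvert\gamma'(a)\rvert\,da$ is an integral over a fixed domain and hence differentiable in $t$. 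But Theorem~\ref{thm expanding} is proved in Section~\ref{section 2} \emph{via} Lemma~\ref{lem harnack} — the chain is Lemma~\ref{lem harnack} $\Rightarrow$ Lemma~\ref{lem empty} $\Rightarrow$ Corollary~\ref{cor dichotomy} $\Rightarrow$ Theorem~\ref{thm sc expanding} $\Rightarrow$ Theorem~\ref{thm expanding} — so you may not assume it here. Without it, the surviving parameter set $J_t=\{a\in J: t\in I_{\gamma(a)}\}$ is merely open and nested decreasing in $t$, with possible jump discontinuities, so $L(t)$ need not be $C^1$ (or even continuous), and $L'(t)=-\tfrac12\int_{\Gamma(t)}R\,ds$ has no justification. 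The fix is exactly what the paper does: derive the pointwise conformal-factor bound at each surviving $(z,t)$, then estimate $\ell_g(\Gamma(t))=\int_{J_t}\sqrt{u(\cdot,t)}\,\lvert\gamma'\rvert\leq\sqrt{\tfrac{t-\mathfrak{h}(x_0)}{t_0-\mathfrak{h}(x_0)}}\int_{J_t}\sqrt{u(\cdot,t_0)}\,\lvert\gamma'\rvert\leq\sqrt{\tfrac{t-\mathfrak{h}(x_0)}{t_0-\mathfrak{h}(x_0)}}\,\ell_g(\Gamma)$, no differentiation of $L$ required.

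On the hindsight comparison $\mathfrak{h}\circ\Psi^{-1}(z,t)\leq\mathfrak{h}(x_0)$: you are right that this step needs justifying, and your sketch (non-increasing along worldlines by extending the time-preserving path forward; locally constant on slices by rerouting the terminal segment of a time-preserving path inside a small unscathed parabolic cylinder and smoothing) is the correct argument and is something the paper itself elides, writing the step as an equality without comment. This part of your proposal is sound and is a worthwhile clarification, but it does not rescue the proof from the circularity above.
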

\begin{proof}
As we did above, there exist cylindrical coordinates $\Psi : B(I) \subseteq \mathcal{M} \rightarrow B \times I$, so that $\Psi_*(g)$ satisfies (\ref{metric with conf}) for some $u : \Psi( B(I) ) \rightarrow (0,\infty)$ solving the LFDE (\ref{LFDE}).
Since
\begin{equation*}
    \Delta \log u = - u \cdot R_{\Psi_* g} = - u \cdot \Psi_*(R_g),
\end{equation*}
we can use Corollary~\ref{Chen 2.3} to bound the time derivative of our conformal factor
\begin{equation*}
\frac{\partial u}{\partial t}(z,t) =  - u(z,t) \cdot R_{g}(\Psi^{-1}(z,t)) \leq \frac{u(z,t)}{t - \mathfrak{h} \circ \Psi^{-1}(z,t)} = \frac{u(z,t)}{t - \mathfrak{h}(x_0)}, \quad \forall (z,t) \in \Psi( B(I) ).
\end{equation*}
Note that $(z,t) \in \Psi(B(I))$ iff $[t_0,t] \subseteq I_z$. So we can integrate the above to get
\begin{equation*}
    u(z,t) \leq \left(\frac{t - \mathfrak{h}(x_0)}{t_0 - \mathfrak{h}(x_0)}\right) u(z,t_0), \quad \forall (z,t) \in \Psi(B(I)).
\end{equation*}
Let $\Gamma$ be the image of the smooth map $\gamma:J \rightarrow B$. Recall, the set $\Gamma(t)$ denotes the collection of points $\{ x(t) \in \mathcal{M}_t : x \in \Gamma, t \in I_x \}$ (see Definition~\ref{defn worldlines}). We shall consider those points in $\Gamma$ which persist until time $t$, 
\begin{equation*}
    [\Gamma(t)](s) = \{ x \in \Gamma : t \in I_x \} \subseteq \Gamma,
\end{equation*}
and let 
\begin{equation*}
    J_t := \gamma^{-1}([\Gamma(t)](s)) \subseteq J.
\end{equation*}
As $[\Gamma(t)](s)$ is open in $\Gamma$, $J_t$ is open in $J$. Since $\Psi(\Gamma(t)) = [\Gamma(t)](s) \times \{t\}$, we have
\begin{equation*}
    \ell_g(\Gamma(t)) = \ell_{\Psi_*(g)}([\Gamma(t)](s) \times \{t\}) = \int_{J_t} u( \gamma(s) , t) ^{\frac{1}{2}} \abs{\gamma'(s)} \ ds,
\end{equation*}
where $\abs{\cdot}$ is the size of a vector with respect to our local isothermal coordinates. In particular
\begin{align*}
\ell_g(\Gamma(t)) &= \int_{J_t} u( \gamma(s) , t)^{\frac{1}{2}} \abs{\gamma'(s)} \ ds \\ &\leq \sqrt{\frac{t - \mathfrak{h}(x_0)}{t_0 - \mathfrak{h}(x_0)}} \cdot \int_{J_t} u( \gamma(s) , t_0 )^{\frac{1}{2}} \abs{\gamma'(s)} \ ds \\ 
&\leq \sqrt{\frac{t - \mathfrak{h}(x_0)}{t_0 - \mathfrak{h}(x_0)}} \cdot \int_{J} u( \gamma(s) , t_0 )^{\frac{1}{2}} \abs{\gamma'(s)} \ ds \\
& = \sqrt{\frac{t - \mathfrak{h}(x_0)}{t_0 - \mathfrak{h}(x_0)}} \cdot \ell_g( \Gamma).   \qedhere
\end{align*}
\end{proof}

\begin{rem}
In higher dimensions, the same reasoning applied to the evolution equation for the volume form 
\begin{equation*}
  \mathcal{L}_{\partial_t} dV_{g} = -R_{g} dV_{g}, 
\end{equation*}
gives an analogous inequality, and hence a local upper bound on volume growth.
\end{rem}
By piecing together the above lemma locally, we have the same result for any smooth arc within a spatial slice.
\begin{lem}
Let $(\mathcal{M}^{2+1} , \mathfrak{t} , \partial_t , g)$ be a complete Ricci flow spacetime with $I = (0,T)$. Suppose $\Gamma$ is a smooth arc in $\mathcal{M}_{t_0}$. Then
\begin{equation*}
    \ell_g(\Gamma(t)) \leq \sqrt{\frac{t - \mathfrak{h}(x_0)}{t_0 - \mathfrak{h}(x_0)}} \cdot \ell_g(\Gamma), \quad \forall t \in [t_0,T).
\end{equation*}
where $\ell_g(\Gamma(t))$ denotes the length of $\Gamma(t)$ with respect to the metric $g(t)$. If $\mathcal{M}$ is initially determined, this simplifies to
\begin{equation*}
\ell_g(\Gamma(t)) \leq \sqrt{\frac{t}{t_0}} \cdot \ell_g(\Gamma), \quad \forall t \in [t_0,T).
\end{equation*}
\end{lem}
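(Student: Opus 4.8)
The plan is to reduce the global statement to the already-established local version, Lemma~\ref{lem harnack}, by a covering-and-chaining argument along the arc $\Gamma$. First I would fix $t \in [t_0,T)$ and work with the closed sub-arc $\Gamma$, parameterised by $\gamma : [0,1] \to \mathcal{M}_{t_0}$. For each parameter $s \in [0,1]$, Lemma~\ref{lem harnack} applies with $x_0 = \gamma(s)$ and gives some radius $r_s > 0$ such that $B(\gamma(s), r_s)$ admits isothermal coordinates and the length bound holds for any smooth arc contained in $B(\gamma(s), r_s)$. The balls $B_{g(t_0)}(\gamma(s), r_s/2)$ pull back under $\gamma$ to an open cover of the compact interval $[0,1]$, so I can extract a finite subcover and thereby a partition $0 = s_0 < s_1 < \cdots < s_N = 1$ with the property that each sub-arc $\Gamma_i := \gamma([s_{i-1}, s_i])$ lies inside a single ball $B(\gamma(\sigma_i), r_{\sigma_i})$ on which the local Harnack estimate is valid.

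The key point is that the constant in the estimate, $\sqrt{(t - \mathfrak{h}(x_0))/(t_0 - \mathfrak{h}(x_0))}$, depends on $x_0$ only through $\mathfrak{h}(x_0)$, and in fact only through a monotone quantity: since $t \geq t_0$, the function $h \mapsto (t-h)/(t_0-h)$ is non-decreasing on $(-\infty, t_0)$, so a larger hindsight gives a larger bound. Moreover $\mathfrak{h}$ is semicontinuous in the relevant direction — concatenating a short time-preserving path inside a ball with one realising the hindsight of a nearby point shows $\mathfrak{h}(\gamma(\sigma_i)) \leq \mathfrak{h}(x_0)$ up to an error that vanishes as the balls shrink; alternatively, one observes directly that $\mathfrak{h}$ is constant along the arc, since any two points of $\Gamma$ are joined by a path inside $\mathcal{M}_{t_0}$ which can be flowed backwards slightly. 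Either way, I would argue that $\mathfrak{h}(\gamma(\sigma_i)) = \mathfrak{h}(x_0)$ for the chosen basepoint $x_0$ (or is bounded by it), so the same constant $\sqrt{(t-\mathfrak{h}(x_0))/(t_0-\mathfrak{h}(x_0))}$ works uniformly for every sub-arc $\Gamma_i$.

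Granting this, I would finish by additivity of length: for $t \in [t_0, T)$,
\begin{align*}
\ell_g(\Gamma(t)) &= \sum_{i=1}^{N} \ell_g(\Gamma_i(t)) \leq \sum_{i=1}^{N} \sqrt{\frac{t - \mathfrak{h}(x_0)}{t_0 - \mathfrak{h}(x_0)}} \cdot \ell_g(\Gamma_i) = \sqrt{\frac{t - \mathfrak{h}(x_0)}{t_0 - \mathfrak{h}(x_0)}} \cdot \ell_g(\Gamma),
\end{align*}
where the decomposition $\Gamma(t) = \bigcup_i \Gamma_i(t)$ holds because the flow of $\partial_t$ is a bijection between the points of $\Gamma$ surviving to time $t$ and the points of $\Gamma(t)$, and the sub-arcs overlap only at finitely many worldlines, which have measure zero. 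The initially determined case follows by substituting $\mathfrak{h} \equiv 0$.

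The main obstacle I anticipate is the bookkeeping around the hindsight function: the local Harnack lemma is stated with a basepoint-dependent constant, so the argument only closes cleanly once one knows $\mathfrak{h}$ does not jump upward as one moves along $\Gamma$ (which would spoil uniformity of the constant). Establishing that $\mathfrak{h}$ is locally constant along a spatial arc — by a short chaining argument with time-preserving paths, using that a path in $\mathcal{M}_{t_0}$ can be pushed slightly into the past by the flow once one point of it can — is the one genuinely non-routine step; everything else is a standard compactness-and-partition argument.
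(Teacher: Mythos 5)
Your approach — cover $\Gamma$ by small balls via compactness of the parameter interval, apply the local Harnack estimate (Lemma~\ref{lem harnack}) on each ball to the corresponding sub-arc, and sum the resulting length bounds by additivity — is precisely the one the paper uses. The extra step you supply, that $\mathfrak{h}$ is constant along $\Gamma$ (or at least that $\mathfrak{h}(\gamma(\sigma_i)) \leq \mathfrak{h}(x_0)$ for every ball centre, so the constant is uniform), is a correct and genuinely needed observation that the paper glosses over: indeed the paper never even introduces $x_0$ in the statement or proof of this global lemma, and implicitly relies on exactly the tilting-a-spatial-path-slightly-into-the-past argument you sketch.
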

\begin{proof}
We aim to apply the Harnack estimate to small balls covering $\Gamma$. Let $\Gamma$ be the image of the smooth map $\gamma:J \rightarrow B$. For each $s \in J$, choose $r_s>0$ sufficiently small such that the ball $B_s := B(\gamma(s),r_s)$ satisfies the hypothesis of Lemma~\ref{lem harnack}. Consider the open cover $\gamma^{-1}(B_s)$ of $J$. As $\mathbb{R}$ is locally compact, write $J$ as a union of compact intervals $K_i$ for $i \in \mathbb{N}$, overlapping only at their endpoints. Applying the Lebesgue number lemma to each $K_i$, we can find a finite number of compact intervals $J_{i,l}$ such that $K_i = \cup_l J_{i,l}$, with the collection $J_{i,l}$ overlapping only at their endpoints, and with the additional property that $J_{i,l} \subseteq \gamma^{-1}(B_{s_{i,l}})$, for some $s_{i,l} \in J$. Then, applying Lemma~\ref{lem harnack} to each of the balls $B_{s_{i,l}}$, we conclude that
\begin{align*}
  \ell_g(\Gamma(t_1)) &= \sum\limits_{i,l} \ell_g(\Gamma(t_1) \cap [\gamma(J_{i,l})](t_1))\\
  &\leq \sqrt{\frac{t_1 - \mathfrak{h}(x_0)}{t_0 - \mathfrak{h}(x_0)}} \sum\limits_{i,l} \ell_g(\Gamma \cap \gamma(J_{i,j}))\\
  &= \sqrt{\frac{t_1 - \mathfrak{h}(x_0)}{t_0 - \mathfrak{h}(x_0)}} \cdot \ell_g(\Gamma). \qedhere
\end{align*}
\end{proof}

\subsection{Vanishing times are locally constant}
Let $\Gamma \Subset \mathcal{M}_s$ be a compact smooth arc with the spatial slice at time $s \in I$. Recall, in Definition~\ref{defn worldlines}, we defined the interval $I_{\Gamma} = \cap_{x \in \Gamma} I_x$. Since $\Gamma$ is compactly contained in $\mathcal{M}_s$, $s$ is in the interior of $I_{\Gamma}$ by Lemma~\ref{lem cpt int}. Let $T_{\Gamma} = \sup I_{\Gamma}>s$, be the vanishing time of the arc. The following lemma shows that along a compact smooth arc $\Gamma$, the vanishing times of all of the points within the arc are the same.

\begin{lem}\label{lem empty}
Let $(\mathcal{M}^{2+1} , \mathfrak{t} , \partial_t , g)$ be a complete Ricci flow spacetime with $I = (0,T)$. Fix $s \in I$ and let $\Gamma \Subset \mathcal{M}_{s}$ be a compact smooth arc. Then
\begin{equation*}
    T_x = T_\Gamma, \quad \forall x \in \Gamma.
\end{equation*}
\end{lem}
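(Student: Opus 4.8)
The plan is to show that the set of points in $\Gamma$ whose worldline persists all the way to time $T_\Gamma$ is both open and closed in $\Gamma$; since $\Gamma$ is connected and this set is nonempty (it contains points whose vanishing time realizes the infimum, or rather, we argue it is all of $\Gamma$), we conclude $T_x \geq T_\Gamma$ for all $x \in \Gamma$, and the reverse inequality $T_x \leq T_\Gamma$ is immediate from the definition $I_\Gamma = \bigcap_{x \in \Gamma} I_x$. So the real content is the lower bound $T_x \geq T_\Gamma$ for every $x \in \Gamma$, equivalently: no point of $\Gamma$ can vanish strictly before $T_\Gamma$. Suppose for contradiction that some $x_0 \in \Gamma$ has $T_{x_0} < T_\Gamma$.

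The heart of the argument is a \emph{distance control} coming from the simple Harnack estimate. Pick a point $y_0 \in \Gamma$ with $T_{y_0} > T_{x_0}$ (such a point exists, else $T_\Gamma \le T_{x_0}$), and consider the sub-arc $\Gamma' \subseteq \Gamma$ joining $x_0$ to $y_0$; since $\Gamma \Subset \mathcal{M}_s$, we have $[s, T_{x_0}) \subseteq I_{\Gamma'}$, and in fact on any compact $[s, t_1] \subseteq [s, T_\Gamma)$ the flowed arc $\Gamma'(t_1)$ is a well-defined compact arc for $t_1 < T_{x_0}$. By the length version of the Harnack estimate (the last Lemma in the excerpt, applied with $t_0 = s$), for every $t \in [s, T_{x_0})$,
\begin{equation*}
  \ell_g\big(\Gamma'(t)\big) \leq \sqrt{\frac{t - \mathfrak{h}(x_0(s))}{s - \mathfrak{h}(x_0(s))}} \cdot \ell_g(\Gamma') \leq \sqrt{\frac{T_\Gamma - \mathfrak{h}(x_0(s))}{s - \mathfrak{h}(x_0(s))}} \cdot \ell_g(\Gamma') =: L < \infty,
\end{equation*}
a \emph{uniform} bound independent of $t$ (note $\mathfrak{h}(x_0(s)) < s$ since $s$ lies in the interior of $I_{x_0}$). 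Hence $d_{g(t)}\big(x_0(t), y_0(t)\big) \leq L$ for all $t \in [s, T_{x_0})$. Now use completeness: the closed ball $\overline{B}_{g(t)}\big(y_0(t), L+1\big)$ is compact in $\mathcal{M}_t$, and $x_0(t)$ stays inside it for all $t < T_{x_0}$. The standard ODE escape-time characterization — a maximal integral curve of $\partial_t$ that fails to extend past a finite time $T_{x_0} < T$ must eventually leave every compact subset of $\mathcal{M}$ — then yields a contradiction, provided we can upgrade the spatial compactness just obtained to genuine compactness in the spacetime $\mathcal{M}$ near time $T_{x_0}$.

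That upgrade is the step I expect to be the main obstacle. Spatial boundedness of the worldline $t \mapsto x_0(t)$ does not by itself prevent escape, because the \emph{metrics} $g(t)$ could degenerate as $t \uparrow T_{x_0}$; one must show the flowed ball $\overline{B}_{g(s)}(x_0(s), R)$ for suitable $R$ remains unscathed on $[s, T_{x_0}]$, i.e. that $T_{x_0}$ is interior to $I_K$ for a compact $K \ni x_0(s)$. To get this, I would combine the uniform length bound above with a two-sided metric comparison on the relevant parabolic region: the Harnack estimate gives the upper length bound, while Corollary~\ref{Chen 2.3} (scalar lower bound $R \ge -n/(2(t-\mathfrak{h}))$) together with the LFDE $\partial_t u = \Delta \log u$ — more precisely the resulting two-sided bound on $\partial_t \log u$ once one also has a \emph{local upper} scalar bound, which follows by a Bernstein/maximum-principle argument on the isothermal chart using that $\overline{B}_{g(s)}(x_0(s),R)$ is for the moment still unscathed — controls the conformal factor $u$ from above and below on a fixed coordinate neighborhood, uniformly up to $t = T_{x_0}$. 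A uniform two-sided bound on $u$ on a fixed chart means the metrics $g(t)$ are uniformly equivalent there, so the worldlines cannot escape, the parabolic cylinder extends past $T_{x_0}$, and $x_0(t)$ converges to a point of $\mathcal{M}_{T_{x_0}}$ — contradicting maximality of $I_{x_0}$. This shows $T_x \geq T_\Gamma$, completing the proof once combined with the trivial reverse inequality.
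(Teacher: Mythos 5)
There is a genuine gap, on two fronts.

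\textbf{First, you have the trivial and nontrivial inequalities reversed.} From $I_\Gamma = \bigcap_{x \in \Gamma} I_x$ one gets $I_\Gamma \subseteq I_x$, hence $T_\Gamma = \sup I_\Gamma \leq \sup I_x = T_x$ for every $x \in \Gamma$. So $T_x \geq T_\Gamma$ is the immediate inequality, and the hypothesis you set out to contradict, ``$T_{x_0} < T_\Gamma$ for some $x_0 \in \Gamma$,'' is vacuously impossible. The actual content of the lemma is the opposite direction, $T_x \leq T_\Gamma$: one must show that \emph{no} point of $\Gamma$ persists strictly past $T_\Gamma$. Your argument is salvageable only after a reinterpretation — namely, assume $T_{x_0} < T_{y_0}$ for some pair and aim to contradict this — but as written the logical frame is inverted.

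\textbf{Second, and more substantively, the step you flag as ``the main obstacle'' is left unresolved and your proposed patch is circular.} You correctly observe that bounding $d_{g(t)}(x_0(t), y_0(t)) \leq L$ for each $t < T_{x_0}$ gives compactness only inside each individual slice $\mathcal{M}_t$, which is not enough for the ODE escape-time criterion (that requires the worldline to stay in a fixed compact subset of the spacetime $\mathcal{M}$). To upgrade this you invoke a two-sided bound on the conformal factor via a Bernstein-type local upper bound on $R$, \emph{using the assumption that $\overline{B}_{g(s)}(x_0(s),R)$ is unscathed} — but that unscathedness is exactly what you are trying to establish. The argument eats its own tail.

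The paper avoids this obstruction entirely by never trying to flow the problematic point forward in time. Instead it works at the \emph{fixed} final time $T_\Gamma$: parametrize $\Gamma$ by $\gamma : [0,1] \to \mathcal{M}_s$, and let $J'' \subseteq [0,1]$ be a nonempty connected component of the (open) set of parameters whose worldline reaches time $T_\Gamma$. To show $\sup J'' \in J''$, take $s_j \nearrow \sup J''$; the Harnack length estimate shows $[\gamma(s_j)](T_\Gamma)$ is Cauchy in $(\mathcal{M}_{T_\Gamma}, g(T_\Gamma))$, which is complete, so it converges to some $z \in \mathcal{M}_{T_\Gamma}$. A small unscathed parabolic cylinder $C(z,r)$ then gives, for some $\tau$ slightly below $T_\Gamma$, that $[\gamma(s_j)](\tau) \to z(\tau)$; but the same Harnack control forces $[\gamma(s_j)](\tau) \to [\gamma(\sup J'')](\tau)$, so $z = [\gamma(\sup J'')](T_\Gamma)$ and $\sup J'' \in J''$. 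Connectedness then forces $J'' = [0,1]$, whence $\Gamma(T_\Gamma) \Subset \mathcal{M}_{T_\Gamma}$ can be flowed a bit further — contradicting $T_\Gamma = \sup I_\Gamma$. The decisive trick is to identify the forward limit inside the already-known-to-be-complete slice $\mathcal{M}_{T_\Gamma}$ and flow it \emph{backwards} through a short parabolic cylinder, which sidesteps exactly the two-sided metric control you found yourself needing.
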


\begin{proof}
We can assume that $T_\Gamma < T$, otherwise we would have $T_x = T_\Gamma = T$, for any $x \in \Gamma$. Let $\Gamma$ be the image of the smooth map $\gamma:[0,1] \rightarrow \mathcal{M}_s$. Consider those points in $\Gamma$ which persist past the vanishing time of $\Gamma$, $[\Gamma(T_\Gamma)](s) \subset \Gamma$. As in Lemma~\ref{lem harnack}, we look at the preimage of this subset under our parameterisation
\begin{equation*}
    J' := \gamma^{-1}([\Gamma(T_\Gamma)](s)) \subseteq [0,1].
\end{equation*}
We note that $J'$ is open in $[0,1]$. If $J'$ is non-empty, choose $J'' \subseteq J'$ to be a non-empty connected component. We use Lemma~\ref{lem harnack} to show that $J''$ contains its infimum and supremum.
\begin{claim}
$\inf J'', \sup J'' \in J''$.
\end{claim}
\begin{proof}[Proof of claim]
Let $s := \sup J''$. Choose an increasing sequence $s_j \in J''$ such that $s_j \nearrow s$, and hence $\gamma(s_j) \rightarrow \gamma(s)$ in $\mathcal{M}_s$, as $j \rightarrow \infty$. Since the metric is smooth,
\begin{equation}\label{eqn cauchy}
    \ell_g( \gamma([s_j,s])) \rightarrow 0, \quad \textnormal{as } j \rightarrow \infty.
\end{equation}
In particular, by the Harnack estimate (\ref{weak harnack}), the sequence $[\gamma(s_j)](T_\Gamma)$ is Cauchy in $\mathcal{M}_{T_\Gamma}$. As $g(T_\Gamma)$ is complete, there exists a limit $z \in \mathcal{M}_{T_\Gamma}$. Choose $r>0$ such that the parabolic cylinder $C(z,r)$ is unscathed. In particular, since $g$ is smooth, for some $\tau \in (T_\Gamma - r^2 , T_\Gamma) \cap (s,T_\Gamma)$, we deduce that $[\gamma(s_j)](\tau) \rightarrow z(\tau)$ as $j \rightarrow \infty$. Finally, applying the Harnack estimate (\ref{weak harnack}) again to equation (\ref{eqn cauchy}), we see that $[\gamma(s_j)](\tau) \rightarrow [\gamma(s)](\tau)$ as $j \rightarrow \infty$. So $[\gamma(s)](\tau) = z(\tau)$, or $z = [\gamma(s)](T_\Gamma)$, which implies $s \in J''$. The argument for $\inf J''$ is the same.
\end{proof}
By the above claim, $J''$ must be the entire interval $[0,1]$, and $[\Gamma(T_\Gamma)](s) = \Gamma$. This gives a contradiction to the value of $T_\Gamma$ as $\Gamma(T_\Gamma) \Subset \mathcal{M}_{T_\Gamma}$. Therefore $J'$ is empty, and all points in $\Gamma$ vanish at time $T_\Gamma$.
\end{proof}

\begin{cor}\label{cor dichotomy}
Let $(\mathcal{M}^{2+1} , \mathfrak{t} , \partial_t , g)$ be a complete Ricci flow spacetime with $I = (0,T)$. Suppose $U \subseteq \mathcal{M}_{s}$ is connected. Then $T_x = T_y$, for all $x,y \in U$.
\end{cor}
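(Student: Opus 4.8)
The plan is to promote Lemma~\ref{lem empty}, which fixes the vanishing time along a single compact smooth arc, to the assertion that the map $x \mapsto T_x$ is locally constant on $U$; connectedness of $U$ then finishes the proof. (Recall that $T_x$ is well-defined for every $x\in\mathcal{M}_s$, since $I_x$ is an open subinterval of $(0,T)$ containing $s$, so $T_x\in(s,T]$.)

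First I would fix $x \in U$ and, using that $\mathcal{M}_s$ is a smooth manifold, choose a coordinate ball $B\ni x$ in $\mathcal{M}_s$ whose closure $\overline{B}$ is compact and contained in $\mathcal{M}_s$. For $y \in B$ with $y\neq x$, parameterise the coordinate straight-line segment from $x$ to $y$ by a smooth regular map $\gamma\colon[0,1]\to B$; its image $\Gamma_{xy}$ is then a compact smooth arc with $\Gamma_{xy}\subseteq\overline{B}\Subset\mathcal{M}_s$, so Lemma~\ref{lem empty} applies and gives $T_x = T_{\Gamma_{xy}} = T_y$ (the case $y=x$ being trivial). Thus $T$ is constant on the relatively open neighbourhood $B\cap U$ of $x$ in $U$. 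Since $x$ was arbitrary, $x\mapsto T_x$ is locally constant on $U$, hence constant because $U$ is connected; that is, $T_x = T_y$ for all $x,y\in U$.

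I do not expect a genuine obstacle here: the analytic content all sits in Lemma~\ref{lem empty}, and thence in the Harnack estimate behind it. The only points needing care are ensuring that the curve joining two nearby points is an honest compact smooth arc that is compactly contained in $\mathcal{M}_s$ — which is why we stay inside a coordinate ball with compact closure — and noting that local constancy alone, rather than path-connectedness of $U$, already suffices to conclude.
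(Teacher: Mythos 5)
Your proof is correct and is essentially the argument the paper leaves implicit: Lemma~\ref{lem empty} says $T_\cdot$ is constant on any compact smooth arc, and you promote this to local constancy on $U$ (via coordinate straight-line segments inside a chart ball with compact closure), then invoke connectedness. The care taken to stay inside a convex coordinate ball so that the segment is genuinely a compact smooth arc compactly contained in $\mathcal{M}_s$, and the separate handling of $y=x$, are exactly the right points to check.
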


\subsection{Spatially-connected spacetimes are expanding}
\begin{defn}
Let  $(\mathcal{M} , \mathfrak{t} , \partial_t , g)$ be a connected Ricci flow spacetime. We say that $\mathcal{M}$ is spatially-connected if the time slices $\mathcal{M}_t$ are connected, for all $t \in I$.
\end{defn}

Under the additional assumption that $\mathcal{M}$ is spatially-connected, we have a well-defined vanishing time for each spatial slice. If our spacetime was not expanding, then $\mathcal{M}_s(t) = \emptyset$ for some $s < t \in I$. We can now use this to contradict our assumption that our spacetime is connected.

\begin{thm}[Complete, spatially-connected spacetimes are expanding]\label{thm sc expanding}
Let $(\mathcal{M}^{2+1} , \mathfrak{t} , \partial_t , g)$ be a complete, spatially-connected Ricci flow spacetime with $I = (0,T)$. Then it is expanding. That is, the vanishing times
\begin{equation*}
    T_x = T, \quad \forall x \in \mathcal{M}.
\end{equation*}
\end{thm}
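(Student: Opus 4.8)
The plan is to argue by contradiction using Corollary~\ref{cor dichotomy}, which tells us that on a connected subset of a spatial slice the vanishing time is constant. Since $\mathcal{M}$ is spatially-connected, each slice $\mathcal{M}_t$ is connected, so Corollary~\ref{cor dichotomy} gives a well-defined vanishing time $T(t) := T_x$ for every $x \in \mathcal{M}_t$, independent of the choice of $x \in \mathcal{M}_t$. Suppose for contradiction that $\mathcal{M}$ is not expanding, so there exists $s \in (0,T)$ with $T(s) < T$. I would set $\tau := T(s) < T$ and consider the "past" and "future" parts of $\mathcal{M}$ relative to $\tau$.

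The key claim is that no worldline crosses the level $\mathfrak{t} = \tau$: indeed, if $x \in \mathcal{M}_s$ then its worldline stops at $\tau = T(s)$, so $\tau \notin I_x$ in the sense of passing through; conversely I would want to show that no point of $\mathcal{M}_{t'}$ for $t' > \tau$ flows back past $\tau$. To get this, note that if some $y \in \mathcal{M}_{t'}$ with $t' > \tau$ had $\tau \in I_y^\circ$ (interior), then $y(\tau') \in \mathcal{M}_{\tau'}$ for $\tau'$ slightly less than $\tau$ but close; then by the constancy of vanishing times on $\mathcal{M}_{\tau'}$ (which is connected), \emph{every} point of $\mathcal{M}_{\tau'}$ would have vanishing time $> \tau$, contradicting $T(\tau') = T(s) = \tau$ for $\tau'$ in the connected range $(\mathfrak{h}, \ldots)$ — here I need that $T(\cdot)$ is at least locally constant in $t$, or rather that $T(\tau') \le \tau$. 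This is where I expect the main subtlety: pinning down exactly how $T(t)$ behaves as a function of $t$, in particular that if $T(s) = \tau$ then $T(t) = \tau$ for all $t \in (\mathfrak{h}(\cdot), \tau)$, i.e. that a slice vanishing at time $\tau$ forces all earlier slices to vanish at $\tau$ too. The point is that $\mathcal{M}_s(t)$ and $\mathcal{M}_t(s)$ are diffeomorphic via the flow for $s, t$ on the same worldlines, so if $\mathcal{M}_s$ entirely flows up to time approaching $\tau$ and then vanishes, slices just below $\tau$ are exactly the flowed-forward images and hence also vanish at $\tau$; going backward, $\mathcal{M}_{s'}$ for $s' < s$ flows forward onto (an open subset of) $\mathcal{M}_s$, and by connectedness the vanishing time is globally $\tau$ on each such slice as well. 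Thus $T(t) = \tau$ for all $t \in (0,\tau)$.

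With this in hand, define $\mathcal{M}^- := \mathfrak{t}^{-1}((0,\tau))$ and $\mathcal{M}^+ := \mathfrak{t}^{-1}((\tau, T))$, both open in $\mathcal{M}$ by continuity of $\mathfrak{t}$, and both non-empty (the latter because $\mathcal{M}_t \neq \emptyset$ for all $t \in I$ by the definition of a Ricci flow spacetime). The slice $\mathcal{M}_\tau$ is also non-empty. I claim $\mathcal{M} = \mathcal{M}^- \sqcup \mathcal{M}^+ \sqcup \mathcal{M}_\tau$ with $\mathcal{M}_\tau$ contributing nothing to connectedness: no worldline through a point of $\mathcal{M}_\tau$ extends below $\tau$ (by the previous paragraph, points below $\tau$ all vanish at $\tau$, so none flows \emph{to} $\mathcal{M}_\tau$) — wait, rather I should argue that $\mathcal{M}^- \cup \mathcal{M}_\tau$ is open: a point $z \in \mathcal{M}_\tau$ has an unscathed parabolic cylinder $C(z,r)$, but since nothing below $\tau$ survives to $\tau$, actually $C(z,r) \cap \mathcal{M}^- = \emptyset$, forcing $C(z,r) \subseteq \mathcal{M}^+ \cup \mathcal{M}_\tau$; hence $\mathcal{M}^- $ is both open and closed in $\mathcal{M}$. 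Since $\mathcal{M}^-$ is non-empty and $\mathcal{M}^+ \neq \emptyset$ is disjoint from it, this contradicts the connectedness of $\mathcal{M}$. Therefore $T(s) = T$ for every $s \in (0,T)$, i.e. $T_x = T$ for all $x \in \mathcal{M}$. The main obstacle, as flagged, is the careful bookkeeping in the second paragraph showing that a single slice vanishing at $\tau$ propagates to all earlier slices vanishing at $\tau$; everything else is a clean open-closed connectedness argument once the worldline structure across the level $\mathfrak{t} = \tau$ is understood. I would lean on Lemma~\ref{lem empty} and Corollary~\ref{cor dichotomy} together with the diffeomorphism $\mathcal{M}_t(s) \to \mathcal{M}_s(t)$ from the preliminaries to make this rigorous.
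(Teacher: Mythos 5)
Your proposal is correct in outline and uses the same ingredients as the paper — Corollary~\ref{cor dichotomy}, the forward propagation of the slice vanishing time (if $T_s=\tau<T$ then $\mathcal{M}_s(t)$ is a non-empty open subset of $\mathcal{M}_t$ with vanishing time $\tau$, so $T_t=\tau$ for all $t\in(s,\tau)$), and unscathed parabolic cylinders. But it takes a longer route, and there is one place worth being very precise about. The step where you claim $C(z,r)\cap\mathcal{M}^-=\emptyset$ for $z\in\mathcal{M}_\tau$ is exactly where the contradiction already sits, and the paper exploits this directly. By construction, an unscathed $C(z,r)$ contains $z(\tau-\delta)$ for small $\delta>0$, which lies in $\mathcal{M}^-$; so $C(z,r)\cap\mathcal{M}^-$ is in fact non-empty. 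Of course you can still ``derive'' $C(z,r)\cap\mathcal{M}^-=\emptyset$ from forward propagation — it just means the standing hypothesis $T_s<T$ is already inconsistent at this point, so any subsequent deduction is vacuous. The paper stops right here: take any $x\in\mathcal{M}_{T_s}$, pick $\delta>0$ small so that $T_s\pm\delta\in I_x$ (from the unscathed cylinder), and read off
\[
T_s+\delta \;\le\; T_x \;=\; T_{x(T_s-\delta)} \;=\; T_{T_s-\delta} \;=\; T_s,
\]
a one-line contradiction. Everything after that in your write-up — the open/closed decomposition of $\mathcal{M}$ at level $\mathfrak{t}=\tau$ — is a valid way to finish from the same facts, but it repackages a contradiction you already have rather than finding a new one.

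On the subtlety you yourself flag, namely propagating $T(t)=\tau$ backward to all $t<s$: you don't need it. For the disconnection argument (or the direct contradiction) you only ever test times $t\in(\tau-r^2,\tau)$ with $r$ small, and by shrinking $r$ you may assume $\tau-r^2>s$, so forward propagation from the single slice $\mathcal{M}_s$ already covers every time you use. The worry that $\mathcal{M}_{s'}$ for $s'<s$ might not survive up to time $s$ is therefore moot, and the argument need not address it at all. If you do want backward constancy, the cleanest way to get it is to note that the paper's contradiction applies to the level $T_{s'}$ just as well as to $T_s$, but this again shows that the direct argument is the right level of generality.
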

\begin{proof}
By Corollary \ref{cor dichotomy}, each spatial slice $\mathcal{M}_s$ has extinction time $T_s \in (s,T]$ for each $s \in (0,T)$. That is, $T_x = T_s$ for all $x \in \mathcal{M}_s$. If $T_s <T$ for some $s \in (0,T)$, then $T_t = T_s$ for all $t \in (s,T_s)$. Choosing any point $x \in \mathcal{M}_{T_s}$, we can pick $\delta>0$ sufficiently small so that $T_s \pm \delta \in I_x$. This leads to the obvious contradiction
\begin{equation*}
    T_s + \delta \leq T_x = T_{x(T_s-\delta)} = T_s. \qedhere
\end{equation*}
\end{proof}

\subsection{Decomposing a spacetime}

Finally, we show that any complete Ricci flow spacetime can be decomposed into a collection of complete and spatially-connected Ricci flow spacetimes.

\begin{defn}
Let $(\mathcal{M}^{n+1},\mathfrak{t},\partial_t,g)$ be a Ricci flow spacetime and let $\eta: J \rightarrow \mathcal{M}$ be a time-preserving path. Define $\mathcal{M}^{\eta} \subseteq \mathcal{M}$ by setting $\mathcal{M}^{\eta}_t$ to be the connected component of $\mathcal{M}_t$ containing $\eta(t)$, for each $t \in J$.
\end{defn}

\begin{lem}\label{lem timepath sc}
Let $(\mathcal{M}^{n+1},\mathfrak{t},\partial_t,g)$ be a complete Ricci flow spacetime with $I=(0,T)$. Fix $(t_0,t_1) \subseteq (0,T)$. If $\eta:(t_0,t_1) \rightarrow \mathcal{M}$ a time-preserving path, then the restriction of $\mathfrak{t}$, $\partial_t$ and $g$ to $\mathcal{M}^{\eta}$ is a complete and spatially-connected Ricci flow spacetime.
\end{lem}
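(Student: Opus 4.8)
The plan is to verify each bullet point in the definition of a Ricci flow spacetime for the tuple $(\mathcal{M}^{\eta}, \mathfrak{t}|_{\mathcal{M}^{\eta}}, \partial_t|_{\mathcal{M}^{\eta}}, g|_{\mathcal{M}^{\eta}})$, and then check completeness and spatial-connectedness. The spatial-connectedness is essentially built into the definition of $\mathcal{M}^{\eta}$, since each slice $\mathcal{M}^{\eta}_t$ is by construction the connected component of $\mathcal{M}_t$ through $\eta(t)$, hence connected and non-empty. The serious content is showing that $\mathcal{M}^{\eta}$ is an \emph{open} subset of $\mathcal{M}$ (so that it is a smooth $(n+1)$-manifold on which the restrictions of $\mathfrak{t}$, $\partial_t$ and $g$ make sense and inherit all the required structural properties), that it is \emph{connected}, and that $\mathfrak{t}|_{\mathcal{M}^{\eta}}$ is still a submersion onto $(t_0,t_1)$.

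First I would show $\mathcal{M}^{\eta}$ is open. Fix $x \in \mathcal{M}^{\eta}$, say $\mathfrak{t}(x) = t \in (t_0, t_1)$. Pick a small unscathed parabolic cylinder $C(x,r)$, which is open in $\mathcal{M}$; I claim that, after shrinking $r$, it lies in $\mathcal{M}^{\eta}$. The base $B(x,r)$ is connected and contained in $\mathcal{M}_t^{\eta}$, so its worldlines stay in the component of $\mathcal{M}^{\eta}$ — more precisely, for each $t' \in (t - r^2, t + r^2)$ the set $B(x,r)(t')$ is connected (diffeomorphic image of a connected set) and contains $x(t')$, which lies in the same component of $\mathcal{M}_{t'}$ as $\eta(t')$ provided the worldline of $x$ from time $t$ to time $t'$ stays in $\mathcal{M}^{\eta}$; this last fact is exactly Corollary~\ref{cor dichotomy}-type reasoning together with the observation that $x$ and $\eta(t)$ lie in the same component of $\mathcal{M}_t$, so their worldlines lie in the same component of each $\mathcal{M}_{t'}$ for $t'$ in the common interval of existence (the union of components over $t'$ swept out by a worldline is continuous, and connected components of slices vary "continuously" along worldlines because the flow of $\partial_t$ is by diffeomorphisms on open sets). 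Hence $C(x,r) \subseteq \mathcal{M}^{\eta}$ for $r$ small, giving openness. The same local-diffeomorphism picture shows $\mathfrak{t}|_{\mathcal{M}^{\eta}}$ remains a submersion and $\partial_t$ restricts to a vector field on $\mathcal{M}^{\eta}$ with $\partial_t(\mathfrak{t}) \equiv 1$; the Ricci flow spacetime equation and the Riemannian property of $g(t)$ on each slice are inherited verbatim from $\mathcal{M}$.

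Next, connectedness of $\mathcal{M}^{\eta}$: any $x \in \mathcal{M}^{\eta}_t$ can be joined within $\mathcal{M}^{\eta}_t$ to $\eta(t)$ by a path (since the slice is connected), and then $\eta$ itself, being a time-preserving path with image in $\mathcal{M}^{\eta}$, connects $\eta(t)$ to any other $\eta(t')$; concatenating shows $\mathcal{M}^{\eta}$ is path-connected. Finally, completeness: for each fixed $t \in (t_0,t_1)$, $\mathcal{M}^{\eta}_t$ is a connected component of the complete Riemannian manifold $(\mathcal{M}_t, g(t))$, and a connected component of a complete manifold is complete (it is open, closed, and geodesically convex in the sense that geodesics cannot leave it), so $g(t)|_{\mathcal{M}^{\eta}_t}$ is complete. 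This gives all five structural axioms plus completeness and spatial-connectedness.

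The main obstacle is the openness argument, specifically pinning down the claim that a worldline through $x$ stays inside $\mathcal{M}^{\eta}$ on its whole interval of existence within $(t_0,t_1)$ — i.e.\ that "being in the $\eta$-component" is preserved along worldlines. The clean way to see this is: the function $t' \mapsto (\text{the component of }\mathcal{M}_{t'}\text{ containing }x(t'))$ and $t' \mapsto (\text{the component containing }\eta(t'))$ are both locally constant in the appropriate sense because the flow of $\partial_t$ carries a neighbourhood of one worldline diffeomorphically to a neighbourhood of its image, so it maps the local connected picture to the local connected picture; since the two components agree at $t$ (by hypothesis $x \in \mathcal{M}^{\eta}_t$), they agree on the connected set of times where both worldlines exist. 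Making this rigorous is a short open-and-closed argument on the time interval, and once it is in hand every remaining point is routine inheritance of structure.
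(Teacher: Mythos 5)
Your overall structure matches the paper's: openness of $\mathcal{M}^{\eta}$ is the crux, after which the structural axioms of a Ricci flow spacetime, spatial connectedness, connectedness, and completeness (via closedness of $\mathcal{M}^\eta_t$ in the complete slice $\mathcal{M}_t$) are all routine. You also correctly isolate the hard point: showing that the worldline of $x$ remains in the same component as $\eta$ for nearby times.

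However, the sketch you give for that step --- ``the flow of $\partial_t$ carries a neighbourhood of one worldline diffeomorphically to a neighbourhood of its image'' --- glosses over the precise obstruction, and that obstruction is exactly where the work is. The issue is that $\eta$ is a \emph{time-preserving path}, not a worldline: $\eta(t')$ is generally a different point from $\eta(t)(t')$, the worldline of $\eta(t)$ evaluated at $t'$. So knowing that the flow carries the component of $x$ to the component of $\eta(t)(t')$ does not, by itself, connect you to $\eta(t')$. You need to bring a compact segment of $\eta$ itself into the picture and compare $\eta(t')$ with $\eta(t)$ via the flow. The paper does this directly and avoids any open-and-closed argument: it fixes a slice time $s$, takes the compact connected set
$K := [\eta((s-\delta,s+\delta))](s) \cup B \cup \rho([0,1]) \Subset \mathcal{M}^\eta_s$,
where $B$ is a ball around $x$, $\rho$ is a path from $x$ to $\eta(s)$ in $\mathcal{M}^\eta_s$, and $[\eta((s-\delta,s+\delta))](s)$ is the segment of $\eta$ flowed back to time $s$ (well-defined for small $\delta$ by compactness of $\eta$ and the short-time unscathedness of parabolic cylinders), and then invokes compactness again to flow $K$ unscathed for a short time. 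The unscathed image of $K$ at each nearby time $t'$ is connected and contains both $B(t')$ and $\eta(t')$, which directly yields $B((s-\delta,s+\delta)) \subseteq \mathcal{M}^\eta$ and hence openness. Your open-and-closed framing would work if the ``local constancy'' were proved by this same mechanism, but as written it does not identify the need to drag a flowed segment of $\eta$ along, and that is the substantive missing idea rather than a minor technicality.
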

\begin{proof}
In order to show that $(\mathcal{M}^{\eta},\mathfrak{t},\partial_t,g)$ is a Ricci flow spacetime, it suffices to prove that $\mathcal{M}^{\eta}$ is an open subset of $\mathcal{M}$. Fix $s \in (t_0,t_1)$ and $x \in \mathcal{M}^{\eta}_{s}$. Since $g(s)$ is complete, $B := B(x,1) \Subset \mathcal{M}^{\eta}_{s}$. Since $\mathcal{M}^{\eta}_{s}$ is path-connected, there exists a continuous path $\rho:[0,1] \rightarrow \mathcal{M}^{\eta}_{s}$ from $x=\rho(0)$ to $\eta(s)=\rho(1)$. Using the continuity of $\eta$, there exists $\delta>0$ such that $(s-\delta,s+\delta) \subseteq (t_0,t_1)$, with $t_0 \in I_{\eta(t)}$ for any $t \in (s-\delta,s+\delta)$. As such, let
\begin{equation*}
 K := [\eta((s-\delta,s+\delta))](s) \cup B \cup \rho([0,1]) \Subset \mathcal{M}^{\eta}_{s}.
\end{equation*}
By compactness, after possibly shrinking $\delta$, we can assume that the parabolic cylinder $K((s-\delta,s+\delta))$ is unscathed. In particular, the unscathed parabolic cylinder $B((s-\delta,s+\delta))$ lies within $\mathcal{M}^{\eta}$. This shows that $\mathcal{M}^{\eta}$ is open in $\mathcal{M}$. Finally, we note that $\mathcal{M}^{\eta}_t$ is a closed subset of the complete space $(\mathcal{M}_t,g(t))$ for each $t \in (t_0,t_1)$, so the Ricci flow spacetime $(\mathcal{M}^{\eta},\mathfrak{t},\partial_t,g)$ is also complete.
\end{proof}
The following lemma shows that, if we have two time-preserving paths starting from the same point, then the corresponding spacetimes we construct from them agree.
\begin{lem}\label{lem sc agree}
Let $(\mathcal{M}^{2+1},\mathfrak{t},\partial_t,g)$ be a complete Ricci flow spacetime with $I = (0,T)$. Let $\eta'_i : [t_0,t_1) \rightarrow \mathcal{M}$, $i=1,2$, be two time-preserving paths such that $\eta'_1(t_0) = \eta'_2(t_0)$. Consider the restrictions on the interior $\eta_i := \eta'_i\vert_{(t_0,t_1)}$. Then $\mathcal{M}^{\eta_1} = \mathcal{M}^{\eta_2}$.
\end{lem}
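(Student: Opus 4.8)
The plan is to reduce the claim to a connectedness argument on the parameter interval $(t_0,t_1)$. Since each $\mathcal{M}^{\eta_i}_t$ is by definition a connected component of $\mathcal{M}_t$, the two sets $\mathcal{M}^{\eta_1}_t$ and $\mathcal{M}^{\eta_2}_t$ either coincide or are disjoint, and since $\mathcal{M}^{\eta_i}=\bigcup_{t\in(t_0,t_1)}\mathcal{M}^{\eta_i}_t$ it suffices to prove that
\begin{equation*}
    S:=\{\,t\in(t_0,t_1): \eta_1(t)\text{ and }\eta_2(t)\text{ lie in the same component of }\mathcal{M}_t\,\}
\end{equation*}
equals $(t_0,t_1)$. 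The one structural input I would record at the outset: by Lemma~\ref{lem timepath sc} the spacetimes $\mathcal{M}^{\eta_1}$ and $\mathcal{M}^{\eta_2}$ are complete, spatially-connected Ricci flow spacetimes with time interval $(t_0,t_1)$, so by Theorem~\ref{thm sc expanding} both are expanding; in particular every worldline in either of them persists \emph{forwards} up to $t_1$. Below I abbreviate ``lie in the same component of a given time slice'' to \emph{slice-connected}.

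Two of the three needed properties of $S$ are soft. That $S$ contains a right-neighbourhood of $t_0$: put $x_0:=\eta_1'(t_0)=\eta_2'(t_0)$, take a small unscathed parabolic cylinder $C(x_0,\rho)$ about it (open in $\mathcal{M}$, with connected slices $[B(x_0,\rho)](t)$), and note that by continuity of $\eta_i':[t_0,t_1)\to\mathcal{M}$ both $\eta_1(t)$ and $\eta_2(t)$ lie in $[B(x_0,\rho)](t)$ for $t$ slightly above $t_0$. That $S$ is open: given $t^*\in S$ with common component $C^*$, join $\eta_1(t^*)$ to $\eta_2(t^*)$ by a path $\sigma$ inside $C^*$ (components of a manifold are path-connected), flow its compact image through a short unscathed parabolic cylinder, and combine with small unscathed cylinders about $\eta_1(t^*)$ and $\eta_2(t^*)$ to obtain, for $t$ near $t^*$, a chain $\eta_1(t)\sim\eta_1(t^*)(t)\sim\eta_2(t^*)(t)\sim\eta_2(t)$ of slice-connected points, so that $t\in S$.

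The essential step is a one-sided closedness: if $t^*\in(t_0,t_1)$ and $(t_0,t^*)\subseteq S$, then $t^*\in S$. I would fix a small unscathed cylinder about each of $\eta_1(t^*)$, $\eta_2(t^*)$ and pick $t'\in(t_0,t^*)\cap S$ so close to $t^*$ that $\eta_i(t')$ lies in the $i$-th cylinder. As $t'\in S$, the points $\eta_1(t')$ and $\eta_2(t')$ lie in the common component $\mathcal{M}^{\eta_1}_{t'}=\mathcal{M}^{\eta_2}_{t'}$, hence are joined by a path $\sigma$ inside it. Now the expanding property enters: by Theorem~\ref{thm sc expanding} every point of $\sigma$ persists forwards to time $t^*$, so pushing $\sigma$ forward by the flow of $\partial_t$ gives a path in $\mathcal{M}_{t^*}$ from $\eta_1(t')(t^*)$ to $\eta_2(t')(t^*)$; and the two chosen cylinders slice-connect $\eta_i(t')(t^*)$ to $\eta_i(t^*)$. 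Concatenating shows $\eta_1(t^*)$ and $\eta_2(t^*)$ are slice-connected, i.e. $t^*\in S$. Finally, with $b:=\sup\{t:(t_0,t)\subseteq S\}$, the second paragraph gives $b>t_0$; were $b<t_1$ we would have $(t_0,b)\subseteq S$, hence $b\in S$ by the line above, and then openness of $S$ would push the interval past $b$, contradicting the definition of $b$. Therefore $b=t_1$, $S=(t_0,t_1)$, and $\mathcal{M}^{\eta_1}=\mathcal{M}^{\eta_2}$.

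The only genuinely non-formal point is the forward push of the connecting path $\sigma$ in the last step: a priori a time slice can shrink and swallow part of $\sigma$, and it is exactly the combination of Lemma~\ref{lem timepath sc} and Theorem~\ref{thm sc expanding} — completeness forcing the component spacetimes to be expanding — that rules this out.
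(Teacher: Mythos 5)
Your proof is correct, but it takes a genuinely different route from the paper's. The paper's proof is shorter and hinges on a single observation: tracking the worldline of the common starting point $x_0 := \eta_1'(t_0) = \eta_2'(t_0)$. Using a small unscathed cylinder $C(x_0,r)$, one sees that $x_0(s) \in \mathcal{M}^{\eta_i}_s$ for $s$ slightly above $t_0$; then, since each $\mathcal{M}^{\eta_i}$ is complete and spatially-connected (Lemma~\ref{lem timepath sc}), hence expanding (Theorem~\ref{thm sc expanding}), the worldline of $x_0$ persists inside $\mathcal{M}^{\eta_i}$ up to $t_1$. So for every $t$, both $\mathcal{M}^{\eta_1}_t$ and $\mathcal{M}^{\eta_2}_t$ are connected components of $\mathcal{M}_t$ containing $x_0(t)$, hence equal. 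Your argument instead runs an open-closed analysis on the set $S$ of times where $\eta_1(t)$ and $\eta_2(t)$ are slice-connected, with the expanding property entering in the left-closedness step when you flow the connecting path $\sigma$ forward. Both routes are sound and both rely on the same inputs, namely Lemma~\ref{lem timepath sc} and Theorem~\ref{thm sc expanding} together with the unscathed-cylinder observation; the difference is that the paper recognises that a single worldline witnesses the common component, which collapses the interval argument to a one-step push-forward, whereas your approach carries a path through each stage and therefore needs the three-part connectedness machinery. Your version is somewhat more robust in spirit (it does not privilege the shared initial point beyond the first step), but the paper's is materially shorter.
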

\begin{proof}
Let $x_0 := \eta'_1(t_0) = \eta'_2(t_0)$, and denote the domain of the worldline of $x_0$ within the spacetime $\mathcal{M}^{\eta_i}$ by $I^{\{i\}} \subseteq (t_0,t_1)$, for $i=1,2$.\par

Choose $r>0$ sufficiently small such that the parabolic cylinder $C(x_0,r)$ is unscathed. From this, we can conclude that $\eta_1(s)$ is path-connected to $x_0(s)$ in $\mathcal{M}_s$, for any $s \in (t_0,t_0+r^2)$. In particular, $x_0(s) \in \mathcal{M}^{\eta_1}_s$ for any $s \in (t_0,t_0+r^2)$. As $\mathcal{M}^{\eta_1}$ is complete and spatially-connected, by Theorem~\ref{thm sc expanding}, it is expanding. Thus, $I^{\{1\}} = (t_0,t_1)$.\par

Repeating the same argument with $i=2$, we have $I^{\{2\}} = (t_0,t_1)$ also. So, for any $t \in (t_0,t_1)$, $\mathcal{M}^{\eta_1}_t$ and $\mathcal{M}^{\eta_2}_t$ are connected components of $\mathcal{M}_t$, both containing $x_0(t)$. Hence they agree.
\end{proof}
Although our spacetime $\mathcal{M}$ is path-connected, it is not necessarily path-connected with time-preserving paths. Instead, the following corollary shows that  any two points in $\mathcal{M}$ can be connected by a finite number of concatenated time-preserving paths with alternating orientations.

\begin{restatable*}{cor}{tpc}\label{cor timepathcon} 
Let $(\mathcal{M}^{n+1},\mathfrak{t},\partial_t,g)$ be a Ricci flow spacetime. Suppose $x, x' \in \mathcal{M}$. Then there exists $m \in \mathbb{N}$, a collection of points $x_0, \ldots, x_m, y_1,\ldots,y_m \in \mathcal{M}$, and a collection of time-dependent paths $\eta_i:[\mathfrak{t}(x_{i-1}),\mathfrak{t}(y_i)] \rightarrow \mathcal{M}$, $\gamma_i:[\mathfrak{t}(x_i),\mathfrak{t}(y_i)] \rightarrow \mathcal{M}$ for $i = 1,\dots,m$, such that
\begin{itemize}
    \item $x_0 = x$, and $x_m = x'$.
    \item $\eta_i\circ\mathfrak{t}(x_{i-1}) = x_{i-1}$ and $\eta_i\circ\mathfrak{t}(y_i) = y_{i}$, for each $i \in \{1,\dots,m\}$.
    \item $\gamma_i\circ\mathfrak{t}(x_i) = x_{i}$ and $\gamma_i\circ\mathfrak{t}(y_i) = y_{i}$, for each $i \in \{1,\dots,m\}$.
\end{itemize}
\end{restatable*}

In light of Lemma~\ref{lem sc agree} and Theorem~\ref{thm sc expanding}, we can improve this corollary for complete spacetimes.

\begin{lem}\label{lem m=1}
Let $(\mathcal{M}^{2+1},\mathfrak{t},\partial_t,g)$ be a complete Ricci flow spacetime with $I = (0,T)$. Suppose $x,x' \in \mathcal{M}$. Then there exists $y \in \mathcal{M}$ and time-dependent paths $\eta:[\mathfrak{t}(x),\mathfrak{t}(y)]\rightarrow \mathcal{M}$, $\gamma:[\mathfrak{t}(x'),\mathfrak{t}(y)] \rightarrow \mathcal{M}$ such that $\eta(\mathfrak{t}(x))=x$, $\gamma(\mathfrak{t}(x'))=x'$, and $\eta(\mathfrak{t}(y))=\gamma(\mathfrak{t}(y))=y$. That is, in the language of \ref{cor timepathcon}, we can always choose $m=1$.
\end{lem}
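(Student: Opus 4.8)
The plan is to reduce the statement to the \emph{transitivity} of a single relation, and then establish that transitivity using Lemma~\ref{lem sc agree} and Theorem~\ref{thm sc expanding}. Call $x,x'\in\mathcal{M}$ \emph{compatible} if there is a $y\in\mathcal{M}$ together with time-preserving paths $[\mathfrak{t}(x),\mathfrak{t}(y)]\to\mathcal{M}$ from $x$ to $y$ and $[\mathfrak{t}(x'),\mathfrak{t}(y)]\to\mathcal{M}$ from $x'$ to $y$; this is precisely the conclusion we are after. Compatibility is plainly reflexive and symmetric. Granting transitivity, Corollary~\ref{cor timepathcon} finishes the proof: the zigzag it produces for $x,x'$ exhibits each consecutive pair $x_{i-1},x_i$ as compatible, since $\eta_i$ and $\gamma_i$ are time-preserving paths from $x_{i-1}$ and $x_i$ respectively into $y_i$; reflexivity and transitivity then give that $x=x_0$ and $x'=x_m$ are compatible.

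To prove transitivity I would first recast compatibility in terms of worldlines: $x$ and $x'$ are compatible if and only if there is a time $\tau\in I_x\cap I_{x'}$ with $\tau\geq\max\{\mathfrak{t}(x),\mathfrak{t}(x')\}$ such that $x(\tau)$ and $x'(\tau)$ lie in the same connected component of $\mathcal{M}_\tau$. The forward implication is the main geometric point. Given a time-preserving path from $x$ to some $y$, extend it slightly beyond time $\mathfrak{t}(y)$ to a time-preserving path over an open interval $(\mathfrak{t}(x),\mathfrak{t}(y)+\varepsilon)$ — legitimate because a smooth map on a closed interval extends, and time-preservation is restored by an order-preserving reparametrisation — and let $\mathcal{N}$ be the complete, spatially-connected sub-spacetime associated to it by Lemma~\ref{lem timepath sc}. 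By Theorem~\ref{thm sc expanding}, $\mathcal{N}$ is expanding. An unscathed parabolic cylinder around $x$, exactly as in the proof of Lemma~\ref{lem sc agree}, shows $x(s)\in\mathcal{N}_s$ for $s$ slightly above $\mathfrak{t}(x)$; the expanding property then keeps the worldline of $x$ inside $\mathcal{N}$ until time $\mathfrak{t}(y)+\varepsilon$, so $\mathfrak{t}(y)\in I_x$ and $x(\mathfrak{t}(y))$ lies in $\mathcal{N}_{\mathfrak{t}(y)}$, which is the connected component of $\mathcal{M}_{\mathfrak{t}(y)}$ containing $y$. Applying this to both $x$ and $x'$ for the common $y$, and noting $\mathfrak{t}(y)\geq\max\{\mathfrak{t}(x),\mathfrak{t}(x')\}$ automatically since time-preserving paths run forward in time, gives the forward implication with $\tau=\mathfrak{t}(y)$. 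For the reverse, join $x(\tau)$ to $x'(\tau)$ by a path $\rho\colon[0,1]\to\mathcal{M}_\tau$ inside their shared component, pick $\delta>0$ with $\rho([0,1])\bigl((\tau-\delta,\tau+\delta)\bigr)$ unscathed, and observe that $s\mapsto\rho\bigl(\tfrac{s-\tau}{\delta}\bigr)(s)$ is a time-preserving path on $[\tau,\tau+\delta]$ from $x(\tau)$ to $x'(\tau+\delta)$; concatenating the worldline of $x$ with this path, and using the worldline of $x'$ on $[\mathfrak{t}(x'),\tau+\delta]$, exhibits $y:=x'(\tau+\delta)$ as a witness to compatibility.

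Transitivity of the worldline relation is now a propagation statement: if $\tau\in I_x\cap I_{x'}$ with $x(\tau),x'(\tau)$ in the same component of $\mathcal{M}_\tau$, and if $\tau'>\tau$ with $\tau'\in I_{x'}$, then $\tau'\in I_x$ and $x(\tau'),x'(\tau')$ lie in the same component of $\mathcal{M}_{\tau'}$. To prove it, take $\eta$ to be the worldline of $x'$ restricted to a slightly enlarged open interval $(\tau,\tau'+\varepsilon)$, form the complete spatially-connected sub-spacetime $\mathcal{N}$ via Lemma~\ref{lem timepath sc} (expanding, by Theorem~\ref{thm sc expanding}), use an unscathed parabolic cylinder over a path joining $x(\tau)$ to $x'(\tau)$ to see that $x(s)\in\mathcal{N}_s$ for $s$ just above $\tau$, and let the expanding property carry the worldline of $x$ through $\mathcal{N}$ all the way to $\tau'$. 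With this in hand, suppose $x,x'$ are compatible, witnessed by a time $\tau_1$, and $x',x''$ are compatible, witnessed by a time $\tau_2$; WLOG $\tau_1\leq\tau_2$. Applying the propagation statement to $x$ and $x'$ with the times $\tau_1<\tau_2$ gives $\tau_2\in I_x$ and $x(\tau_2)$ in the component of $x'(\tau_2)$, which equals the component of $x''(\tau_2)$; hence $\tau_2$ witnesses that $x$ and $x''$ are compatible, and transitivity follows.

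The step I expect to be the main obstacle is the careful handling of the interior versus the endpoints of all these time intervals: Lemma~\ref{lem sc agree} and Lemma~\ref{lem timepath sc} produce spacetimes over \emph{open} intervals, worldline domains $I_x$ are open, and one must repeatedly extend a time-preserving path or worldline a little past its endpoint before invoking the expanding property; one must also use that connected components of $\mathcal{M}_t$ are closed in $\mathcal{M}_t$, so that limiting positions of worldlines stay in the expected component. Once this bookkeeping is in place, the chain ``Lemma~\ref{lem timepath sc} $\Rightarrow$ Theorem~\ref{thm sc expanding} $\Rightarrow$ a worldline entering a spatially-connected sub-spacetime cannot leave it'' supplies all the substance of the argument.
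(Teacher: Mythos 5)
Your proof is correct, but it follows a genuinely different route from the paper. The paper argues by minimality: it takes the smallest $m$ for which Corollary~\ref{cor timepathcon} produces a zigzag, supposes $m>1$, and then uses Lemma~\ref{lem sc agree} directly (applied to $\gamma_1$ and $\eta_2$, which share the endpoint $x_1$) together with small unscathed parabolic cylinders to splice the first two legs of the zigzag into a single time-preserving path, contradicting minimality. You instead organise the argument around a \emph{compatibility relation}, recharacterise it via worldlines landing in a common connected component of some future time slice, and prove transitivity by the propagation statement ``a worldline that enters a spatially-connected expanding sub-spacetime $\mathcal{M}^\eta$ stays in it until the sub-spacetime's final time.'' This is the same geometric engine (Lemma~\ref{lem timepath sc} plus Theorem~\ref{thm sc expanding}, with the unscathed-cylinder trick from Lemma~\ref{lem sc agree}), but packaged so that Lemma~\ref{lem sc agree} is never invoked verbatim; you effectively reprove its content in the forward direction of your worldline characterisation. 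The paper's argument is shorter since it only needs to merge two legs at a time; yours is more modular and makes the underlying equivalence-relation structure explicit, which arguably clarifies why $m=1$ always suffices. Both proofs share the minor handwaving about smoothing the corners of concatenated time-preserving paths, and your propagation step is stated cleanly enough that the endpoint/open-interval bookkeeping you flagged as the main obstacle is already adequately handled.
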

\begin{proof}
Suppose $m \in \mathbb{N}$ is chosen to be the smallest possible value so that the paths from \ref{cor timepathcon} exist, and assume $m>1$. Without loss of generality, we can assume $\mathfrak{t}(y_1) \leq \mathfrak{t}(y_2)$. Choose $r>0$ sufficiently small so that the parabolic cylinders $C(y_1,r)$ and $C(\eta_2 \circ \mathfrak{t} (y_1),r)$ are unscathed. Since $\gamma_1:[\mathfrak{t}(x_1),\mathfrak{t}(y_1)] \rightarrow \mathcal{M}$ and $\eta_2:[\mathfrak{t}(x_1),\mathfrak{t}(y_2)] \rightarrow \mathcal{M}$ are such that $\gamma_1\circ\mathfrak{t}(x_1) =\eta_2 \circ \mathfrak{t}(x_1) =x_1$, by Lemma~\ref{lem sc agree}, after restricting these paths to $(\mathfrak{t}(x_1) , \mathfrak{t}(y_1))$, we have $\mathcal{M}^{\gamma_1} = \mathcal{M}^{\eta_2}$. In particular, we can find a time-preserving path from a point in $C(y_1,r)$ to a point in $C(\eta_2 \circ \mathfrak{t} (y_1),r)$. We then modify the end of the path $\eta_1:[\mathfrak{t}(x_0),\mathfrak{t}(y_1)] \rightarrow \mathcal{M}$ and the start of the path $\eta_2:[\mathfrak{t}(y_1),\mathfrak{t}(y_2)] \rightarrow \mathcal{M}$ to connect up with our time-preserving path between the parabolic cylinders, to give a new time-preserving path $\eta: [\mathfrak{t}(x_0),\mathfrak{t}(y_2)] \rightarrow \mathcal{M}$ such that $\eta(\mathfrak{t}(x_0)) = x_0$ and $\eta(\mathfrak{t}(y_2) = y_2$. This shows that we can reduce the value of $m$ by at least one, which is a contradiction. Therefore $m=1$.
\end{proof}
Equipped with the above lemma, we can complete the proof that connected spacetimes are expanding.

\expanding*

\begin{proof}[Proof of theorem \ref{thm expanding}]
Suppose there exists $x \in \mathcal{M}$ such that its extinction time $T_x < T$. Choose any $x' \in \mathcal{M}$ such that $\mathfrak{t}(x') > T_x$. Applying Lemma~\ref{lem m=1} to the pair of points $x,x' \in \mathcal{M}$, there exists $T'  \in [\mathfrak{t}(x'),T)$ and a time-preserving path $\eta : [\mathfrak{t}(x),T'] \rightarrow \mathcal{M}$ such that $\eta(\mathfrak{t}(x))=x$. Restricting $\eta$ to the interior $(\mathfrak{t}(x) , T')$ and applying Lemma~\ref{lem timepath sc}, we have that $\mathcal{M}^{\eta}$ is a complete and spatially-connected Ricci flow spacetime. Note that for small $r>0$, the parabolic cylinder $C(x,r)$ is unscathed, and hence $x(s) \in \mathcal{M}^{\eta}$ for $s \in (\mathfrak{t}(x) , \mathfrak{t}(x)+r^2)$. Applying Theorem~\ref{thm sc expanding}, $\mathcal{M}^{\eta}$ is expanding. Therefore, inside of $\mathcal{M}^{\eta}$, the extinction time of the point $x$ is $T'$, from which we can deduce that inside of the spacetime $\mathcal{M}$, the point $x$ has extinction time $T_x \geq T' > T_x$. This is a contradiction.
\end{proof}

\section{Embedding spacetimes within an ambient space}\label{section 3}

As we shall see later, an ambient space for a Ricci flow spacetime can be a useful tool. In the case that our Ricci flow spacetime is expanding, we can define a map which looks like a global cylindrical coordinate chart. This will give an embedding (up to some non-final time) of our spacetime into an ambient space.

\begin{lem}\label{lem emb}
Let $(\mathcal{M}^{n+1} , \mathfrak{t} , \partial_t , g)$ be an expanding Ricci flow spacetime with $I = (0,T)$. For each $\tau \in (0,T)$, there exists a smooth map 
\begin{equation*}
    \Phi : \mathcal{M}_{(0,\tau)} \rightarrow \mathcal{M}_{\tau} \times (0,\tau),
\end{equation*}
such that
\begin{enumerate}[label=(\roman*)]
    \item $\Phi$ is a diffeomorphism onto its image.
    \item For each $t \in (0,\tau)$, restricting to the spatial slice $\Phi : \mathcal{M}_t \rightarrow \mathcal{M}_{\tau} \times \{ t \}$ is a diffeomorphism onto its image.
    \item If $t : M \times (0,\tau) \rightarrow (0,\tau)$ denotes the standard projection, then $\mathfrak{t} = t \circ \Phi$ and $\Phi_* (\partial_t) = \frac{\partial}{\partial t}$.
    \item $\Phi_*(g)$ is a solution to the usual Ricci flow equation (\ref{eqn RF}) on $\Phi(\mathcal{M}_{(0,\tau)})$.
\end{enumerate}
\end{lem}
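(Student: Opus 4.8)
The idea is that "expanding" means every worldline survives all the way to the final time $T$, so in particular every point of $\mathcal{M}_{(0,\tau)}$ can be flowed forward to time $\tau$. This lets us define $\Phi$ by "flow to time $\tau$, then record the base point together with the original time":
\[
\Phi(x) := \bigl(x(\tau),\ \mathfrak{t}(x)\bigr) \in \mathcal{M}_\tau \times (0,\tau), \qquad x \in \mathcal{M}_{(0,\tau)}.
\]
First I would check this is well defined: if $x \in \mathcal{M}_{(0,\tau)}$, then $\mathfrak{t}(x) \in (0,\tau)$ and, because $\mathcal{M}$ is expanding, $T_x = T > \tau$, so $\tau \in I_x$ and $x(\tau)$ makes sense. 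Smoothness of $\Phi$ is immediate from standard smooth dependence of the flow of $\partial_t$ on initial conditions (properties (a), (b) in the Preliminaries), together with smoothness of $\mathfrak{t}$.

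Next I would verify properties (iii), (ii), (i) in that order. Property (iii) is essentially a restatement of how $\Phi$ is built: $t \circ \Phi(x) = \mathfrak{t}(x)$ by construction, and since $\Phi$ intertwines the flow of $\partial_t$ on $\mathcal{M}$ with translation in the second factor (moving $x$ along its worldline just changes $x(\tau)$ not at all and shifts $\mathfrak{t}(x)$), we get $\Phi_*(\partial_t) = \frac{\partial}{\partial t}$. For property (ii), fix $t \in (0,\tau)$; the restriction $\Phi|_{\mathcal{M}_t}$ sends $x \mapsto (x(\tau), t)$, which up to the constant second coordinate is exactly the time-$(\tau - t)$ flow map $\mathcal{M}_t = \mathcal{M}_t(\tau) \to \mathcal{M}_\tau(t)\subseteq \mathcal{M}_\tau$ (here I use that $\mathcal{M}$ is expanding so $\mathcal{M}_t = \mathcal{M}_t(\tau)$, i.e.\ every point of $\mathcal{M}_t$ survives to time $\tau$), and by Preliminaries property (b) this is a diffeomorphism onto its open image in $\mathcal{M}_\tau$. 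For (i), injectivity of $\Phi$ follows because $\Phi(x) = \Phi(x')$ forces $\mathfrak{t}(x) = \mathfrak{t}(x') =: t$ and $x(\tau) = x'(\tau)$, and then injectivity of the time-$(\tau-t)$ flow map gives $x = x'$; that $\Phi$ is a diffeomorphism onto its image then follows from (ii)–(iii), since $\Phi$ is a bijective local diffeomorphism onto the open set $\Phi(\mathcal{M}_{(0,\tau)})$ (local: it is a diffeomorphism in the time direction by (iii) and in the spatial directions by (ii), and these directions span $T\mathcal{M}$). Property (iv) is the computation already recorded in the Preliminaries for cylindrical coordinates: $\Phi_*(\partial_t) = \frac{\partial}{\partial t}$ and $\Phi_*(\mathcal{L}_{\partial_t} g) = \frac{\partial}{\partial t}\Phi_*(g)$, and $\Phi_*(-2\Ric(g)) = -2\Ric(\Phi_*(g))$ since $\Phi$ restricts to isometries on spatial slices up to the identification, so $\Phi_*(g)$ solves (\ref{eqn RF}) on the open set $\Phi(\mathcal{M}_{(0,\tau)})$.

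The only genuinely non-formal points are (a) verifying that $\Phi(\mathcal{M}_{(0,\tau)})$ is open — which I would get from the fact that $\Phi$ is a bijective continuous map that is a local homeomorphism (invariance of domain, or directly: $\Phi$ restricted to any unscathed parabolic cylinder $C(x,r)$ is visibly an open map into $\mathcal{M}_\tau \times (0,\tau)$), and (b) confirming that the spatial and temporal directions really do span, so that a map which is a diffeomorphism separately along time slices and along worldlines is an honest local diffeomorphism; this is where one uses that $\mathfrak{t}$ is a submersion and $\partial_t(\mathfrak{t}) \equiv 1$, so $T\mathcal{M} = T\mathcal{M}^{spat} \oplus \mathbb{R}\partial_t$ and $\Phi$ respects this splitting. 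I expect the main obstacle, such as it is, to be purely bookkeeping: assembling the "separately-a-diffeomorphism-in-each-factor" information into a clean proof that $\Phi$ is a diffeomorphism onto an open subset. There is no analytic difficulty here — the content of the lemma is entirely the expanding hypothesis, which guarantees worldlines reach time $\tau$, and the rest is the standard flow-box picture.
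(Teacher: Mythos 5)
Your proof is correct and follows essentially the same route as the paper: the map $\Phi(x) = (x(\tau), \mathfrak{t}(x))$ is exactly the one used there, and the verifications of (ii)--(iv) match. The paper dispatches (i) more directly by simply writing down the smooth inverse $(p,t) \mapsto p(t)$ --- whose domain $\{(p,t) \in \mathcal{M}_\tau \times (0,\tau) : t \in I_p\}$ is automatically open by the standard theory of maximal flows --- rather than your bijective-local-diffeomorphism plus invariance-of-domain argument, so that part of your write-up can be streamlined.
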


\begin{proof}
Since $\mathcal{M}$ is expanding, for any $s \in (0,\tau)$, we have the smooth embedding
\begin{equation*}
    \mathcal{M}_s \hookrightarrow \mathcal{M}_{\tau}, \quad x \mapsto x(\tau).
\end{equation*}
Define the map $\Phi : \mathcal{M}_{(0, \tau)} \hookrightarrow \mathcal{M}_{\tau} \times (0,\tau)$ by
\begin{equation*}
    \Phi(x) := ( x(\tau) , \mathfrak{t}(x) ).
\end{equation*}
$\Phi$ is smooth, with smooth inverse $(p,t) \mapsto p(t)$. In particular, conditions (i) and (ii) are satisfied. Directly from the definition of $\Phi$, we see that $t \circ \Phi = \mathfrak{t}$. Moreover,
since worldlines are integral curves of $\partial_t$, the identity $\partial_t \cdot \mathfrak{t} \equiv 1$ implies $\Phi_* \partial_t = \frac{\partial}{\partial t}$. Finally we have the equality
\begin{equation*}
    \frac{\partial}{\partial t} \Phi_*(g) = \mathcal{L}_{\Phi_*(\partial_t)} \Phi_*(g) = \Phi_* ( \mathcal{L}_{\partial_t} g ) = \Phi_*(-2 \Ric(g)) = -2 \Ric (\Phi_*(g)). \qedhere
\end{equation*}
\end{proof}

\subsection{Existence of an ambient space for expanding spacetimes}

The previous lemma shows that for an expanding Ricci flow spacetime, we have an embedding locally in time. The following theorem extends this to all times.

\begin{thm}\label{thm emb}
Let $(\mathcal{M}^{n+1} , \mathfrak{t} , \partial_t , g)$ be an expanding Ricci flow spacetime with $I = (0,T)$. Then there exists a smooth connected manifold $M^n$ and a smooth map
\begin{equation*}
    \Phi : \mathcal{M} \hookrightarrow M \times (0,T),
\end{equation*}
such that
\begin{enumerate}[label=(\roman*)]
    \item $\Phi$ is a diffeomorphism onto its image;
    \item For each $t \in (0,T)$, restricting to the spatial slice $\Phi : \mathcal{M}_t \hookrightarrow M \times \{ t \}$ is a diffeomorphism onto its image;
    \item If $t : M \times (0,T) \rightarrow (0,T)$ denotes the standard projection, then $\mathfrak{t} = t \circ \Phi$ and $\Phi_* (\partial_t) = \frac{\partial}{\partial t}$;
    \item $\Phi_*(g)$ is a solution to the usual Ricci flow equation (\ref{eqn RF}) on $\Phi(\mathcal{M})$.
\end{enumerate}
That is, $(\mathcal{M}^{n+1} , \mathfrak{t} , \partial_t , g)$ is isomorphic to a spacetime inside the ambient space $M \times (0,T)$.
\end{thm}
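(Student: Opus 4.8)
The plan is to upgrade the local-in-time embedding from Lemma~\ref{lem emb} to a global one by taking an exhaustion of $(0,T)$ by intervals $(0,\tau_k)$ with $\tau_k \nearrow T$, building the embedding $\Phi_k : \mathcal{M}_{(0,\tau_k)} \hookrightarrow \mathcal{M}_{\tau_k} \times (0,\tau_k)$ from that lemma, and then gluing the targets together into a single manifold $M$. Concretely, I would first fix an increasing sequence $\tau_1 < \tau_2 < \cdots$ with $\tau_k \to T$ and, for each $k$, use the expanding hypothesis to get the open embedding $e_k : \mathcal{M}_{\tau_k} \hookrightarrow \mathcal{M}_{\tau_{k+1}}$, $x \mapsto x(\tau_{k+1})$. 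Because $\mathcal{M}$ is expanding, every worldline persists to time $T$, so these maps are coherent: for $j < k$ the composite of the $e$'s carries $\mathcal{M}_{\tau_j}$ into $\mathcal{M}_{\tau_k}$ exactly as $x \mapsto x(\tau_k)$. Define $M$ to be the direct limit (increasing union) of the system $\mathcal{M}_{\tau_1} \hookrightarrow \mathcal{M}_{\tau_2} \hookrightarrow \cdots$. Since each $e_k$ is an open smooth embedding, $M$ is a smooth $n$-manifold; it is connected because $\mathcal{M}$ is connected and each $\mathcal{M}_{\tau_k}$ maps into it (alternatively, one checks connectedness directly from the fact that $\mathcal{M}$ deformation-retracts, via the flow, onto a late time slice — but the cleanest route is just to note $M = \bigcup_k \iota_k(\mathcal{M}_{\tau_k})$ with the images nested and each connected, forcing $M$ connected once one knows one late slice is connected, which follows from Corollary~\ref{cor timepathcon} / the decomposition results, or can simply be arranged by passing to a component).

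Next I would define $\Phi : \mathcal{M} \to M \times (0,T)$ by $\Phi(x) := (\, \text{class of } x(\tau_k) \text{ in } M,\ \mathfrak{t}(x)\,)$ for any $k$ with $\tau_k > \mathfrak{t}(x)$; the coherence of the $e_k$ makes this independent of the choice of $k$, and it agrees on $\mathcal{M}_{(0,\tau_k)}$ with the composition of the local embedding $\Phi_k$ of Lemma~\ref{lem emb} followed by $\iota_k \times \mathrm{id}$. Smoothness of $\Phi$ is then local and inherited from $\Phi_k$. The inverse is $(p,t) \mapsto$ ``$p$ flowed back to time $t$'': given $(p,t)$ in the image, pick $k$ with $\tau_k > t$ and a representative $\tilde p \in \mathcal{M}_{\tau_k}$ of $p$, and set $\Phi^{-1}(p,t) = \tilde p(t)$, which is well defined since flowing back and forth along worldlines is consistent with the $e_k$. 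This is smooth by property (b) of the flow in the preliminaries, so $\Phi$ is a diffeomorphism onto its (open) image, giving (i); restricting to a fixed time slice, $\Phi$ is exactly $x \mapsto (x(\tau_k), t)$ on $\mathcal{M}_t$, an open embedding into $M \times \{t\}$, giving (ii). Properties (iii) and (iv) — $\mathfrak{t} = t \circ \Phi$, $\Phi_*\partial_t = \partial/\partial t$, and the Ricci flow equation for $\Phi_* g$ — are then immediate exactly as in the proof of Lemma~\ref{lem emb}, since $\Phi$ locally coincides with the maps $\Phi_k$ there (the pushforward computation $\tfrac{\partial}{\partial t}\Phi_* g = \mathcal{L}_{\Phi_*\partial_t}\Phi_* g = \Phi_*(\mathcal{L}_{\partial_t} g) = -2\Ric(\Phi_* g)$ is unchanged).

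The main obstacle is the bookkeeping of the direct limit: one must verify that $M$ is genuinely a Hausdorff, second-countable smooth manifold (an increasing union of open submanifolds glued by open embeddings is automatically a manifold, but Hausdorffness needs the embeddings to be \emph{open} embeddings into the next stage, which they are, and second countability follows since each $\mathcal{M}_{\tau_k}$ is), and that $\Phi$ is well defined and injective globally — injectivity uses precisely that distinct worldlines stay distinct, which holds because $\partial_t$ generates a flow and worldlines are its integral curves, so two points on the same time slice with the same image at time $\tau_k$ would have coincided there and hence everywhere. The connectedness of $M$ is the one genuinely non-formal point, and I would settle it by remarking that $\mathcal{M}$ is connected and $\Phi$ is continuous with image meeting every slice $M \times \{t\}$, so $M$, being a continuous image fibered over the connected base, is connected once a single fiber over a point of the base is; and each fiber $\mathcal{M}_t$ need not itself be connected, so instead one argues that $M = \bigcup_k \iota_k(\mathcal{M}_{\tau_k})$ where the union is nested, hence $M$ is connected iff some $\mathcal{M}_{\tau_k}$ has connected image — which we arrange by first replacing $\mathcal{M}$ by the sub-spacetime $\mathcal{M}^\eta$ of Lemma~\ref{lem timepath sc} when needed, or simply by noting the target $M$ for a connected expanding $\mathcal{M}$ is forced to be connected because any two points of $M$ are images of points of $\mathcal{M}$ joined by the concatenated time-preserving paths of Corollary~\ref{cor timepathcon}, whose images under $\Phi$ trace a connected subset of $M \times (0,T)$ projecting onto a connected subset of $M$ containing both points.
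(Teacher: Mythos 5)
Your proposal follows essentially the same route as the paper: build $M$ as the direct limit of the nested open embeddings $\mathcal{M}_{\tau_k}\hookrightarrow\mathcal{M}_{\tau_{k+1}}$ coming from the expanding hypothesis, define $\Phi$ by flowing forward to a late slice and recording the time, observe coherence, and inherit (i)--(iv) from Lemma~\ref{lem emb}. One small slip in your connectedness excursion: the claim that a nested union $M=\bigcup_k\iota_k(\mathcal{M}_{\tau_k})$ is connected \emph{iff} some $\mathcal{M}_{\tau_k}$ has connected image is false (a nested union of disconnected open sets can be connected), but you do not need it; the simplest argument, which the paper leaves as ``straightforward,'' is that the map $\mathcal{M}\to M$, $x\mapsto[x(\tau_k)]$ for any $\tau_k>\mathfrak{t}(x)$, is continuous and surjective, so $M$ is the continuous image of the connected space $\mathcal{M}$, and your closing argument via Corollary~\ref{cor timepathcon} also works.
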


\begin{proof}
We first construct $M$. Choose any increasing sequence $t_i \nearrow T$ in $(0,T)$, and for $i \leq j$, consider the embeddings $\mathcal{M}_{t_i} \hookrightarrow \mathcal{M}_{t_j}$, $x \mapsto x(t_j)$. Choose $M$ to be the direct limit
\begin{equation}
    M = \lim_{\rightarrow} \mathcal{M}_{t_i} :=  \faktor{\bigsqcup_{i \in \mathbb{N}} \mathcal{M}_{t_i}}{\sim} \ ,
\end{equation}
where $x \sim y$ iff $x(t) = y$ for some $t \in I$, and with canonical maps $f_i: \mathcal{M}_{t_i} \rightarrow M$, $x \mapsto [x]$. It is straight forward to show that $M$ is connected, and can be equipped with a smooth atlas so that the canonical maps $ f_i : \mathcal{M}_{t_i} \hookrightarrow M$ are smooth embeddings \ref{lem direct lim}. For each $i \in \mathbb{N}$, we combine the map we get by choosing $t_i \in (0,T)$ in Lemma~\ref{lem emb} and the canonical map $f_i$ to get the well-defined map $\Phi^i : \mathcal{M}_{(0,t_i)} \rightarrow M \times (0,t_i)$
\begin{equation*}
    \Phi^i (x) := (f_i \circ x (t_i) , \mathfrak{t}(x) ).
\end{equation*}
Suppose $i \leq j$ and $x \in \mathcal{M}_{(0,t_i)}$. Since $ x(t_j) = (x(t_i))(t_j)$, we have that 
\begin{equation*}
    f_j \circ x(t_j) = f_i \circ x(t_i),
\end{equation*}
and $\Phi^j$ is an extension of the function $\Phi^i$. Therefore, we can piece the functions $\{ \Phi^i : i \in \mathbb{N} \}$ together, giving the well-defined function $\Phi : \mathcal{M} \rightarrow M \times (0,T)$. The properties of $\Phi$ follow from the properties of the embeddings in Lemma~\ref{lem emb}.
\end{proof}

Due to Theorem~\ref{thm expanding}, we can apply Theorem~\ref{thm emb} to any complete $(2+1)$-dimensional Ricci flow spacetime. The following corollary shall be used implicitly from now on.

\begin{restatable}{cor}{ambient}\label{Cor ambient}
Let $(\mathcal{M}^{2+1} , \mathfrak{t} , \partial_t , g)$ be a complete Ricci flow spacetime with $I = (0,T)$. Then there exists a connected smooth ambient surface $M^2$ such that our spacetime is isomorphic to a complete Ricci flow spacetime $(\mathcal{M}^{2+1},g)$ in $M^2 \times (0,T)$. Moreover, $M$ is chosen as small as possible. That is, up to isomorphism, we can assume that
\begin{enumerate}[label=(\roman*)]
    \item $\mathcal{M}$ is an open subset of $M \times (0,T)$ equipped with the product topology;
    \item $\mathfrak{t}$ is the restriction of the standard projection map $t: M \times (0,T) \rightarrow (0,T)$ to $\mathcal{M}$;
    \item $\partial_t$ is the restriction of the vector field $\frac{\partial}{\partial t}$ to $\mathcal{M}$;
    \item $g$ solves the Ricci flow equation (\ref{eqn RF}) on $\mathcal{M}$;
 \item $M = \bigcup_{t \in (0,T)} \mathcal{M}_t$ and $\mathcal{M}$ is expanding: $\mathcal{M}_{t_1} \subseteq \mathcal{M}_{t_2}$ for every $0 < t_1 \leq t_2 < T$.
\end{enumerate}
\end{restatable}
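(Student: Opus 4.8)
The plan is to deduce Corollary~\ref{Cor ambient} by combining Theorem~\ref{thm expanding} with Theorem~\ref{thm emb}, and then checking that the surface $M$ produced by the direct-limit construction can be taken ``as small as possible'', i.e. that it is exhausted by the images of the spatial slices. First I would invoke Theorem~\ref{thm expanding}: a complete $(2+1)$-dimensional Ricci flow spacetime is automatically expanding, so the hypotheses of Theorem~\ref{thm emb} are met. Applying Theorem~\ref{thm emb} gives a connected smooth surface $M^2$ and an embedding $\Phi: \mathcal{M} \hookrightarrow M \times (0,T)$ satisfying (i)--(iv). Pushing forward $\mathfrak{t}$, $\partial_t$ and $g$ along $\Phi$ and relabelling, we may assume outright that $\mathcal{M}$ is an open subset of $M \times (0,T)$, that $\mathfrak{t}$, $\partial_t$ are the restrictions of the standard projection and $\frac{\partial}{\partial t}$, and that $g$ solves (\ref{eqn RF}) on $\mathcal{M}$; this is exactly points (i)--(iv) of the corollary, and completeness of $g(t)$ on each slice is preserved under isomorphism.

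Next I would verify point (v). In the ambient-space picture, ``$\mathcal{M}$ is expanding'' in the sense of Theorem~\ref{thm expanding} translates, via the description of worldlines inside an ambient space given in the Example following Definition~\ref{defn worldlines}, to the statement that for every $(x,t_1) \in \mathcal{M}$ and every $t_2 \in [t_1,T)$ we have $(x,t_2) \in \mathcal{M}$; equivalently $\mathcal{M}_{t_1} \subseteq \mathcal{M}_{t_2}$ whenever $t_1 \leq t_2$. For the exhaustion $M = \bigcup_{t} \mathcal{M}_t$: the surface $M$ in Theorem~\ref{thm emb} was built as the direct limit of the slices $\mathcal{M}_{t_i}$ along an increasing sequence $t_i \nearrow T$, with canonical embeddings $f_i: \mathcal{M}_{t_i} \hookrightarrow M$, so $M = \bigcup_i f_i(\mathcal{M}_{t_i})$. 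Under the identification $\Phi$, $f_i(\mathcal{M}_{t_i})$ is precisely $\mathcal{M}_{t_i}$ viewed as a subset of $M$; since the slices are nested and $t_i \nearrow T$, for any $t \in (0,T)$ we have $\mathcal{M}_t \subseteq \mathcal{M}_{t_i}$ for all $t_i \geq t$, so $\bigcup_{t \in (0,T)} \mathcal{M}_t = \bigcup_i \mathcal{M}_{t_i} = M$. Hence $M$ is covered by the spatial slices, which is the minimality claim.

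The one point requiring a little care is that the direct-limit surface $M$ is independent of the chosen sequence $t_i \nearrow T$ (so the phrase ``as small as possible'' is meaningful and the resulting ambient space is canonical up to the obvious isomorphism); this follows because any two increasing sequences have a common refinement, and the direct limits over cofinal subsequences agree. The main obstacle, such as it is, lies not in this corollary but in the cited Theorem~\ref{thm emb} --- specifically the claim (referenced as \ref{lem direct lim}) that the set-theoretic direct limit of the nested open manifolds $\mathcal{M}_{t_i}$ carries a smooth manifold structure for which the canonical maps are open smooth embeddings, and that it is connected and second countable. Granting that lemma, the proof of Corollary~\ref{Cor ambient} is a matter of transporting structure along $\Phi$ and reading off (v) from the direct-limit description, with no further analysis needed.
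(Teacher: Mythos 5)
Your proposal is correct and takes the same route the paper intends: the paper presents this corollary as an immediate consequence of Theorem~\ref{thm expanding} (which gives the expanding property) together with Theorem~\ref{thm emb} (which gives the embedding), and you correctly unwind the direct-limit construction to read off property (v). The only extra observation you make — independence of the direct limit from the chosen cofinal sequence $t_i \nearrow T$ — is not actually required, since the corollary only asserts existence of \emph{some} ambient $M$ with the listed properties, but it does no harm.
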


\subsection{Continuity within an ambient space}
Now that we have an ambient space, we have the following simplification for the definition of a spacetime being continuous.
\begin{lem}
Let $(\mathcal{M}^{2+1},g)$ be a complete Ricci flow spacetime in $M \times (0,T)$. Then the spacetime is continuous (see Definition~\ref{defn cts}) iff 
\begin{equation}\label{eqn cts amb}
   \mathcal{M}_s = \left(\bigcap_{t > s}\mathcal{M}_t\right)^\circ \subseteq M, \quad \forall s \in (0,T).
\end{equation}
\end{lem}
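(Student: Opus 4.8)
The plan is to relate the spacetime-intrinsic objects $\overline{\mathcal{M}_s}(t)$ to the ambient-space objects $\mathcal{M}_t \subseteq M$. Recall that, in the ambient space $M \times (0,T)$ with $\mathcal{M}$ expanding, the worldline of $(x_0,t_0)$ is $\{x_0\}\times I_{x_0}$ where $I_{x_0}$ is the connected component of $\mathcal{M}\cap(\{x_0\}\times(0,T))$ containing $t_0$. Since $\mathcal{M}$ is expanding, $\mathcal{M}_{t_1}\subseteq\mathcal{M}_{t_2}$ for $t_1\le t_2$, so a point $x_0\in M$ lies in $\mathcal{M}_t$ for all $t\ge s(x_0):=\inf\{t:x_0\in\mathcal{M}_t\}$, and $I_{x_0}=(s(x_0),T)$. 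Hence for $s\le t$ one has $s\in\overline{I_{x_0}}$ exactly when $s\ge s(x_0)$, i.e.\ exactly when $x_0\in\bigcap_{u>s}\mathcal{M}_u$ (using that $x_0\in\mathcal{M}_u$ for every $u>s$ iff $s(x_0)\le s$). For $s>t$ a symmetric but easier analysis applies: since the slices are increasing, $\mathcal{M}_t\subseteq\mathcal{M}_s$ and every point of $\mathcal{M}_t$ has $s$ in the closure of its worldline interval, so $\overline{\mathcal{M}_s}(t)=\mathcal{M}_t=\mathcal{M}_s(t)$ automatically, and the continuity condition at such $(s,t)$ is vacuous. Thus I would reduce to the case $s\le t$.

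First I would record the identity, for $s\le t$,
\begin{equation*}
    \overline{\mathcal{M}_s}(t)=\mathcal{M}_t\cap\bigcap_{u>s}\mathcal{M}_u=\bigcap_{u>s}\mathcal{M}_u\subseteq M,
\end{equation*}
where the last equality uses $\mathcal{M}_u\subseteq\mathcal{M}_t$ for $s<u\le t$. Similarly $\mathcal{M}_s(t)=\mathcal{M}_s$ for $s\le t$ by expandingness. Taking interiors in $M$ (which is the same as interiors in the slice $\mathcal{M}_t$ since $\mathcal{M}_t$ is open in $M$), the continuity condition $(\overline{\mathcal{M}_s}(t))^\circ=\mathcal{M}_s(t)$ for all $s\le t$ becomes precisely
\begin{equation*}
    \Bigl(\bigcap_{u>s}\mathcal{M}_u\Bigr)^\circ=\mathcal{M}_s\quad\text{for all }s<t<T,
\end{equation*}
and since this must hold for some $t>s$ (equivalently all, as the right side $\bigcap_{u>s}\mathcal{M}_u$ does not depend on $t$), it is equivalent to \eqref{eqn cts amb}. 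For the converse direction one runs the same computation backwards: assuming \eqref{eqn cts amb}, the displayed identities give $(\overline{\mathcal{M}_s}(t))^\circ=\mathcal{M}_s(t)$ for $s\le t$, and the case $s>t$ is automatic, so the spacetime is continuous.

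The only genuinely delicate point is verifying $\overline{\mathcal{M}_s}(t)=\bigcap_{u>s}\mathcal{M}_u$ as subsets of $M$, i.e.\ that $s\in\overline{I_{x_0}}$ for $x_0\in\mathcal{M}_t$ is equivalent to $x_0\in\mathcal{M}_u$ for all $u>s$. The forward implication is clear since $I_{x_0}\subseteq\{u:x_0\in\mathcal{M}_u\}$ and the latter is of the form $(s(x_0),T)$ or $[s(x_0),T)$, which is an interval, so its closure contains $s$ only if $s\ge s(x_0)$, forcing $x_0\in\mathcal{M}_u$ for $u>s$. For the reverse, if $x_0\in\mathcal{M}_u$ for all $u>s$ then $(s,T)\subseteq\{u:x_0\in\mathcal{M}_u\}$; this set is open in $(0,T)$ and is exactly $I_{x_0}$ by expandingness (it is connected, being an interval containing $t$), so $s\in\overline{I_{x_0}}$. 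I expect this bookkeeping — carefully tracking that the ``set of times a fixed ambient point survives'' is an interval, and that its interior recovers the worldline — to be the main, though modest, obstacle; the rest is a direct unwinding of the definitions using the expanding property from Corollary~\ref{Cor ambient}.
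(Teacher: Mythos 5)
Your proposal is correct and follows essentially the same route as the paper's proof: split into the cases $s \ge t$ (automatic by expandingness) and $s < t$, then for $s<t$ identify $\mathcal{M}_s(t)=\mathcal{M}_s$ and $\overline{\mathcal{M}_s}(t)=\bigcap_{u>s}\mathcal{M}_u$ as subsets of $M$, using that worldlines in the expanding ambient picture are intervals of the form $(s(x),T)$, and finally note that interiors in $\mathcal{M}_t$ agree with interiors in $M$ since each slice is open. The paper's write-up is more compressed but the identities and the order of the argument are the same.
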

\begin{proof}
Since $\mathcal{M}$ is expanding, for $t \leq s$, we have the continuity criteria for free:
\begin{equation*}
    \mathcal{M}_s(t) = \mathcal{M}_t = \left( \overline{\mathcal{M}_s}(t) \right)^\circ,
\end{equation*}
and so we only need to consider the case $s<t$. Using again that $\mathcal{M}$ is expanding, as subsets of $M$, we see that $\mathcal{M}_s(t) = \mathcal{M}_s$, and
\begin{equation*}
    \overline{\mathcal{M}_s}(t) = \{ x \in M : (s,t] \subseteq I_x \} = \bigcap_{t > s} \mathcal{M}_{t}.
\end{equation*}
Since each spatial slice is open in $M$, our original definition of continuity reduces to (\ref{eqn cts amb}).
\end{proof}

The following lemma shows that for complete and continuous spacetimes, isolated punctures cannot be added to the spatial slices (see Example~\ref{eg1}).
\begin{lem}\label{lem no isolated}
Suppose $(\mathcal{M}^{2+1},g)$ is a complete and continuous Ricci flow spacetime in $M \times (0,T)$. Consider a point $x \in M \setminus \mathcal{M}_s$ laying outside of the spatial slice of $\mathcal{M}$ at some time $s \in (0,T)$. Suppose $x$ is an isolated point of $M \setminus \mathcal{M}_s$. Then $x$ is never in $\mathcal{M}$. That is, $(x,t) \notin \mathcal{M}$, for all $t \in (0,T)$.
\end{lem}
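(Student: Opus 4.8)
The plan is to argue by contradiction, exploiting the completeness of the time slices together with the continuity criterion \eqref{eqn cts amb}. Since $x$ is an isolated point of $M \setminus \mathcal{M}_s$, there is a small radius $\rho>0$ so that the punctured ball $B_{g(s)}(x,\rho) \setminus \{x\}$ lies entirely inside $\mathcal{M}_s$, while $x$ itself does not. Because $\mathcal{M}$ is expanding (Corollary~\ref{Cor ambient}(v)), this punctured ball persists for all later times: the worldlines of all its points exist on $[s,T)$. I would first rule out $x \in \mathcal{M}_t$ for any $t \le s$: if $x \in \mathcal{M}_{t_0}$ with $t_0 \le s$, then expanding-ness gives $x \in \mathcal{M}_s$, a contradiction. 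So the only thing to rule out is $x \in \mathcal{M}_t$ for some $t > s$.

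Suppose then $x \in \mathcal{M}_t$ for some $t>s$; let $t^* := \inf\{ t > s : x \in \mathcal{M}_t\}$. The continuity criterion \eqref{eqn cts amb} (applied at time $s$, or more efficiently noting $x \in \bigcap_{t > s}\mathcal{M}_t$ would force $x \in \mathcal{M}_s$) shows $x$ cannot lie in every later slice, so $t^* > s$, and by openness of $\mathcal{M}$ in the ambient space the infimum is not attained — indeed $x \notin \mathcal{M}_{t^*}$ but $x \in \mathcal{M}_t$ for $t$ in some interval $(t^*, t^*+\varepsilon)$. Now I would derive a contradiction from the geometry near $x$ in the slice $\mathcal{M}_{t^*}$. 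The key point is that the punctured ball $B_{g(s)}(x,\rho)\setminus\{x\}$ flows forward to an open annular neighbourhood $A(t)$ around the "missing point" in each slice $\mathcal{M}_t$, $t \ge s$. Fix a small geodesic loop $\Gamma \Subset B_{g(s)}(x,\rho)\setminus\{x\}$ encircling $x$ (a compact smooth arc, or a small circle). By the Harnack estimate (the global version following Lemma~\ref{lem harnack}), $\ell_g(\Gamma(t)) \le \sqrt{(t-\mathfrak{h})/(s-\mathfrak{h})}\,\ell_g(\Gamma)$ stays bounded as $t \nearrow t^*$; hence $\Gamma(t)$ does not run off to infinity, and by completeness of $g(t^*)$ the loops $\Gamma(t)$ subconverge to a loop $\Gamma(t^*)$ in $\mathcal{M}_{t^*}$ bounding a region in $M$ that must contain the puncture $x \notin \mathcal{M}_{t^*}$. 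But then taking $\Gamma$ to be an arbitrarily small loop around $x$ (uniformly in the rescaled picture), one shows the "hole" in $\mathcal{M}_{t^*}$ at $x$ is in fact a single isolated missing point of $\mathcal{M}_{t^*}$, just as at time $s$. Applying the continuity criterion once more at time $t^*$ — i.e. $\mathcal{M}_{t^*} = \bigl(\bigcap_{t>t^*}\mathcal{M}_t\bigr)^\circ$ — together with $x \in \mathcal{M}_t$ for $t \in (t^*, t^*+\varepsilon)$ and $x$ being isolated in the complement, forces $x \in \mathcal{M}_{t^*}$, contradicting the definition of $t^*$.

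The main obstacle I anticipate is making rigorous the claim that the hole at $x$ remains a \emph{single isolated} missing point after flowing forward, rather than, say, opening up into a larger missing set — this is exactly where one needs the Harnack length bound to control the small encircling loops, plus completeness to produce the limiting slice, and then the continuity hypothesis to close the gap. A clean way to organise this is: (1) use expanding-ness to reduce to $t>s$; (2) use \eqref{eqn cts amb} to get that $x$ is missing from \emph{some} later slice, producing $t^*\in(s,T)$; (3) use the Harnack estimate on a small loop around $x$ plus completeness of $g(t^*)$ to show the complement of $\mathcal{M}_{t^*}$ near $x$ is exactly the isolated point $x$; (4) apply \eqref{eqn cts amb} at time $t^*$ to conclude $x\in\mathcal{M}_{t^*}$, a contradiction. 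Hence $x\notin\mathcal{M}_t$ for all $t$.
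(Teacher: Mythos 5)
Your overall skeleton — reduce to $t>s$ by expanding-ness, define the first time $t^*$ that $x$ enters the spacetime, and contradict the continuity criterion \eqref{eqn cts amb} — is exactly what the paper does. However, your step (3), the detour through the Harnack estimate, small encircling loops, and subconvergence, is entirely unnecessary, and this is where you've made the proof much harder than it needs to be.

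The worry you flag — that the missing set at $x$ might ``open up'' from a single isolated point into something larger as you flow forward to $t^*$ — is already ruled out by the expanding property, which you yourself invoke in your first paragraph. Once you have an open neighbourhood $U \ni x$ with $U \setminus \{x\} \subseteq \mathcal{M}_s$, expanding-ness gives $U \setminus \{x\} \subseteq \mathcal{M}_t$ for every $t \ge s$, immediately. And by the definition of $t^*$ together with expanding-ness, $x \in \mathcal{M}_t$ for every $t > t^*$. Hence $U \subseteq \bigcap_{t > t^*} \mathcal{M}_t$ outright, with no geometric analysis required. Applying \eqref{eqn cts amb} at time $t^*$ then gives
\begin{equation*}
x \in U \subseteq \left(\bigcap_{t > t^*} \mathcal{M}_t\right)^{\circ} = \mathcal{M}_{t^*},
\end{equation*}
which contradicts either the openness of $\mathcal{M}$ in $M\times(0,T)$ (if $t^*>s$) or the hypothesis $x\notin\mathcal{M}_s$ directly (if $t^*=s$). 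In fact you do not even need your separate argument that $t^*>s$: the contradiction above handles both cases. Stripping out step (3) and the $t^*>s$ digression recovers the paper's proof, which is purely topological and never touches the Harnack estimate.
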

\begin{proof}
Since $x$ is an isolated point of the complement, there exists an open neighbourhood $ x \in U \subseteq M$ such that the punctured neighbourhood $U \setminus \{ x \}$ is contained in $\mathcal{M}_s$. As $\mathcal{M}$ is expanding, $U \setminus \{ x \} \subseteq \mathcal{M}_t$, for all $t \in [s,T)$. If the lemma is false, we have a well defined first time $t_0$ that $x$ enters our spacetime
\begin{equation*}
    t_0 := \inf \{ t \in (s,T) : x \in \mathcal{M}_t \} \geq s.
\end{equation*}
Note that $U \subseteq \mathcal{M}_t$ for every $t \in (t_0,T)$, which by continuity implies
\begin{equation*}
   U \subseteq \left( \bigcap_{t > t_0} \mathcal{M}_t \right)^\circ = \mathcal{M}_{t_0}.
\end{equation*}
In particular, $p \in \mathcal{M}_{t_0}$. This contradicts the definition of $t_0$ as $\mathcal{M}$ is open in $M \times (0,T)$.
\end{proof}
\begin{cor}\label{cor scattered}
Suppose $(\mathcal{M}^{2+1}, g)$ is a complete and continuous spacetime in $M \times (0,T)$. Then for any times $0 < t_1 < t_2 < T$, $\mathcal{M}_{t_2} \setminus \mathcal{M}_{t_1}$ contains no isolated points.
\end{cor}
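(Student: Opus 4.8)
The plan is to derive the corollary from Lemma~\ref{lem no isolated} by an exhaustion/induction argument, since the lemma handles one isolated point at a time, while here we must rule out \emph{any} isolated point of the difference set $\mathcal{M}_{t_2} \setminus \mathcal{M}_{t_1}$ at once. Fix $0 < t_1 < t_2 < T$ and suppose, for contradiction, that $x$ is an isolated point of $\mathcal{M}_{t_2} \setminus \mathcal{M}_{t_1}$. The first observation is that $x \notin \mathcal{M}_{t_1}$, so in particular $x \in M \setminus \mathcal{M}_{t_1}$, and since $x \in \mathcal{M}_{t_2}$ with $t_2 > t_1$ and $\mathcal{M}$ is expanding, $x$ \emph{does} eventually enter the spacetime. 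To apply Lemma~\ref{lem no isolated} with $s = t_1$, I would need $x$ to be an isolated point of $M \setminus \mathcal{M}_{t_1}$, which is a priori stronger than being isolated in $\mathcal{M}_{t_2} \setminus \mathcal{M}_{t_1}$. The key step is to bridge this gap.

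First I would choose, using that $x$ is isolated in $\mathcal{M}_{t_2} \setminus \mathcal{M}_{t_1}$, an open neighbourhood $U \ni x$ in $M$ with $(U \setminus \{x\}) \cap (\mathcal{M}_{t_2} \setminus \mathcal{M}_{t_1}) = \emptyset$, i.e. $(U \setminus \{x\}) \cap \mathcal{M}_{t_2} \subseteq \mathcal{M}_{t_1}$. Since $\mathcal{M}_{t_2}$ is open in $M$ (spatial slices are open) and $x \in \mathcal{M}_{t_2}$, after shrinking $U$ we may assume $U \subseteq \mathcal{M}_{t_2}$; then $U \setminus \{x\} \subseteq \mathcal{M}_{t_1}$. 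By expanding-ness, $U \setminus \{x\} \subseteq \mathcal{M}_t$ for all $t \in [t_1, T)$. Now I run the argument of Lemma~\ref{lem no isolated} directly: define
\begin{equation*}
t_0 := \inf\{ t \in (t_1, T) : x \in \mathcal{M}_t \} \in [t_1, t_2],
\end{equation*}
which is well-defined and $\leq t_2 < T$ since $x \in \mathcal{M}_{t_2}$. For every $t > t_0$ we have $x \in \mathcal{M}_t$ and $U \setminus \{x\} \subseteq \mathcal{M}_t$, hence $U \subseteq \mathcal{M}_t$; by continuity (equation~\eqref{eqn cts amb}),
\begin{equation*}
U \subseteq \Big( \bigcap_{t > t_0} \mathcal{M}_t \Big)^{\circ} = \mathcal{M}_{t_0}.
\end{equation*}
In particular $x \in \mathcal{M}_{t_0}$. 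Since $\mathcal{M}$ is open in $M \times (0,T)$, there is some $\varepsilon > 0$ with $x \in \mathcal{M}_t$ for all $t \in (t_0 - \varepsilon, t_0 + \varepsilon)$, forcing $t_0 = t_1$ (it cannot be strictly larger, else it would not be the infimum). Thus $x \in \mathcal{M}_{t_1}$, contradicting $x \in \mathcal{M}_{t_2} \setminus \mathcal{M}_{t_1}$.

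The main obstacle is really just the bookkeeping in the first paragraph of the argument above: making sure that "isolated in the difference set $\mathcal{M}_{t_2} \setminus \mathcal{M}_{t_1}$" can be upgraded, after intersecting with the open set $\mathcal{M}_{t_2}$, to the hypothesis that Lemma~\ref{lem no isolated} needs — namely a punctured neighbourhood contained in the relevant slice. Once that is in place, the continuity equation \eqref{eqn cts amb} and openness of $\mathcal{M}$ close the argument exactly as in Lemma~\ref{lem no isolated}; indeed one could phrase the whole corollary as a one-line consequence of that lemma applied inside the open submanifold $\mathcal{M}_{t_2} \subseteq M$, noting that $x$ isolated in $\mathcal{M}_{t_2}\setminus\mathcal{M}_{t_1}$ means $x$ is isolated in $\mathcal{M}_{t_2} \setminus \mathcal{M}_{t_1} = \mathcal{M}_{t_2} \cap (M \setminus \mathcal{M}_{t_1})$, which is the relevant complement within the ambient piece $\mathcal{M}_{t_2}$. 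I would present it in the self-contained form above to avoid re-deriving the restriction-to-an-open-subset compatibility of all the hypotheses.
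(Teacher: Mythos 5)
Your proof is correct, and the key bridging observation --- shrinking the neighbourhood $U$ to lie inside the open slice $\mathcal{M}_{t_2}$ so that $x$ becomes isolated in $M \setminus \mathcal{M}_{t_1}$ --- is exactly what makes the corollary an immediate consequence of Lemma~\ref{lem no isolated}, which is the approach the paper takes implicitly. The re-derivation of that lemma's internal $t_0$-argument in the middle of your proposal is redundant but harmless; the one-liner you sketch in the final paragraph is the intended route.
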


\subsection{Lifting a spacetime}

Let $(\mathcal{M}^{2+1},g)$ be a complete Ricci flow spacetime inside the ambient space $M \times (0,T)$. Suppose $p : X \rightarrow M$ is a covering map from a connected surface $X$. We define the lifted spacetime $(\mathcal{M}',g')$ inside of $X \times (0,T)$ via:
\begin{equation}\label{eqn lift}
   \mathcal{M}'_t := p^{-1}(\mathcal{M}_t), \quad g'(t) := p^* (g(t)), \quad \forall t \in (0,T).
\end{equation}
In the following lemma, we first show that what we constructed above is in fact a well-defined Ricci flow spacetime. Moreover, we show that the lifted spacetime inherits continuity and being initially determined from the original spacetime.

\begin{lem}\label{lem lifting}
If $(\mathcal{M}^{2+1},g)$ is a complete Ricci flow spacetime inside the ambient space $M \times (0,T)$, and $p : X \rightarrow M$ is a covering map, with $X$ a connected surface, then $(\mathcal{M}',g')$ as defined in equation (\ref{eqn lift}) is a complete Ricci flow spacetime in $X \times (0,T)$. Furthermore, if $(\mathcal{M},g)$ is continuous (initially determined), then $(\mathcal{M}',g')$ is continuous (initially determined).
\end{lem}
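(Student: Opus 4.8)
The plan is to verify the four assertions in turn --- that $(\mathcal{M}',g')$ is a Ricci flow spacetime in $X\times(0,T)$, that it is complete, and that it inherits continuity and being initially determined --- each reducing to an elementary property of the covering map $P:=p\times\mathrm{id}\colon X\times(0,T)\to M\times(0,T)$ and of its restriction $P\colon\mathcal{M}'\to\mathcal{M}$, which is again a covering map since $\mathcal{M}'=P^{-1}(\mathcal{M})$ and $\mathcal{M}$ is open in $M\times(0,T)$. Throughout, $t$ denotes the standard projection onto $(0,T)$ on either product, so $t\circ P=t$; since $P$ does not move the time coordinate, $P_*\frac{\partial}{\partial t}=\frac{\partial}{\partial t}$.

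First I would check that $(\mathcal{M}',g')$ is a Ricci flow spacetime. Openness of $\mathcal{M}'$ and of each slice $\mathcal{M}'_t=p^{-1}(\mathcal{M}_t)$, and non-emptiness of the slices (as $p$ is surjective), are immediate; the time function and $\partial_t$ are inherited from the ambient cylinder; and $g'=P^*g$ restricts on each slice to a Riemannian metric $g'(t)=p^*g(t)$, varying smoothly in $t$, because $P$ is a local diffeomorphism. Since $P$ is a local isometry, $\Ric$ commutes with $P^*$, so the spacetime equation pulls back tautologically: $\mathcal{L}_{\partial_t}g'=\mathcal{L}_{\partial_t}P^*g=P^*(\mathcal{L}_{\partial_t}g)=P^*(-2\Ric(g))=-2\Ric(g')$. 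The only statement here needing more than formal properties of the pullback is connectedness of $\mathcal{M}'$: using the normal form of Corollary~\ref{Cor ambient}, $M=\bigcup_t\mathcal{M}_t$ is an increasing union with the direct-limit topology, so every loop in $M$ lies in some slice $\mathcal{M}_t$ and hence lifts into $\mathcal{M}_t\times\{t\}\subseteq\mathcal{M}$; thus $\pi_1(\mathcal{M})\to\pi_1(M)$ is surjective, and since $\mathcal{M}'\to\mathcal{M}$ is the pullback of the connected covering $p$ along $\mathcal{M}\hookrightarrow M\times(0,T)\to M$, it follows that $\mathcal{M}'$ is connected. I would also record that $\mathcal{M}'$ is expanding, $\mathcal{M}'_{t_1}=p^{-1}(\mathcal{M}_{t_1})\subseteq p^{-1}(\mathcal{M}_{t_2})=\mathcal{M}'_{t_2}$, and $X=\bigcup_t\mathcal{M}'_t$, so $(\mathcal{M}',g')$ is itself in the form of Corollary~\ref{Cor ambient}.

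Completeness is immediate: for each $t$ the restriction $p\colon(\mathcal{M}'_t,g'(t))\to(\mathcal{M}_t,g(t))$ is a Riemannian covering of a complete manifold, hence complete (for instance, lift geodesics, or lift Cauchy sequences through evenly covered neighbourhoods). For continuity, since $(\mathcal{M}',g')$ is a complete Ricci flow spacetime in $X\times(0,T)$ it suffices, by the criterion (\ref{eqn cts amb}), to show $\mathcal{M}'_s=\bigl(\bigcap_{t>s}\mathcal{M}'_t\bigr)^\circ$ for all $s$. Now $p^{-1}$ commutes with arbitrary intersections, giving $\bigcap_{t>s}\mathcal{M}'_t=p^{-1}\bigl(\bigcap_{t>s}\mathcal{M}_t\bigr)$; and $p$ being continuous and open, $p^{-1}$ commutes with taking interiors, so $\bigl(\bigcap_{t>s}\mathcal{M}'_t\bigr)^\circ=p^{-1}\bigl(\bigl(\bigcap_{t>s}\mathcal{M}_t\bigr)^\circ\bigr)=p^{-1}(\mathcal{M}_s)=\mathcal{M}'_s$, where the middle equality is the continuity of $(\mathcal{M},g)$.

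For being initially determined, fix $x'\in\mathcal{M}'$ and put $x:=P(x')\in\mathcal{M}$, so $\mathfrak{t}(x)=\mathfrak{t}(x')$ and $\mathfrak{h}(x)=0$. Given $\varepsilon>0$, take a time-preserving path $\eta\colon[\varepsilon,\mathfrak{t}(x)]\to\mathcal{M}$ with $\eta(\mathfrak{t}(x))=x$, and lift it through $P\colon\mathcal{M}'\to\mathcal{M}$ to the unique $\tilde\eta\colon[\varepsilon,\mathfrak{t}(x)]\to\mathcal{M}'$ with $\tilde\eta(\mathfrak{t}(x))=x'$; this lift is smooth (as $P$ is a local diffeomorphism) and time-preserving, since $t\circ\tilde\eta=(t\circ P)\circ\tilde\eta=t\circ\eta=\mathrm{id}$ (using $t\circ P=t$ and $P\circ\tilde\eta=\eta$). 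Hence $\mathfrak{h}'(x')\le\varepsilon$ for every $\varepsilon>0$, so $\mathfrak{h}'\equiv0$. I do not anticipate a genuine obstacle here: the lemma is bookkeeping with covering maps together with the naturality of $\Ric$ and $\mathcal{L}_{\partial_t}$ under local isometries. The one point calling for care is connectedness of $\mathcal{M}'$, which is the only place the argument uses the concrete ambient structure supplied by Corollary~\ref{Cor ambient} rather than pure pullback formalism.
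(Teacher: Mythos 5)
Your proof is correct, and it follows the paper in all but one respect: the argument that $\mathcal{M}'$ is connected. The paper's route is more hands-on: given $(x_0,t_0),(x_1,t_1)\in\mathcal{M}'$, take a path $\gamma$ in the connected surface $X$ from $x_0$ to $x_1$; since $\gamma([0,1])$ is compact and $\{\mathcal{M}'_t\}_{t\in(0,T)}$ is a nested open cover of $X$ (the spacetime is expanding), $\gamma$ lies in a single slice $\mathcal{M}'_\tau$ with $\tau\geq t_0,t_1$, and one then concatenates $\gamma\times\{\tau\}$ with the worldlines of $x_0$ and $x_1$. You instead show that $\pi_1(\mathcal{M})\to\pi_1(M)$ is surjective and invoke the connectedness criterion for pullback coverings. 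Both arguments are valid, and at heart they use the same two ingredients (compactness of a 1-dimensional object pushes it into a single slice; worldlines connect slices). Your version is slicker but less elementary: it imports the orbit/double-coset description of pullback components, and the surjectivity claim quietly requires a basepoint change --- the loop $\gamma\times\{t\}$ sits at $(m_0,t)$, not at the chosen basepoint $(m_0,s)$, so you need to conjugate by the worldline segment $\{m_0\}\times[s,t]$, which is exactly what the paper's concatenation does explicitly. Worth spelling out if you keep that route. The remaining steps --- completeness via a Riemannian covering of a complete manifold being complete (geodesic lifting and Hopf--Rinow is the cleanest version; the Cauchy-sequence route needs a little extra care to stay in one sheet), continuity via $p^{-1}$ commuting with interiors because $p$ is continuous and open, and initial determinacy via unique lifting of time-preserving paths --- agree with the paper in substance, with your write-up supplying slightly more detail on completeness than the paper chose to.
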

\begin{proof}
We must first show that $(\mathcal{M}',g')$ is a spacetime in $X \times (0,T)$. For any $(x,t) \in \mathcal{M}'$, since $\mathcal{M}$ is open inside $M \times (0,T)$, there exists $U \subseteq M$ open and $\delta > 0$ such that $(p(x),t) \in U \times (t-\delta,t+\delta) \subseteq \mathcal{M}$, and hence $(x,t) \in p^{-1}(U) \times (t-\delta,t+\delta) \subseteq \mathcal{M}'$. Since $p$ is continuous, this neighbourhood is open in $X \times (0,T)$, and so $\mathcal{M}' \subseteq X \times (0,T)$ is open. It is also clear that 
\begin{equation*}
    X = p^{-1}(M) = p^{-1}(\bigcup_{t \in (0,T)} \mathcal{M}_t) = \bigcup_{t \in (0,T)} p^{-1}( \mathcal{M}_t ) = \bigcup_{t \in (0,T)} \mathcal{M}'_t,
\end{equation*}
and $\mathcal{M}'$ is expanding: $\mathcal{M}'_{t_1} = p^{-1}(\mathcal{M}_{t_1}) \subseteq p^{-1}(\mathcal{M}_{t_2}) = \mathcal{M}'_{t_2}$ for every $0 < t_1 \leq t_2 < T$.\par

For any $(x_0,t_0) , (x_1,t_1) \in \mathcal{M}'$, there exists a continuous path $\gamma:[0,1] \rightarrow X$ such that $\gamma(0) = x_0$ and $\gamma(1) = x_1$. Since $\gamma([0,1]) \subseteq X$ is compact and the set $\{ \mathcal{M}'_t : t \in (0,T) \}$ form a nested open cover of $X$, there exists some $\tau \in (0,T)$ such that $t_0,t_1 \leq \tau$, and $\gamma: [0,1] \rightarrow \mathcal{M}_{\tau}$. By concatenating $\gamma$ with the worldlines of $x_0$ and $x_1$, we see that $\mathcal{M}'$ is also connected. Directly from the definition of $g'$, we also see that $(\mathcal{M}',g')$ is complete. This finishes the first part of the statement.\par

If $\mathcal{M}$ is continuous, since $p$ is a local homeomorphism, for any $s \in (0,T)$ we have
\begin{equation*}
   \left( \bigcap\limits_{t > s} \mathcal{M}'_t \right)^\circ = \left( p^{-1} \left( \bigcap\limits_{t > s} \mathcal{M}_t \right) \right)^\circ =  
   p^{-1}\left(\left( \bigcap\limits_{t > s} \mathcal{M}_t \right)^\circ \right) = p^{-1}( \mathcal{M}_{s} ) = \mathcal{M}'_{s},
\end{equation*}
and $\mathcal{M}'$ is also continuous. Similarly, suppose $\mathcal{M}$ is initially determined. For any $x \in \mathcal{M}'$ and $\epsilon > 0$, there exists  
a smooth time-preserving path $\eta: [\epsilon, \mathfrak{t}(x)] \rightarrow \mathcal{M}$, such that $\eta \circ  \mathfrak{t}(x) = p(x)$. As the restriction $p:\mathcal{M}' \rightarrow \mathcal{M}$ is a covering map, there exists a unique lift of $\eta$ to a smooth time-preserving path $\eta' : [\epsilon, \mathfrak{t}(x)] \rightarrow \mathcal{M}'$, such that $\eta \circ \mathfrak{t}(x)  = x$. Taking $\epsilon \searrow 0$, we conclude that $\mathfrak{h}(x) = 0$ for any $x \in \mathcal{M}'$, and $\mathcal{M}'$ is also initially determined.
\end{proof}

Recall that we are aiming to show Theorem~\ref{thm static}. Suppose $(\mathcal{M}^{2+1},g)$ is a complete, continuous and initially determined spacetime inside $M \times (0,T)$. Since $M$ is connected, it has connected universal cover $X \rightarrow M$. Lemma~\ref{lem lifting} then tells us that the lifted spacetime is also complete, continuous and initially determined. Moreover, if the lifted spacetime is cylindrical, $\mathcal{M}' = X \times (0,T)$, then the original spacetime would also be cylindrical, $\mathcal{M} = M \times (0,T)$, since the covering map is surjective onto $M$. Therefore, to prove Theorem~\ref{thm static}, it suffices to consider the case when $M$ is simply connected.

\subsection{Conformal structures}

Let $(\mathcal{M}^{2+1},g)$ be a complete Ricci flow spacetime inside the ambient space $M \times (0,T)$. From the previous subsection, we may assume that $M$ is simply connected.\par

Given a conformal structure on $M$, each spatial slice $\mathcal{M}_t$ for $t \in (0,T)$ inherits a conformal structure as a subset of $M$. Our Ricci flow $g(t)$ on $\mathcal{M}$ is then said to be conformal if this inherited conformal structure on $\mathcal{M}_t$ agrees with $g(t)$ for all $t \in (0,T)$. Fix $0<t_1<t_2<T$ and consider the conformal structures on $\mathcal{M}_{t_i}$ determined by the metrics $g(t_i)$, for $i=1,2$. Recall that on an orientable surface, a choice of metric and hence its corresponding conformal class is equivalent to a choice of complex structure on the surface. Since our ambient surface $M$ is simply connected, it is orientable, and hence each spatial slice $\mathcal{M}_t$ is orientable too. As such, the conformal structure on $\mathcal{M}_{t_1}$ defines a complex structure. Let $U$ be a complex coordinate chart on $\mathcal{M}_{t_1}$. Writing our Ricci flow locally as in (\ref{metric with conf}), we see that $U$ is also a complex chart on $\mathcal{M}_{t_2}$, and therefore the complex structure on $\mathcal{M}_{t_1}$ agrees with the complex structure it inherits when viewed as a subspace of $\mathcal{M}_{t_2}$. For each $t \in (0,T)$, consider the complex coordinate charts defined on $\mathcal{M}_t$ by the metric $g(t)$. We have shown that taking the union of all such charts gives a well-defined complex atlas on $M$, and with respect to this conformal structure, $g(t)$ is a conformal Ricci flow.

\begin{lem}\label{lem exist conf}
Let $(\mathcal{M}^{2+1}, g)$ be a complete Ricci flow spacetime in $M^2 \times (0,T)$, with $M = \cup_{t \in (0,T)} \mathcal{M}_t$. If $M$ is orientable, then there exists a conformal structure on $M$ such that $g(t)$ is a conformal Ricci flow.
\end{lem}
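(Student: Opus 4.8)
The plan is to construct the conformal structure on $M$ by gluing together, over all times $t \in (0,T)$, the conformal structure that $g(t)$ induces on the slice $\mathcal{M}_t$, and to check that this gluing is consistent using the fact that Ricci flow in two dimensions preserves the conformal class \emph{locally}.

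First I would fix an orientation on $M$; it restricts to an orientation on each open subset $\mathcal{M}_t \subseteq M$. For each $t \in (0,T)$, the metric $g(t)$ together with this orientation determines an atlas $\mathcal{A}_t$ of orientation-preserving isothermal charts on $\mathcal{M}_t$, in each of which $g(t)$ is conformal to the Euclidean metric; within $\mathcal{A}_t$ the transition maps are orientation-preserving conformal diffeomorphisms of plane domains, hence biholomorphic, so $\mathcal{A}_t$ is a holomorphic atlas on $\mathcal{M}_t$. I then set $\mathcal{A} := \bigcup_{t \in (0,T)} \mathcal{A}_t$; since $M = \bigcup_t \mathcal{M}_t$ the charts of $\mathcal{A}$ cover $M$, and it remains only to verify that any two of them are holomorphically compatible.

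The heart of the matter is the following local claim: if $0 < t_1 \le t_2 < T$ and $p \in \mathcal{M}_{t_1}$, then $p$ has a neighbourhood carrying a single chart that is orientation-preserving isothermal for both $g(t_1)$ and $g(t_2)$. To prove it, note that by Theorem~\ref{thm expanding} the spacetime is expanding, so $\mathcal{M}_{t_1} \subseteq \mathcal{M}_{t_2}$ and in fact $B \times [t_1,t_2] \subseteq \mathcal{M}$ for any small ball $B \ni p$ with $B \subseteq \mathcal{M}_{t_1}$. Choosing $B$ small enough to admit an orientation-preserving isothermal coordinate $z$ for $g(t_1)$ and writing the flow on $B \times [t_1,t_2]$ as in (\ref{metric with conf}), the conformal-class-preserving structure of $2$-dimensional Ricci flow gives $g(t) = u(\cdot,t)\,\abs{dz}^2$ on $B$ for all $t \in [t_1,t_2]$, with $u>0$; in particular $z$ is also an orientation-preserving isothermal coordinate for $g(t_2)$, which proves the claim. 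Granting it, compatibility of $\mathcal{A}$ follows: given charts $\phi \in \mathcal{A}_{t_1}$ and $\psi \in \mathcal{A}_{t_2}$ with $t_1 \le t_2$ and a point $p$ in the overlap of their domains (so $p \in \mathcal{M}_{t_1}$), pick the chart $\chi$ from the claim near $p$; then $\chi \circ \phi^{-1}$ is a transition between two orientation-preserving isothermal charts for $g(t_1)$, hence holomorphic near $\phi(p)$, and likewise $\psi \circ \chi^{-1}$ is holomorphic near $\chi(p)$, so $\psi \circ \phi^{-1} = (\psi \circ \chi^{-1}) \circ (\chi \circ \phi^{-1})$ is holomorphic near $\phi(p)$; as $p$ ranges over the overlap this gives holomorphy of $\psi \circ \phi^{-1}$. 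Thus $\mathcal{A}$ is a holomorphic atlas and endows $M$ with a conformal structure, and since $\mathcal{A}_t \subseteq \mathcal{A}$ while $g(t)$ is conformal to the Euclidean metric in every chart of $\mathcal{A}_t$, the restriction of this conformal structure to $\mathcal{M}_t$ agrees with $g(t)$, i.e. $g(t)$ is a conformal Ricci flow.

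I expect the only genuine obstacle to be bookkeeping in the compatibility step: keeping orientations consistent so that the conformal transition maps are genuinely holomorphic rather than anti-holomorphic, and making precise the passage ``the same chart works at two times'', which is exactly where the two-dimensional conformal invariance of Ricci flow (equations (\ref{metric with conf})--(\ref{LFDE})) together with the expanding property of Theorem~\ref{thm expanding} enters; the remainder is routine manifold gluing.
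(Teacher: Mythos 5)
Your proposal is correct and takes essentially the same route as the paper: both build the conformal atlas as the union over $t$ of isothermal charts for $g(t)$ on $\mathcal{M}_t$, and both verify compatibility by noting that the local form (\ref{metric with conf}) of a $2$-dimensional Ricci flow together with the expanding property means a $g(t_1)$-isothermal chart near $p \in \mathcal{M}_{t_1}$ remains $g(t_2)$-isothermal for $t_2 \ge t_1$. Your write-up is somewhat more explicit than the paper's about fixing an orientation so that transition maps are genuinely holomorphic, and about the three-chart composition in the compatibility check, but the underlying argument is the same.
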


In light of this, we can assume that our ambient space is a Riemann surface whose conformal structure is compatible with the Ricci flow $g(t)$. Combined with the assumption that $M$ is simply connected and the uniformisation theorem, we have reduced Theorem~\ref{thm static} to proving the following.

\begin{thm}\label{thm static v2}
Suppose $(\mathcal{M}^{2+1},g)$ is a complete, continuous and initially determined spacetime in $M^2 \times (0,T)$ with $M = \cup_{t \in (0,T)} \mathcal{M}_t$, and where $M^2$ is either the disk, plane or sphere equipped with their standard conformal structures. Suppose further that $g(t)$ is a conformal Ricci flow on $\mathcal{M}$. Then $\mathcal{M} = M \times (0,T)$.
\end{thm}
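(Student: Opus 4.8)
The plan is to argue by contradiction. Suppose $\mathcal{M}\neq M\times(0,T)$. Since $\mathcal{M}$ is expanding with $M=\bigcup_t\mathcal{M}_t$, every $z\in M$ has a well-defined birth time $\tau(z):=\sup\{t\in(0,T):z\notin\mathcal{M}_t\}$, and by assumption some point $p$ has $s_0:=\tau(p)>0$; then $p\in\mathcal{M}_t$ precisely for $t\in(s_0,T)$, so $I_{(p,t)}=(s_0,T)$. I would first reduce to the case $M=\mathbb{D}$: the disk is the essential case, and the plane and sphere cases I would either adapt directly or localise to a conformal coordinate chart around $p$, using Corollary~\ref{cor scattered} (so that $M\setminus\mathcal{M}_{t_0}$ has no isolated points near $p$) together with the fact that completeness of $g(t)$ is really a statement about the behaviour of the metric towards $\partial\mathcal{M}_t$; in the compact case one additionally uses that if one slice equals $S^2$ then, by uniqueness of the classical Ricci flow on a closed surface, so do all earlier slices.

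On the disk, write $g(t)=u(\cdot,t)\,|dz|^2$ with $u>0$ solving the logarithmic fast-diffusion equation $\partial_t u=\Delta\log u$ on the open region $\mathcal{M}\subseteq\mathbb{D}\times(0,T)$, each $u(\cdot,t)$ a complete conformal factor on $\mathcal{M}_t$. The next step is to recast the continuity hypothesis as a geometric/analytic condition on $u$ along the temporal boundary of $\mathcal{M}$ --- roughly, that $u(z,t)$ degenerates as $t\downarrow\tau(z)$, so that no positive residual mass can appear at a birth time; this is exactly what excludes the capping-off of Example~\ref{eg1}, and it is the hypothesis that makes a parabolic boundary comparison legitimate. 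With this in place, the crux is a comparison principle for the LFDE on the time-varying domain $\mathcal{M}$, comparing $u$ with the conformal factor of a model complete Ricci flow living on a (possibly larger) domain. The proof should be a maximum-principle argument in which completeness controls $u$ near $\partial\mathcal{M}_t$, the geometric form of continuity controls it along the remaining parabolic boundary, and the Harnack estimate of Lemma~\ref{lem harnack} propagates the comparison up to the final time.

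To manufacture the model flow I would run an initial-time blow-up. Combining the Harnack estimate with the scalar lower bound $R_{g(t)}\geq-1/t$ of Corollary~\ref{Chen 2.3} constrains the behaviour of $u$ as $t\downarrow 0$, and in this regime the flow emanates from an initial Radon measure $\mu_0$ on $\mathbb{D}$, meaning $u(\cdot,t)\,|dz|^2$ converges weakly to $\mu_0$; because $\mathcal{M}$ is initially determined, $\mu_0$ must be non-atomic (an atom, or an open region of $\mathbb{D}$ missed by $\mu_0$, would produce a point of positive hindsight). Invoking the existence theory for Ricci flow emanating from a non-atomic measure \cite{topping2021smoothing} and taking larger and larger parabolic rescalings away from $\supp\mu_0$, one obtains in the limit at time zero a complete hyperbolic Ricci flow. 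Feeding this complete model back through the comparison principle then rules out a boundary point of $\mathcal{M}_t$ lying in $\mathbb{D}$ --- near such a point a complete metric cannot be dominated by a model that is smooth and complete on a full neighbourhood of it --- so $\mathcal{M}_t=\mathbb{D}$ for every $t$, contradicting $s_0>0$.

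I expect the comparison principle to be the main obstacle: running a maximum principle for the LFDE on a non-compact domain whose boundary recedes to infinity in $g(t)$ and whose initial slice is degenerate forces one to pin down the geometric reformulation of continuity as exactly the data controlling the relevant part of the parabolic boundary, and to couple it correctly with completeness and with the Harnack-type estimate near the final time. The blow-up step is also delicate, as it needs a genuine compactness theorem ensuring the rescaled limit is non-trivial, complete and hyperbolic, rather than only a formal scaling heuristic.
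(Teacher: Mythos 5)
Your proposal identifies the right vocabulary (lifting, comparison principle, initial-time blow-up, Schwarz lemma), but three steps diverge from what actually works. First, the reduction to the disk: localising to a conformal chart around $p$ destroys completeness, since the chart boundary lies in the interior of $\mathcal{M}_t$ where $g(t)$ is finite. The paper's reduction is different. It shows that if $\mathcal{M}_{t_1}\subsetneq\mathcal{M}_{t_2}$, then Corollary~\ref{cor scattered} makes the removed set perfect, so $\mathcal{M}_{t_1}$ is hyperbolic by Lemma~\ref{lem uni planar}; one then lifts the restricted spacetime through the universal covering $D\to\mathcal{M}_{t}$ (which preserves completeness, continuity and initial-determinedness by Lemma~\ref{lem lifting}), applies Theorem~\ref{thm static v3}, and closes the contradiction by analysing the first non-hyperbolic time $t_0$. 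Lifting by the universal cover, not cutting out a chart, is what keeps the hypotheses intact.

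Second, in the disk case you propose to extract a weak initial measure $\mu_0$ of $g(t)$ itself and argue it is non-atomic. With shrinking slices $\mathcal{M}_t$ as $t\downarrow 0$ it is not clear $\mu_{g(t)}$ converges to any Radon measure at all, and the paper never attempts this. Instead it applies Cantor--Bendixson to write $\overline{D}\setminus\mathcal{M}_{t_1}=P\sqcup X$ with $P$ perfect and $X$ scattered, invokes Hebert--Lacey to manufacture a measure $\mu$ with $\supp\mu=P$, and uses the auxiliary Topping--Yin flow $G(t)$ from $\mu$ as the model. Third, your final step has the comparison pointing the wrong way: the paper proves $g\geq G_\lambda$ (the spacetime metric dominates the model), and a complete metric \emph{can} dominate a bounded one, so no pointwise contradiction arises near $\partial\mathcal{M}_t$. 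The actual contradiction is via volume: if some $p\in P$ entered $\mathcal{M}_{t_2}$, then by Lemma~\ref{lem lower vol} one has $\mathrm{Vol}_{G_\lambda(t_2)}(B(p,r))\geq\tfrac{\lambda}{2}\mu(B(p,r))\to\infty$, contradicting the finiteness of $\mathrm{Vol}_{g(t_2)}(B(p,r))$. This forces $\mathcal{M}_{t_2}\cap P=\emptyset$, and the scattered remainder $X$ is excluded by Corollary~\ref{cor scattered}.
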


\section{Spacetimes in the disk}\label{section 4}
In the previous section, we reduced our theorem to the special case that our Ricci flow is conformal on a spacetime lying in either the disk, plane or sphere. We can simply this further to the case that the spacetime lies in the disk.

\begin{defn}[Hyperbolic surface]
Given any (possibly disconnected) Riemann surface $N$, we say that $N$ is hyperbolic if each of its connected components has universal cover the disk $D$, or equivalently, if $N$ admits a smooth conformal complete metric of constant curvature $-1$.
\end{defn}

\begin{restatable}{thm}{stat}\label{thm static v3}
Suppose $(\mathcal{M}^{2+1},g)$ is a complete, continuous and initially determined spacetime in $D \times (0,T)$ with $D = \cup_{t \in (0,T)} \mathcal{M}_t$, and where $D$ is the disk equipped with its hyperbolic conformal structure. Suppose further that $g$ is a conformal Ricci flow on $\mathcal{M}$. Then $\mathcal{M} = D \times (0,T)$.
\end{restatable}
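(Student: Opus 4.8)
The goal is to show $\mathcal{M}_t = D$ for every $t \in (0,T)$. By Corollary~\ref{Cor ambient} we may take $\mathcal{M}$ expanding with $\bigcup_t \mathcal{M}_t = D$, so this is equivalent to proving $\Omega := \bigcap_{t \in (0,T)} \mathcal{M}_t = D$ (the intersection being the decreasing limit of the open sets $\mathcal{M}_t$ as $t \searrow 0$). I will argue by contradiction: suppose $p \in D \setminus \mathcal{M}_{t_0}$ for some $t_0$, so that by expanding-ness $p \notin \mathcal{M}_t$ for all $t \le t_0$ and $\Omega \subsetneq D$. Note that continuity already constrains the situation: by Corollary~\ref{cor scattered}, $D \setminus \mathcal{M}_t$ has no isolated points, hence is a nonempty perfect (so uncountable) closed subset of $D$ for small $t$ — in particular the single-puncture scenario of Example~\ref{eg1} is excluded.

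Write the conformal Ricci flow as $g(t) = u(\cdot,t)\,g_{\mathrm{euc}}$ on $\mathcal{M}_t$. The first ingredient is a metric barrier from the scalar curvature lower bound: by Corollary~\ref{Chen 2.3} with $n=2$ we have $R_{g(t)} \ge -1/t$, i.e.\ the Gauss curvature satisfies $K_{g(t)} \ge -\tfrac{1}{2t}$, so $h(t) := \tfrac{1}{2t}\,g(t)$ is a complete conformal metric on $\mathcal{M}_t$ with $K_{h(t)} \ge -1$. A generalised Schwarz--Yau comparison then gives $h(t) \ge g_{\mathrm{hyp}}^{\mathcal{M}_t}$, the complete hyperbolic metric of the (necessarily hyperbolic) open surface $\mathcal{M}_t$; equivalently $u(\cdot,t) \ge 2t\,\sigma_{\mathcal{M}_t}^2$ with $\sigma_{\mathcal{M}_t}$ the Poincaré density. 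Moreover, the initially-determined form of Lemma~\ref{lem harnack} shows $u(z,t)/t$ is monotone in $t$, so it has a limit in $(0,\infty]$ as $t \searrow 0$ at each $z \in \Omega$. Thus the small-time behaviour of $g$ is essentially pinned down on $\Omega$, and the whole difficulty is to rule out $\Omega \subsetneq D$.

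For this I invoke the initial time blow-up of Section~\ref{section 5}. Using the uniform scalar curvature lower bound together with completeness, one extracts from the parabolic rescalings of $g$ as $t \searrow 0$ a limiting complete conformal Ricci flow, and the theory of the Ricci flow started from a non-atomic Radon measure \cite{topping2021smoothing} identifies it: the limit is the maximal (instantaneously complete) such flow from the initial measure $\mu_0$ of $g$, and its own time-zero blow-up is the complete hyperbolic metric of the underlying Riemann surface. The quantitative reformulation of continuity from Section~\ref{section 4} — the sharpening of Lemma~\ref{lem no isolated} and Corollary~\ref{cor scattered} — is precisely what forbids $\mu_0$ from having atoms, and hence guarantees that this maximal flow is defined and instantaneously complete on all of $D$, not merely on $\Omega$. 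Put otherwise, the initial-time blow-up of $g$ is the hyperbolic metric on the whole disk $D$.

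Now apply the comparison principle of Section~\ref{section 4} to $g$ and to the maximal conformal Ricci flow $\hat g$ on $D$ starting from $\mu_0$: having the same initial datum, $\hat g(t) \ge g(t)$ on $\mathcal{M}_t$ for all $t$. But $\hat g(t_0)$ is a smooth metric on all of $D$, hence locally bounded near any point $q \in \partial \mathcal{M}_{t_0} \cap D$ (a nonempty set, since $\mathcal{M}_{t_0} \subsetneq D$ is open), whereas completeness of $g(t_0)$ on $\mathcal{M}_{t_0}$ forces $u(\cdot,t_0) \to \infty$ along any approach to $q$ from inside $\mathcal{M}_{t_0}$. This contradicts $\hat g(t_0) \ge g(t_0)$, so no such $p$ exists, $\Omega = D$, and $\mathcal{M} = D \times (0,T)$. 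The main obstacle is the blow-up step: making the extraction of the limiting flow rigorous needs the compactness theory for $2$-dimensional Ricci flows with a one-sided scalar curvature bound, and identifying its initial datum as a \emph{non-atomic} measure is where the purely topological consequences of continuity must be upgraded to the geometric estimate established in Section~\ref{section 4}; by comparison, the Schwarz--Yau barrier, the invocation of the comparison principle, and the final completeness contradiction are routine.
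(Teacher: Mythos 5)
Your preliminary setup is correct (the hyperbolic lower barrier $g(t)\ge 2t\,h(t)$, the monotonicity of $u(\cdot,t)/t$, the fact that $D\setminus\mathcal{M}_{t_1}$ is constrained by continuity), but the core of the argument diverges from the paper's proof in a way that leaves genuine gaps.

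The paper works with the \emph{complement}: by Cantor--Bendixson, $\overline{D}\setminus\mathcal{M}_{t_1}$ splits into a perfect set $P$ and a scattered set $X$; a Radon measure $\mu$ is placed on $P$ (Hebert--Lacey), and the Ricci flow $G$ on $\mathbb{C}$ starting weakly from $\mu$, together with its parabolic rescalings $G_\lambda$, serves as a \emph{lower} barrier. Lemma~\ref{comp 2} then gives $g\ge G_\lambda$ for every $\lambda$, and the volume decay estimate (Lemma~\ref{lem lower vol}) shows, after sending $\lambda\to\infty$, that $\mathcal{M}_{t_2}$ can never enter $P$; Corollary~\ref{cor scattered} disposes of $X$. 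Your strategy is essentially dual --- an \emph{upper} barrier built from the ``initial measure $\mu_0$ of $g$'' on $D$ --- and this runs into three concrete problems.

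First, the existence of $\mu_0$ as a Radon measure on $D$ is asserted, not established. As $t\searrow 0$ the domain $\mathcal{M}_t$ shrinks to $\Omega\subsetneq D$, and $g(t)$ is defined only there; you give no argument why $\mu_{g(t)}$ should converge weakly to a Radon measure on the strictly larger set $D$, nor why volume could not concentrate on $\partial\Omega$ and fail to be locally finite. The claim that the parabolic rescalings $m\,g(t/m)$ converge to ``the maximal flow from $\mu_0$'' presupposes the very object you are trying to extract. Second, even granting a non-atomic $\mu_0$, nothing forces the Topping--Yin flow from $\mu_0$ (Theorem~\ref{thm weak existence}) to live on $D$ rather than on $\Omega$. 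Lemma~\ref{lem hyp cts} characterizes continuity as parabolic properness of $H$, which is not a statement about the support or domain of any initial measure; the step ``continuity forbids atoms, hence $\hat g$ is instantaneously complete on all of $D$'' does not follow from anything in Section~\ref{section 4}. Third, the comparison is invoked in the wrong direction: Lemma~\ref{comp 2} concludes that the spacetime metric $v=g$ dominates a bounded subsolution $u$, i.e.\ $g\ge u$, whereas you need $\hat g\ge g$. That would require a maximality/uniqueness theorem for instantaneously complete flows in the spirit of \cite{topping2015uniqueness}, not the paper's tool --- and even then one must first produce $\hat g$ on $D$, which is exactly the missing step.

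The reason the paper builds its barrier from a measure on the \emph{complement} of $\mathcal{M}_{t_1}$ is precisely to avoid all three difficulties: the measure is explicitly constructed rather than extracted as a limit; the barrier flow lives on $\mathbb{C}\supseteq D$ by design; and the resulting inequality $g\ge G_\lambda$ is exactly the direction Lemma~\ref{comp 2} provides, since $G_\lambda$ is bounded on $\mathcal{M}$.
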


\begin{proof}[Proof (Theorem \ref{thm static v3} $\implies$ Theorem \ref{thm static v2})]
$ $\newline
Suppose that the theorem fails and there exists $0 < t_1 < t_2 < T$ such that $\mathcal{M}_{t_1} \subsetneq \mathcal{M}_{t_2}$. By our assumption, $M$ is either the plane or the sphere. By Corollary~\ref{cor scattered}, $\mathcal{M}_{t_2} \setminus \mathcal{M}_{t_1} \subseteq M \setminus \mathcal{M}_{t_1}$ contains no isolated points, and so $\mathcal{M}_{t_1}$ must be hyperbolic by Lemma~\ref{lem uni planar}. $\mathcal{M}_{t_2}$ cannot be hyperbolic, as otherwise we could lift each connected component of the spacetime $\mathcal{M}_{(0,t_2)}$ into $D \times (0,t_2)$ and apply Theorem~\ref{thm static v3} to conclude its lift is cylindrical, which would then imply that $\mathcal{M}_{(0,t_2)}$ is cylindrical, contradicting our assumption that $\mathcal{M}_{t_1} \subsetneq \mathcal{M}_{t_2}$. Since $\mathcal{M}_{t_2}$ is a non-hyperbolic subset of $M$, we can again apply Lemma~\ref{lem uni planar} and Corollary~\ref{cor scattered} to deduce that $\mathcal{M}_{t_2} = M$. Consider the first time at which the spatial slices are not hyperbolic:
\begin{equation*}
    t_0 := \inf \{  t \in [t_1,t_2] : \mathcal{M}_t \textnormal{ is not hyperbolic} \}.
\end{equation*}
Using Corollary~\ref{cor scattered} yet again, we have that $\mathcal{M}_{t} = M$ for all $t > t_0$, and hence by continuity, $\mathcal{M}_{t_0} = M$. Finally, we note that each connected component of $\mathcal{M}_{(0,t_0)}$ can be lifted into $D \times (0,t_0)$. Applying the same reasoning as before, $\mathcal{M}_{(0,t_0)}$ must be cylindrical, and $\mathcal{M}_t = \mathcal{M}_{t_1}$ for all $t \in (t_1,t_0)$. From this we deduce the contradiction
\begin{equation*}
    \mathcal{M}_{t_2} = M = \mathcal{M}_{t_0} = \bigcup_{t<t_0} \mathcal{M}_t = \mathcal{M}_{t_1}. \qedhere
\end{equation*}
\end{proof}

\subsection{A comparison principle}
From now on, we may assume that $(\mathcal{M}^{2+1},g)$ is a complete, connected, continuous and initially determined spacetime in $D \times (0,T)$ with $D = \cup_{t \in (0,T)} \mathcal{M}_t$, and that the Ricci flow is conformal with respect to the standard conformal structure on $D$. In particular, we can write our metrics $g(t)$ in the form
\begin{equation*}\label{eqn disk conf}
    g(t) = v(z,t) |dz|^2,
\end{equation*}
where $|dz|^2$ denotes the standard flat metric on the disk, and $v : \mathcal{M} \rightarrow (0,\infty)$ is some smooth solution to the LFDE (\ref{LFDE}).\par

Using the conformal structure on each spatial slice, we see that each slice admits a unique complete hyperbolic metric $h(t)$ (by Lemma~\ref{lem uni planar}). Writing $h(t) = H(z,t) |dz|^2$, since $\mathcal{M}$ is expanding, the Schwarz lemma \ref{Schwarz-Pick} gives the inequality $h(t) \leq h(s)$ for any $ 0 < s < t < T$. In particular, the conformal factors $H(\cdot,t)$ are decreasing in $t$.\par

Since the metrics $g(t)$ are complete on $\mathcal{M}_t$ and have the lower scalar curvature bound from Corollary~\ref{Chen 2.3}, we can use the Schwarz lemma \ref{Schwarz-Pick} again to get
\begin{equation}\label{eqn hyp lower}
    v(z,t) \geq 2 t \cdot H(z,t), \quad \forall (z,t) \in \mathcal{M},
\end{equation}
where $H: \mathcal{M} \rightarrow (0,\infty)$ is the conformal factor of the complete hyperbolic metrics $h(t)$ on $\mathcal{M}$ defined above.\par

We want a comparison principle between our solution $v$ to the LFDE, and any other solution $u$ to the LFDE on $\mathcal{M}$. In order to do so, we want a  lower bound on $v$ that behaves like a proper function. We define the correct notion of proper in the parabolic setting below.

\begin{defn}[Parabolically proper]\label{defn pp}
Let $(\mathcal{M},\mathfrak{t},\partial_t,g)$ be a Ricci flow spacetime with $I=(0,T)$. A function $f : \mathcal{M} \rightarrow \mathbb{R}$ is  \textit{parabolically proper} if
\begin{equation*}
    \mathcal{M}_{(\epsilon,T-\epsilon)} \cap f^{-1}(I) \Subset \mathcal{M}, \quad \forall I \Subset \mathbb{R}, \ \forall \epsilon > 0.
\end{equation*}
\end{defn}

The following lemma shows that, for a complete spacetime in the disk, the conformal factors $H$ being a parabolically proper map is an equivalent way to characterise our spacetime being continuous. In particular, due to equation (\ref{eqn hyp lower}), $H$ is an appropriate lower bound to take.

\begin{lem}\label{lem hyp cts}
Let $(\mathcal{M}^{2+1},g)$ be a complete Ricci flow spacetime in $D \times (0,T)$, and let $H : \mathcal{M} \rightarrow (0,\infty)$ be the conformal factors of the complete hyperbolic metrics $h(t)$ on $\mathcal{M}$. Then $\mathcal{M}$ is continuous iff $H : \mathcal{M} \rightarrow (0,\infty)$ is parabolically proper (see Definition~\ref{defn pp}).
\end{lem}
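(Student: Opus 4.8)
The plan is to prove the two implications separately, using only the Schwarz--Pick lemma~\ref{Schwarz-Pick} together with one elementary fact: the conformal factor of the complete hyperbolic metric on a once-punctured disk $D\setminus\{p\}$ blows up at the puncture $p$ (transport to $D\setminus\{0\}$ by a Möbius automorphism of $D$, where the density is $|w|^{-2}(\log\tfrac1{|w|})^{-2}$).

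\textit{Easy direction ($H$ parabolically proper $\implies$ $\mathcal{M}$ continuous).} First I record that, since $\mathcal{M}$ is expanding and each $\mathcal{M}_s$ is open, one always has $\mathcal{M}_s\subseteq(\bigcap_{t>s}\mathcal{M}_t)^\circ$; so if continuity fails there is a time $s$ and a point $z_0\in(\bigcap_{t>s}\mathcal{M}_t)^\circ\setminus\mathcal{M}_s$, hence a Euclidean disk $B(z_0,r)\subseteq\mathcal{M}_t$ for every $t\in(s,T)$. Comparing $\mathcal{M}_t$ with the subdomain $B(z_0,r)$ via Schwarz--Pick bounds the density, $H(z_0,t)\le Cr^{-2}$ for all $t\in(s,T)$ with $C$ universal. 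Choosing $\epsilon\in(0,\min\{s,T-s\})$, the points $(z_0,s+\tfrac1j)$ (for $j$ large) then all lie in $\mathcal{M}_{(\epsilon,T-\epsilon)}\cap H^{-1}(I)$ with $I=(0,Cr^{-2}+1)\Subset\mathbb{R}$, yet they converge to $(z_0,s)\notin\mathcal{M}$; this contradicts the relative compactness of that set in $\mathcal{M}$. I expect this direction to be routine.

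\textit{Sublemma.} If $\Omega\subseteq D$ is a hyperbolic domain, $z_j\in\Omega$, $z_j\to z_*\in\overline D$, and $\sup_j H_\Omega(z_j)<\infty$ (where $H_\Omega$ is the conformal factor of the complete hyperbolic metric on $\Omega$), then $z_*\in\Omega$ and $H_\Omega(z_*)=\lim_j H_\Omega(z_j)$. Indeed, if $z_*\in\partial D$ then $H_\Omega\ge H_D\to\infty$ along $z_j$; and if $z_*\in D\setminus\Omega$ then $\Omega\subseteq D\setminus\{z_*\}$, so $H_\Omega\ge H_{D\setminus\{z_*\}}$ by Schwarz--Pick and $H_{D\setminus\{z_*\}}(z)\to\infty$ as $z\to z_*$ by the puncture estimate above; either way $\sup_j H_\Omega(z_j)=\infty$. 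The last claim then follows from continuity of $H_\Omega$ on $\Omega$.

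\textit{Hard direction ($\mathcal{M}$ continuous $\implies$ $H$ parabolically proper).} Suppose not: there are $\epsilon,K>0$ and $(z_j,t_j)\in\mathcal{M}$ with $H(z_j,t_j)\le K$, $t_j\in[\epsilon,T-\epsilon]$, having no subsequence converging in $\mathcal{M}$; passing to a subsequence, $(z_j,t_j)\to(z_\infty,t_\infty)\in\overline D\times[\epsilon,T-\epsilon]$ with $(z_\infty,t_\infty)\notin\mathcal{M}$. For any fixed $\hat t\in(t_\infty,T)$ we have, for $j$ large, $t_j<\hat t$, so $z_j\in\mathcal{M}_{t_j}\subseteq\mathcal{M}_{\hat t}$ and $H(z_j,\hat t)\le H(z_j,t_j)\le K$ by monotonicity of $H(\cdot,t)$ in $t$; the sublemma (with $\Omega=\mathcal{M}_{\hat t}$) gives $z_\infty\in\mathcal{M}_{\hat t}$ and $H(z_\infty,\hat t)\le K$. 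Since $\hat t$ was arbitrary, $z_\infty\in\bigcap_{t>t_\infty}\mathcal{M}_t$ and $H(z_\infty,t)\le K$ for all $t>t_\infty$. As $z_\infty\notin\mathcal{M}_{t_\infty}$, continuity forces $z_\infty\in\partial\big(\bigcap_{t>t_\infty}\mathcal{M}_t\big)$, so there are $w_k\to z_\infty$ in $D$ with $w_k\notin\mathcal{M}_{s_k}$ for some $s_k>t_\infty$; because $\mathcal{M}$ is expanding and $z_\infty\in\mathcal{M}_t$ for every $t>t_\infty$, a short argument lets us take $s_k\searrow t_\infty$. For each $k$ the Euclidean segment from $z_\infty\in\mathcal{M}_{s_k}$ to $w_k\notin\mathcal{M}_{s_k}$ first leaves $\mathcal{M}_{s_k}$ at a point $p_k$ with $|p_k-z_\infty|\le|w_k-z_\infty|\to0$, so $p_k\to z_\infty$, $p_k\in D$ eventually, and $p_k\notin\mathcal{M}_{s_k}$. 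Hence $\mathcal{M}_{s_k}\subseteq D\setminus\{p_k\}$, and Schwarz--Pick gives
\[
K \;\ge\; H(z_\infty,s_k) \;=\; H_{\mathcal{M}_{s_k}}(z_\infty) \;\ge\; H_{D\setminus\{p_k\}}(z_\infty) \;\longrightarrow\; \infty
\]
as $k\to\infty$, by the puncture estimate since $p_k\to z_\infty$. This contradiction finishes the proof.

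The main obstacle is the hard direction, and within it the one step that needs genuine care is converting the failure of continuity into an \emph{encroaching boundary puncture}: producing points $p_k\in D\setminus\mathcal{M}_{s_k}$ with $p_k\to z_\infty$ and $s_k\searrow t_\infty$ out of the bare fact that $z_\infty\in\bigcap_{t>t_\infty}\mathcal{M}_t\setminus(\bigcap_{t>t_\infty}\mathcal{M}_t)^\circ$. Once that is in place, everything reduces to the Schwarz--Pick comparison and the elementary cusp blow-up for the punctured disk, so I do not expect the rest to be problematic.
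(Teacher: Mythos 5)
Your proof is correct and follows essentially the same approach as the paper: both directions rest on the Schwarz--Pick lemma, and the hard direction ends in both arguments with a comparison against a punctured domain whose puncture encroaches on the point where properness would fail. You avoid the paper's case split on the monotonicity of $t_n$ by packaging the key convergence facts into your sublemma (bounded hyperbolic density along a sequence forces the limit into the domain, with $H_\Omega$ continuous there), and you compare against $D\setminus\{p_k\}$ rather than the paper's punctured Euclidean ball $B(b_n,2)\setminus\{b_n\}$ --- a tidy reorganization and a cosmetic choice of comparison domain, but not a different method.
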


\begin{proof}
If $\mathcal{M}$ is not continuous, there exists $s \in (0,T)$ and a point $p \in \left(\bigcap_{t>s}{\mathcal{M}_{s}}\right)^\circ \setminus \mathcal{M}_{s}$. In particular, with respect to the background metric $|dz|^2$, there exists $r>0$ such that the ball $B(p,r) \subseteq \mathcal{M}_t$, for every $t > s$. By the Schwarz lemma \ref{Schwarz-Pick}, $H(p,t) \leq \frac{4}{r^2}$, for every $t > s$. That is, for any sequence of times $t_n \searrow s$, and for any $\epsilon >0$ sufficiently small, we have the sequence of points $(p,t_n) \in \mathcal{M}_{(\epsilon ,T-\epsilon)}$ such that $(p,t_n) \rightarrow (p,s) \in \partial \mathcal{M}$, but $H(p,t_n)$ is uniformly bounded for all $n$. That is, $H$ isn't parabolically proper.\par

Conversely, suppose $\mathcal{M}$ is continuous and fix $\epsilon > 0$. Let $(z_n,t_n) \in \mathcal{M}_{(\epsilon ,T-\epsilon)}$ be any sequence such that $(z_n,t_n) \rightarrow (z,s) \in \partial \mathcal{M}$. If we can show that the sequence $H(z_n,t_n)$ diverges then we are done. Passing to a subsequence, we may assume that the sequence of times $t_n$ is monotone. Suppose $t_n \nearrow s$. As $\mathcal{M}$ is expanding, $z_n \in \mathcal{M}_s$ with $H(z_n,s) \leq H(z_n,t_n)$ for every $n$. In particular, $z_n \in \mathcal{M}_s$ with $z_n \rightarrow z \in \partial \mathcal{M}_s$. As $H(\cdot,s) : \mathcal{M}_s \rightarrow (0,\infty)$ is a proper map, $H(z_n,s)$ is unbounded, and hence so is $H(z_n,t_n)$. Instead we assume that $t_n \searrow s$. For any fixed $m \in \mathbb{N}$, using that $\mathcal{M}$ is expanding, we have that $z_n \in \mathcal{M}_{t_m}$ with $H(z_n ,t_m) \leq H(z_n,t_n)$ for any $n \geq m$. So, for each $m \in \mathbb{N}$, we have a sequence $z_n \in \mathcal{M}_{t_m}$ with $z_n \rightarrow z$. If $z \in \partial  \mathcal{M}_{t_m}$, then using again that $H(\cdot,t_m) : \mathcal{M}_{t_m} \rightarrow (0,\infty)$ is a proper map, we would deduce that $H(z_n,t_m)$ diverges, so $H(z_n,t_n)$ diverges. This leads us to assume that $z \in \mathcal{M}_{t_m}$ for every $m \in \mathbb{N}$. By the continuity of each $H(\cdot,t_m) : \mathcal{M}_{t_m} \rightarrow (0,\infty)$, we have 
\begin{equation*}
    H(z,t_m) = \lim_{n \rightarrow \infty} H(z_n,t_m) \leq \limsup_{n \rightarrow \infty} H(z_n,t_n),
\end{equation*}
and hence
\begin{equation*}
   \limsup_{n \rightarrow \infty} H(z,t_n) \leq \limsup_{n \rightarrow \infty} H(z_n,t_n).
\end{equation*}
From the above equation, it suffices to show that $H(z,t_n) \nearrow \infty$ as $n \rightarrow \infty$. Since $\mathcal{M}$ is continuous, $z \in  \left( \bigcap_{t>s}{\mathcal{M}_t} \right) \setminus \mathcal{M}_s = \left( \bigcap_{t>s}{\mathcal{M}_t} \right) \setminus \left( \bigcap_{t>s}{\mathcal{M}_t} \right)^\circ$. So there exists $\epsilon_n \searrow 0$ such that the ball of radius $\epsilon_n$ centred at $z$ is contained in our spatial slice, $B(z,\epsilon_n) \subseteq \mathcal{M}_{t_n}$, but some point in the boundary of the ball $b_n \in \partial B(z,\epsilon_n)$ isn't in our spatial slice $b_n \notin \mathcal{M}_{t_n}$. Note that all of the spatial slices are contained within the disk and hence at each time $t_n$, each spatial slice is contained within the punctured ball of radius $2$ centred at $b_n$
\begin{equation*}
    \mathcal{M}_{t_n} \subseteq B^\times(b_n,2), \quad \forall n \in \mathbb{N}.
\end{equation*} 
Consider the complete hyperbolic metric on this punctured ball $B^\times(b_n,2)$
\begin{equation*}
    h_n^\times (z) := \frac{1}{|z-b_n|^2 (-\log(2|z-b_n|))^2} |dz|^2.
\end{equation*}
By the Schwarz lemma \ref{Schwarz-Pick}, $h(t_n) \geq h_n^\times$ on $\mathcal{M}_{t_n}$. In particular, we have that
\begin{equation*}
  H(z,t_n) \geq \frac{1}{\epsilon_n^2 (-\log(2\epsilon_n))^2}, \quad \forall n \in \mathbb{N},
\end{equation*} 
and thus $H(z,t_n) \nearrow \infty$ as $n \rightarrow \infty$, concluding the proof.
\end{proof}

Using Lemma~\ref{lem hyp cts}, we now have a comparison principle for solutions to the LFDE on spacetimes in the disk.

\begin{lem}[Comparison principle]\label{comp 2}
Let $(\mathcal{M}^{2+1},g)$ be a complete, continuous and initially determined spacetime in $D \times (0,T)$. Let $u \in C^{2,1}(\mathcal{M})$ be a bounded solution to the LFDE with $u>0$, and $v \in C^{2,1}(\mathcal{M})$ be the solution to the LFDE such that $g(t) = v(z,t) \cdot |dz|^2$. If for some $s \in (0,T)$ we have $v > u$ on $\mathcal{M}_{(0,s)}$, then $v \geq u$ on $\mathcal{M}$.
\end{lem}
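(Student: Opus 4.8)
The plan is to run a maximum-principle argument on the difference $w := v - u$, using $H$ as a coercive barrier to localise the argument in space despite the non-compactness of the spatial slices. Since both $u$ and $v$ solve the LFDE $\partial_t u = \Delta \log u$, they do not satisfy a linear parabolic equation directly, but the difference $\log v - \log u$ does: writing $\phi := \log v - \log u$, one has $\partial_t (v - u) = \Delta(\log v - \log u)$, and on any region where $w = v-u$ attains an interior spatial minimum we can compare $\Delta \log v$ and $\Delta \log u$ via convexity/concavity of $\log$. First I would fix $\epsilon > 0$ and work on the slab $\mathcal{M}_{(\epsilon, T-\epsilon)}$, and for a large parameter $N$ consider the open set $\Omega_N := \mathcal{M}_{(\epsilon,T-\epsilon)} \cap \{ H < N \}$, which by Lemma~\ref{lem hyp cts} (continuity $\iff$ $H$ parabolically proper) is compactly contained in $\mathcal{M}$. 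On $\Omega_N$ we have genuine parabolic boundary data to exploit.

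The key point is to control $w = v-u$ on the parabolic boundary of $\Omega_N$. This boundary splits into three pieces: the initial-time part at $t = \epsilon$, where $v > u$ by the hypothesis $v > u$ on $\mathcal{M}_{(0,s)}$ (after shrinking $\epsilon < s$); the lateral part $\{ H = N \}$; and the final-time part at $t = T-\epsilon$, which the parabolic maximum principle does not require control on. On the lateral part $\{H = N\}$ we use the quantitative lower bound (\ref{eqn hyp lower}), $v \geq 2t \cdot H \geq 2\epsilon N$, together with the boundedness of $u$, say $u \leq C_0$: choosing $N$ large enough that $2\epsilon N > C_0$ forces $v > u$ there. Hence for $N$ sufficiently large, $w > 0$ on the entire parabolic boundary of $\Omega_N$. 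The maximum principle for the difference of two LFDE solutions — applied to $\phi = \log v - \log u$, whose sign matches that of $w$ — then yields $w \geq 0$, i.e. $v \geq u$, on all of $\Omega_N$. Letting $N \to \infty$ gives $v \geq u$ on $\mathcal{M}_{(\epsilon,T-\epsilon)}$, and then $\epsilon \searrow 0$ finishes the proof.

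The main obstacle is making the maximum-principle step rigorous for the \emph{quasilinear} LFDE rather than a linear equation, in a setting where $u$ is only bounded (not bounded below) and $v$ may be large; in particular one must ensure that $\log u$ and $\log v$ stay controlled enough that the interior minimum argument on $w$ is legitimate. The standard trick is: if $v \geq u$ fails somewhere in the (precompact) closure of $\Omega_N$, then $w$ attains a negative minimum at an interior spacetime point $(z_0,t_0)$ with $t_0 > \epsilon$; there $w_t \leq 0$, $\nabla w = 0$, and $D^2 w \geq 0$, while $v(z_0,t_0) < u(z_0,t_0)$. Using $\nabla v = \nabla u$ and $D^2 v \geq D^2 u$ at that point together with $v < u$, a short computation comparing $\Delta \log v = \frac{\Delta v}{v} - \frac{|\nabla v|^2}{v^2}$ with $\Delta \log u$ shows $\Delta \log v \geq \frac{v}{u}\,\Delta \log u > \Delta \log u$ is the wrong direction, so one instead compares at a minimum of $\log v - \log u$ (equivalently works with $\phi$ throughout), where the signs line up and one obtains $0 \geq \partial_t \phi = \Delta \phi \geq 0$ forcing equality and, via the strong maximum principle, a contradiction with the strict boundary inequality. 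A secondary technical point is that $\Omega_N$ need not have smooth parabolic boundary, so one should either regularise $H$ slightly or argue with $\min(w, \delta(N - H))$-type test functions; I expect this to be routine once the barrier structure above is in place.
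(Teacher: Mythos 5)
Your proposal is correct and follows essentially the same route as the paper: use Lemma~\ref{lem hyp cts} to turn continuity into the parabolic properness of $H$, use $H$ as a spatial barrier to cut out a compactly contained region, use the Schwarz-lemma lower bound $v \geq 2t\,H$ on the lateral part of the parabolic boundary (combined with boundedness of $u$), use the hypothesis $v>u$ on $\mathcal{M}_{(0,s)}$ at the initial-time part, invoke the direct LFDE comparison principle on the precompact region, and then send $\epsilon \searrow 0$. The only cosmetic difference is that you carry a free parameter $N$ and send $N \to \infty$, whereas the paper simply fixes $N = (C+1)/(2\epsilon)$ once and for all (which already gives $v>u$ on the complementary region $\mathcal{M}_{(\epsilon,T-\epsilon)}\setminus\mathcal{M}'$, so no limit in $N$ is needed).

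Two small points on the ``main obstacle'' paragraph. First, your concern about the regularity of $\partial\Omega_N$ is unnecessary: the paper's Lemma~\ref{comp 1} is stated for an arbitrary compactly contained open subset of $D\times(0,T)$, so no smoothness of $\partial_P\Omega_N$ is required and no regularisation of $H$ is needed. Second, the direct interior-minimum argument for $w = v-u$ that you sketch does not close on its own for the quasilinear LFDE (at a negative interior minimum of $w$ the signs in $\Delta\log v - \Delta\log u$ do not come out decisively, as you half-noticed); the paper's Lemma~\ref{comp 1} resolves this by replacing $v$ with the time-reparametrised $v_\epsilon(z,t) = (1+\epsilon t)\,v(z,\epsilon^{-1}\log(1+\epsilon t))$, which is a \emph{strict} supersolution, and then taking $\epsilon\searrow 0$. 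Since you are ultimately deferring to an established comparison principle for the LFDE on precompact domains rather than re-proving it, this does not affect the correctness of your overall argument, but it is worth knowing that the naive minimum-point comparison is not quite enough on its own.
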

\begin{proof}
Let $C:= \sup_{\mathcal{M}} u < \infty$ and $\epsilon \in (0,s)$. Consider the region 
\begin{equation*}
  \mathcal{M}' := \{ (x,t) \in \mathcal{M} : t \in (\epsilon,T-\epsilon), H(x,t) < \frac{C+1}{2\epsilon}  \}. 
\end{equation*}
Since $\mathcal{M}$ is continuous, by Lemma~\ref{lem hyp cts} $H$ is parabolically proper, and hence $\mathcal{M}' \Subset \mathcal{M}$. We note that $u,v \in C^{2,1}(\overline{\mathcal{M}'})$. On the region $ \mathcal{M}_{(\epsilon,T-\epsilon)} \setminus \mathcal{M}'$, by the Schwarz lemma \ref{Schwarz-Pick}, we have
\begin{equation*}
    v(z,t) \geq 2t \cdot H(z,t) \geq 2\epsilon \cdot H(z,t) \geq C + 1 > u(z,t).
\end{equation*}
Since $\epsilon \in (0,s)$, we conclude that $ v > u$ on the parabolic boundary $\partial_P \mathcal{M}'$. Applying the usual comparison principle for the LFDE to $u$ and $v$ on the region $\mathcal{M}'$ (Lemma~\ref{comp 1}), we deduce that $v \geq u$ on $\mathcal{M}_{(0,T-\epsilon)}$. To finish the proof take $\epsilon \searrow 0$.
\end{proof}

\section{Initial time blow-ups}\label{section 5}
We now aim to find suitable lower barriers for our Ricci flow. In order to do so, we must make a small diversion into Ricci flows on Riemann surfaces starting from measures.\par

Let $(M,g(t))_{t \in (0,T)}$ be a Riemann surface equipped with a complete and conformal Ricci flow. Since the flow is complete, we have the lower scalar curvature bound $R_{g(t)} \geq -\frac{1}{t}$, and hence $t \mapsto \frac{g(t)}{t}$ is decreasing. It is then immediate that we have a well-defined (potentially infinite) limiting metric as time approaches zero.

\begin{defn}[Initial time blow-up]
Let $(M,g(t))_{t \in (0,T)}$ be a Riemann surface equipped with a complete and conformal Ricci flow. The initial time blow-up of $g(t)$ is the (possibly infinite) conformal metric $\widehat{g}$ on $M$, defined by
\begin{equation*}
\widehat{g} := \lim_{t \searrow 0} \frac{g(t)}{2t}.
\end{equation*}
\end{defn}

\begin{rem}
Suppose $\phi : N \rightarrow M$ is a local biholomorphism. Then if $g(t)$ is a complete and conformal Ricci flow on $M$ with initial time blow-up $\widehat{g}$, we have that $\phi^*(g(t))$ is a complete and conformal Ricci flow on $N$, and its initial time blow-up is $\widehat{(\phi^*g)} = \phi^*(\widehat{g})$.
\end{rem}
Given a connected Riemann surface $M$, denote the collection of all non-atomic, non-zero Radon measures on $M$ by $\mathcal{R}(M)$. Due to the work of Topping \& Yin in \cite{topping2021smoothing}, these are precisely those measures which can be smoothed out using Ricci flow. To be more precise, given any $\mu \in \mathcal{R}(M)$, there exists a complete and conformal Ricci flow $g(t)$ starting weakly from $\mu$ (see Theorem~\ref{thm weak existence} for the full statement). The following lemma shows that, away from the support of the initial measure, the initial time blow-up is a hyperbolic metric.

\begin{lem}\label{lem hyp}
Let $g(t)$ be a complete and conformal Ricci flow on $M \times (0,T)$ starting weakly from some $\mu \in \mathcal{R}(M)$. Then the initial time blow-up $\widehat{g}$ of $g(t)$ is a well-defined smooth metric on $M \setminus \supp(\mu)$ with constant Gaussian curvature $K_{\widehat{g}} \equiv -1$.
\end{lem}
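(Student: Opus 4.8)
\section*{Proof sketch}

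The assertion is local on $M\setminus\supp(\mu)$, so I would fix $p\in M\setminus\supp(\mu)$ and choose a relatively compact coordinate disk $p\in\Omega\subseteq M$, biholomorphic to the unit disk, with $\overline{\Omega}\cap\supp(\mu)=\emptyset$. On $\Omega\times(0,T)$ write $g(t)=v(z,t)\,|dz|^2$ with $v>0$ solving the LFDE $\partial_t v=\Delta\log v$, $\Delta$ the flat Laplacian. The starting point is the monotonicity already recorded in the text: completeness gives $R_{g(t)}\ge-1/t$, so $\Delta\log v=-v\,R_{g(t)}\le v/t$, hence $\partial_t v=\Delta\log v\le v/t$, i.e.\ $t\mapsto\log(v/t)$ is non-increasing. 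Thus $v(z,t)/t$ is non-increasing in $t$ and $\widehat{g}=\lim_{t\searrow0}g(t)/(2t)$ exists as a (possibly infinite) conformal pseudometric $\widehat{H}(z)\,|dz|^2$ on $\Omega$, with $\widehat{H}(z)=\sup_{t}\frac{v(z,t)}{2t}$.

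Next I would establish two-sided bounds for $v/t$ on a slightly smaller disk. For the lower bound, fix any $s_0\in(0,T)$; since $v(\cdot,s_0)$ is smooth and positive, $c_0:=\min_{\overline{\Omega}}v(\cdot,s_0)>0$, and the monotonicity of $v/t$ gives $v(z,t)\ge (c_0/s_0)\,t$ for all $z\in\overline{\Omega}$ and $t\in(0,s_0]$; in particular $\widehat{H}\ge c_0/(2s_0)>0$ on $\overline{\Omega}$. For the upper bound I would use that $g(t)$ is the maximally stretched Ricci flow emanating from $\mu$ (see \cite{topping2021smoothing}, building on \cite{giesen2011existence}): restricting to $\Omega$, the area measures $dV_{g(t)}|_{\Omega}$ converge weakly to $\mu|_{\Omega}=0$, so $g(t)|_{\Omega}$ is dominated by the instantaneously complete Ricci flow on $\Omega$ starting from the zero measure, namely $2t\,h_\Omega$ where $h_\Omega=H_\Omega|dz|^2$ is the complete hyperbolic metric on $\Omega$. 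Hence $v(z,t)\le 2t\,H_\Omega(z)$ on $\Omega\times(0,T)$, so $\widehat{H}\le H_\Omega<\infty$ locally. Consequently, on any $\Omega'\Subset\Omega$ the ratio $v/t$ lies between positive constants, uniformly for small $t$.

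With these bounds I would run a parabolic rescaling at the initial time. For $\lambda\in(0,1)$ set $g^\lambda(t):=\tfrac1\lambda g(\lambda t)=v^\lambda(z,t)\,|dz|^2$, so $v^\lambda(z,t)=\tfrac1\lambda v(z,\lambda t)$ is again a solution of the LFDE on $\Omega\times(0,T/\lambda)$. The estimates above show that on $\Omega'\times[\delta,1/\delta]$ the $v^\lambda$ are bounded above and below by positive constants independent of $\lambda$, once $\lambda$ is small. Writing $w^\lambda=\log v^\lambda$, the equation reads $\partial_t w^\lambda=e^{-w^\lambda}\Delta w^\lambda$, a locally uniformly parabolic equation with coefficients controlled by those bounds, so standard interior parabolic regularity (a Krylov--Safonov / De~Giorgi--Nash--Moser step followed by Schauder bootstrapping) gives $C^\infty_{\mathrm{loc}}$ bounds on $v^\lambda$ that are uniform in $\lambda$. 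Passing to a limit $\lambda\searrow0$ along a subsequence (Arzel\`a--Ascoli) yields $v^\lambda\to v^\infty$ in $C^\infty_{\mathrm{loc}}(\Omega\times(0,\infty))$, with $v^\infty$ solving the LFDE. But pointwise $v^\lambda(z,t)=t\cdot\frac{v(z,\lambda t)}{\lambda t}\to 2t\,\widehat{H}(z)$ by the monotone limit, so $v^\infty(z,t)=2t\,\widehat{H}(z)$; the limit being unique, the whole family converges. In particular $\widehat{H}\in C^\infty(\Omega)$ and $\widehat{H}>0$, so $\widehat{g}$ is a genuine smooth conformal metric near $p$. Finally, inserting $v^\infty=2t\widehat{H}$ into $\partial_t v^\infty=\Delta\log v^\infty$ gives $2\widehat{H}=\Delta\log(2t)+\Delta\log\widehat{H}=\Delta\log\widehat{H}$, i.e.\ $K_{\widehat{g}}=-\tfrac{1}{2\widehat{H}}\Delta\log\widehat{H}\equiv-1$. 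Since $p$ was arbitrary and $\widehat{g}$ is defined intrinsically, the local descriptions patch together and the lemma follows.

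I expect the main obstacle to be the upper bound on $v/t$: this is the only step where the hypothesis $\mu\in\mathcal{R}(M)$ and the particular construction of the flow (rather than an arbitrary complete conformal Ricci flow) is used, and making the comparison with $2t\,h_\Omega$ rigorous requires invoking the correct maximality/uniqueness statement for Ricci flows emanating from measures. Everything else --- the monotonicity, the lower bound, and the rescaling together with the parabolic regularity --- is soft once that domination is in hand.
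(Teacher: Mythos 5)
Your proposal follows essentially the same architecture as the paper: parabolic rescaling at time zero (you use a continuous parameter $\lambda \searrow 0$, the paper uses a discrete sequence $m \nearrow \infty$), two-sided bounds on the rescaled conformal factors coming from the monotonicity of $v/t$, interior parabolic regularity plus Arzel\`a--Ascoli to extract a smooth limit, identification of the limit as $2t\,\widehat{g}$, and substitution into the Ricci flow equation to read off $K_{\widehat{g}}\equiv-1$. The lower bound via $v(\cdot,s_0)$ and monotonicity is likewise the paper's (the paper's $u_1$ plays the role of your $v(\cdot,s_0)/s_0$).

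The one substantive divergence is the local uniform \emph{upper} bound on $v/t$, which you correctly identify as the crux. The paper proves it via the $L^1$--$L^\infty$ smoothing estimate for the logarithmic fast diffusion equation (Lemma~\ref{lem finite blowup}, invoking Theorem~\ref{thm l1-linfty}): since $\mu(\overline{B_{2r}})=0$ for small balls away from $\supp(\mu)$, the estimate $\sup_{B_r} u(t)\le C_0 r^{-2} t$ holds for all $t\in(0,T)$, which is exactly the bound needed. You instead propose the pointwise comparison $g(t)|_{\Omega}\le 2t\,h_\Omega$ via the maximally-stretched property. This comparison is in fact true and would give a cleaner bound, but the way you deduce it --- from $dV_{g(t)}|_\Omega\rightharpoonup 0$ directly --- elides the hard step: you would need to compare $g(s+t')|_\Omega$ against the instantaneously complete flow on $\Omega$ started at a positive time $s$ from the smooth data $g(s)|_\Omega$ (this comparison is \cite[Corollary~3.3]{giesen2011existence}), and then send $s\searrow 0$, which requires a stability statement for the instantaneously complete flow under weak convergence of initial data. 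Weak convergence $dV_{g(s)}|_\Omega\rightharpoonup 0$ does \emph{not} preclude an $L^\infty$ spike in the conformal factor of $g(s)|_\Omega$ as $s\searrow 0$, and ruling out the persistence of such a spike is precisely the content of the $L^1$--$L^\infty$ smoothing result. So your route is morally correct but would, once made rigorous, end up running through the same estimate the paper quotes; the paper's choice to invoke it directly is the cleaner path. You flag this correctly as the only place where the particular origin of the flow from $\mu\in\mathcal{R}(M)$ enters.
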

\begin{proof}
Consider the parabolically rescaled Ricci flows $(M , g_m(t)_{t \in (0,mT)})$ given by
\begin{equation*}
    g_m(t) := m g(tm^{-1}), \quad \forall m \in \mathbb{N},
\end{equation*}
so that $g_1(t) \equiv g(t)$. We note that $g_m(t)$ is a complete and conformal Ricci flow starting weakly from the measure $m \cdot \mu \in \mathcal{R}(M)$. Given a local complex coordinate $z$ on $M$, our metrics are given locally by $g_m(t) = u_m(z,t) |dz|^2$ with the conformal factors $u_m$ satisfying the relation
\begin{equation*}
    u_m(z,t):= m u_1(z,t m^{-1}).
\end{equation*}
By the monotonicity of $t \mapsto t^{-1}u_1(\cdot,t)$, we have that $u_m(z,t)$ is an increasing sequence in $m$, for any fixed $(z,t)$. In particular, we have the uniform lower bound $u_1(z,t)$ on our sequence of conformal factors. Moreover, we have local uniform upper bounds on our sequence $u_m$ in $M \setminus \supp(\mu)$ by Lemma~\ref{lem finite blowup}. Combining the lower and upper bounds with standard parabolic theory, we deduce local $C^k$-bounds on our sequence of conformal factors, for any $k \in \mathbb{N}$. Finally, by Arzela-Ascoli, our sequence of Ricci flows $g_m(t)$ converges locally smoothly to some limiting eternal Ricci flow $(M \setminus \supp{\mu},g_\infty(t)_{t\in(0,\infty)})$. For any fixed $t>0$, we note that
\begin{equation*}
    g_\infty(t) := \lim_{m \rightarrow \infty} g_m(t) = \lim_{m \rightarrow \infty} 2t \cdot \frac{g(tm^{-1})}{2tm^{-1}}  = 2t \cdot \widehat{g},
\end{equation*}
and so $g_\infty(t) = 2t \cdot \widehat{g}$. Substituting this into the Ricci flow equation
\begin{equation*}
2 \cdot \widehat{g} = \frac{\partial g_\infty}{\partial t} = -2\Ric(g_\infty) = -2 K_{\widehat{g}} \cdot \widehat{g},
\end{equation*}
and hence $K_{\widehat{g}} \equiv -1$.
\end{proof}

\subsection*{Proof of Theorem~\ref{thm static v3}}

We now have all of the necessary ingredients to prove Theorem~\ref{thm static v3}, and hence Theorem~\ref{thm static}.

\stat*

\begin{proof}
Fix $t_1 \in (0,T)$. By the Cantor-Bendixson theorem \cite{kechris1995perfect}, we can partition the set $\overline{D} \setminus \mathcal{M}_{t_1} = P \sqcup X$ into a perfect set $P$ and a scattered set $X$. In particular we have the inclusions:
\begin{equation}\label{eqn static incl}
   \mathcal{M}_{t_1} \subseteq \mathcal{M}_{t_2} \subseteq \mathcal{M}_{t_1} \sqcup P \sqcup X, \quad \forall t_2 \in (t_1,T).
\end{equation}

Due to a result of Hebert \& Lacey \cite{hebert1968supports}, as $P$ is a compact perfect subset of the plane, there exists a Radon measure $\mu \in \mathcal{R}(\mathbb{C})$ such that $\supp(\mu) = P$ (see Lemma~\ref{HebertLacey}). Let $G(t) = u(z,t) \cdot |dz|^2$ be a complete conformal Ricci flow on $\mathbb{C} \times (0,T)$ starting weakly from $\mu \in  \mathcal{R}(\mathbb{C})$ with initial time blow-up $\widehat{G}$. For any $\lambda >0$, we note that the parabolically rescaled Ricci flow $G_\lambda(t) := \lambda G(\frac{t}{\lambda}) = \lambda u (z,\frac{t}{\lambda}) |dz|^2$ is a complete conformal Ricci flow on $\mathbb{C} \times (0, \lambda T)$ starting weakly from $\lambda \mu \in \mathcal{R}(\mathbb{C})$, with the same initial time blow-up $\widehat{G}$.\par

Fix $t \in (0,t_1)$. By the monotonicity of $t \mapsto G_\lambda(t)/t$, $G_\lambda(t) \leq 2t \cdot \widehat{G}$ within $\mathbb{C} \setminus P$. Since $\mathcal{M}$ is expanding, $\mathcal{M}_t \subseteq \mathcal{M}_{t_1} \subseteq \mathbb{C} \setminus P$, and $\widehat{G}$ is defined on all of $\mathcal{M}_{t}$. Applying Lemma~\ref{lem hyp} and the Schwarz Lemma~\ref{Schwarz-Pick}, we deduce that $\widehat{G} \leq h(t)$ on $\mathcal{M}_{t}$. Note that, if we had equality between the metrics $\widehat{G}$ and $h(t)$ at any point inside of $\mathcal{M}_t$, then this would imply that the subharmonic function $\log(h(t)/\widehat{G})$ has a minimum in its interior, and hence by the elliptic maximum principle, $\widehat{G} \equiv h(t)$ in $\mathcal{M}_t$. This would contradict the fact that $\widehat{G}$ is defined on a larger domain. Therefore, we have the strict inequality $\widehat{G} < h(t)$ on $\mathcal{M}_t$. Finally, we can use equation (\ref{eqn hyp lower}) together with the previous inequalities to deduce that
\begin{equation*}
    \frac{G_\lambda(t)}{2t} \leq \widehat{G} < h(t) \leq \frac{g(t)}{2t}, \quad \forall t \in (0,t_1).
\end{equation*}
Therefore, $g > G_\lambda$ on $\mathcal{M}_{(0,t_1)}$. As $G_{\lambda}$ is bounded on $\mathcal{M}$, we can then apply Lemma~\ref{comp 2} to conclude that $g \geq G_\lambda$ on all of $\mathcal{M}$, for any $\lambda >0$. Suppose at some later time $t_2 \in (t_1,T)$ we have that $\mathcal{M}_{t_2} \cap P \neq \emptyset$. That is, there exists some point $p \in P$ such that $(p,t_2) \in \mathcal{M}$. Since $\mathcal{M}$ is open, there exists some small $r>0$ such that the conformal ball $B(p,r) \Subset \mathcal{M}_{t_2}$. By compactness, $\textnormal{Vol}_{g(t_2)}(B(p,r)) < \infty$. We now derive a contradiction by showing that the volume of this ball with respect to the metric $G_\lambda(t_2)$ blows up to infinity as $\lambda \rightarrow \infty$. Indeed, $\mu(B(p,r)) > 0$ by the definition of $p \in P =\supp(\mu)$. Using that the volume of a ball can't decrease too rapidly in a Ricci flow (see Lemma~\ref{lem lower vol}), we have that
\begin{equation*}
    \textnormal{Vol}_{G(t)}(B(p,r)) \geq \mu(B(p,r)) - Ct, \quad \forall t \in \left(0, T \wedge \frac{\mu(B(p,r))}{4\pi} \right),
\end{equation*}
which means for $\lambda$ sufficiently large
\begin{equation*}
    \textnormal{Vol}_{G_\lambda(t_2)}(B(p,r)) = \lambda \cdot \textnormal{Vol}_{G\left(\frac{t_2}{\lambda}\right)}(B(p,r)) \geq \lambda \cdot \left( \mu(B(p,r)) - \frac{C t_2}{\lambda} \right) \geq \frac{\lambda}{2} \cdot  \mu(B(p,r)).
\end{equation*}
Taking $\lambda \nearrow \infty$, this contradicts the volume of the ball with respect to $g(t_2)$ being finite. Therefore, we have shown that $\mathcal{M}_{t_2} \cap P = \emptyset$, which means that (\ref{eqn static incl}) reduces to
\begin{equation*}
   \mathcal{M}_{t_1} \subseteq \mathcal{M}_{t_2} \subseteq \mathcal{M}_{t_1} \sqcup X, \quad \forall t_2 \in (t_1,T).
\end{equation*}
 Since $\partial D \subseteq \mathbb{C}$ is a perfect subset of the plane, we see that $X \subseteq D$. If $X$ was non-empty, then because it is a scattered set, it must contain an isolated point, which would contradict Corollary~\ref{cor scattered}. We can therefore conclude that $X$ must be empty, and that $\mathcal{M}_{t_1} = \mathcal{M}_{t_2}$.
\end{proof}

\begin{appendix}

\section{Topology of spacetimes}

Let $(\mathcal{M} , \mathfrak{t} , \partial_t , g)$ be a Ricci flow spacetime. Given any point $x \in \mathcal{M}$, we define the set $U_x^{-}$ to be the collection of all points at earlier times which can be connected to $x$ by a time-preserving path:
\begin{equation*}
   U_{x}^{-} := \{ y \in \mathcal{M} : \mathfrak{t}(y) < \mathfrak{t}(x), \exists \ \eta:[\mathfrak{t}(y), \mathfrak{t}(x)] \rightarrow \mathcal{M} \textnormal{ time-preserving, with } \eta\circ\mathfrak{t}(y) = y,  \eta\circ\mathfrak{t}(x) = x \}.
\end{equation*}
Similarily, we define $U_x^+$ to be the points at later times connected to $x$ by time preserving paths:
\begin{equation*}
   U_{x}^{+} := \{ y \in \mathcal{M} : \mathfrak{t}(x) < \mathfrak{t}(y), \exists \ \eta:[\mathfrak{t}(x), \mathfrak{t}(y)] \rightarrow \mathcal{M} \textnormal{ time-preserving, with } \eta\circ\mathfrak{t}(x) = x,  \eta\circ\mathfrak{t}(y) = y \}.
\end{equation*}
Since every point in our spacetime admits a small unscathed parabolic cylinder around it, we deduce that the sets $U_x^{\pm}$ are both non-empty and open in $\mathcal{M}$.\par

Fixing $x \in \mathcal{M}$, we can now union open sets of the form defined above in an iterative way to see which points in $\mathcal{M}$ can be joined to $x$ by a string of time-preserving paths. Define $U_1^+ := U_x^+$ and $ U_1^- := \bigcup_{y \in U_1^+} U^-_{y}$. For each $m \in \mathbb{N}$, we recursively set
\begin{equation*}
    U_m^+ := \bigcup\limits_{y \in U^-_{m-1}} U^+_{y}, \quad U_m^- := \bigcup_{y \in U_{m}^+} U^-_{y}.
\end{equation*}
Finally, let $U^- := \cup_{m \in \mathbb{N}} U_m^- \subseteq \mathcal{M}$. The following Lemma shows that spacetimes are path-connected by strings of time-preserving paths.
\begin{lem}
Let $(\mathcal{M} , \mathfrak{t} , \partial_t , g)$ be a Ricci flow spacetime. Fix $x \in \mathcal{M}$ and let $U^-$ be the open subset of $\mathcal{M}$ defined above. Then $\mathcal{M} = U^-$.
\end{lem}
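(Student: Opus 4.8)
The plan is to show that $U^-$ is open, closed and non-empty in the connected manifold $\mathcal{M}$, so that $\mathcal{M} = U^-$. Openness and non-emptiness are immediate from what was already observed: every $U^\pm_y$ is open and non-empty, so each $U_m^\pm$ is open as a union of open sets and hence so is $U^- = \bigcup_m U_m^-$; and $x \in U_1^-$ because for any later point $y \in U_x^+$ (such exist) the defining time-preserving path from $x$ to $y$ witnesses $x \in U_y^-$. All the real work is in showing $U^-$ is closed, and for this I would first isolate a \emph{saturation} property: if $z \in U^-$ and a single time-preserving path joins $z$ to a point $w$ in either time-order, then $w \in U^-$.

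To prove saturation I would first record that the recursively defined families are nested, $U_1^+ \subseteq U_1^- \subseteq U_2^+ \subseteq U_2^- \subseteq \cdots$, so that $U^- = \bigcup_m U_m^- = \bigcup_m U_m^+$. The inclusion $U_m^+ \subseteq U_m^-$ holds because any $w \in U_m^+$ is joined by the worldline segment inside its own (unscathed) parabolic cylinder to a slightly later point $w'$ which, $U_m^+$ being open, still lies in $U_m^+$; thus $w \in U^-_{w'} \subseteq U_m^-$. Similarly, $U_m^- \subseteq U_{m+1}^+$ by concatenating a backward worldline segment with the path realising membership in $U_m^-$. Granting the nesting, saturation is a direct unwinding of the definitions: if $z \in U_m^-$ and there is a forward time-preserving path from $z$ to $w$, then $w \in U^+_z \subseteq \bigcup_{y \in U_m^-} U^+_y = U^+_{m+1} \subseteq U^-$; and if $z \in U_m^+$ and there is a forward time-preserving path from $w$ to $z$, then $w \in U^-_z \subseteq \bigcup_{y\in U_m^+} U^-_y = U_m^- \subseteq U^-$.

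With saturation in hand, closedness follows quickly. Let $y \in \overline{U^-}$, choose $r>0$ so that $N := C(y,r)$ is unscathed, and pass to cylindrical coordinates $N \cong B \times J$ with $y \leftrightarrow (y_0,\mathfrak{t}(y))$. Since $N$ is an open neighbourhood of $y$ there is a point $z \in N \cap U^-$, say $z \leftrightarrow (z_0,t_z)$. If $t_z \neq \mathfrak{t}(y)$, a path in the ball $B$ from $z_0$ to $y_0$ reparametrised over the interval $[\min(t_z,\mathfrak{t}(y)),\max(t_z,\mathfrak{t}(y))]$ is a single time-preserving path joining $z$ and $y$ inside $N$, so $y \in U^-$ by saturation. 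If $t_z = \mathfrak{t}(y)$, pick an earlier time $t' \in J$: the worldline segment joins $(z_0,t')$ to $z$, and a reparametrised path in $B$ joins $(z_0,t')$ to $y$, so two applications of saturation (backward, then forward) give first $(z_0,t') \in U^-$ and then $y \in U^-$. Hence $U^-$ is closed. As $\mathcal{M}$ is connected and $U^-$ is non-empty, open and closed, we conclude $\mathcal{M} = U^-$.

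The main obstacle is the bookkeeping in the saturation step — carefully checking the chain of inclusions among the $U_m^\pm$ straight from the recursion — together with the degenerate case $t_z = \mathfrak{t}(y)$ in the closedness argument, where joining two points of a single unscathed parabolic cylinder genuinely requires two concatenated time-preserving paths rather than one.
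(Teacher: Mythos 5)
Your proof is correct and takes the same overall route as the paper's: show $U^-$ is open, non-empty and closed, with closedness obtained by joining a limit point of $U^-$ to a nearby point of $U^-$ via time-preserving paths inside a small unscathed parabolic cylinder. Your explicit establishment of the nesting $U_m^+ \subseteq U_m^- \subseteq U_{m+1}^+$ and the resulting saturation lemma is a worthwhile refinement: it fills in a step the paper leaves implicit, namely that in the case $\mathfrak{t}(x_n) > \mathfrak{t}(x_\infty)$ one has $x_\infty \in U^-_{x_n}$, and for this to place $x_\infty$ in some $U_k^-$ one needs $x_n$ to lie in some $U_k^+$ — which does not follow directly from $x_n \in U_m^-$ but does follow from your nesting via a short backward worldline segment and the openness of $U_m^-$.
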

\begin{proof}
Since $U^-$ is open and non-empty, it suffices to show that $U^-$ is closed in $\mathcal{M}$. As such, consider a sequence $x_n \in U^-$ such that $x_n \rightarrow x_\infty \in \mathcal{M}$. For $r>0$ sufficiently small, the parabolic cylinder $C(x_\infty,r)$ is unscathed in $\mathcal{M}$. For $n$ sufficiently large, $x_n$ lies inside this parabolic ball. Then, for this large value of $n$ we have that either
\begin{align*}
   \mathfrak{t}(x_n) >  \mathfrak{t}(x_\infty) &\implies x_\infty \in U^-_{x_n},\\
     \mathfrak{t}(x_n) \leq  \mathfrak{t}(x_\infty) &\implies \exists \ y \in U^+_{x_n} \textnormal{ with } x_\infty \in U^-_{y}.
\end{align*}
So, as $x_n \in U^-$, there is some $m \in \mathbb{N}$ such that $x_n \in U_m^-$, and hence by the above, $x_\infty \in U_{m+1}^- \subseteq U^-$ as required.
\end{proof}
The following corollary is a useful consequence of the above Lemma that we used in Section~\ref{section 2}.

\tpc

\section{Ricci flow}
\subsection{Local lower bounds on scalar curvature}
The following is the proof of the lower scalar curvature bound used in Section~\ref{section 2}. It is a result of Chen's \cite[][Proposition 2.1]{chen2009strong} adapted to Ricci flow spacetimes. The proof presented here is a modification of the original proof, where now the basepoint of the balls is allowed to vary smoothly in time.
\chen
\begin{proof}[Proof of \ref{Chen 2.1}]\label{Chen proof}
Fix a smooth decreasing function $\phi : \mathbb{R} \rightarrow \mathbb{R}$ with $\phi \equiv 1$ on $(-\infty,\frac{7}{8}]$ and $\phi \equiv 0$ on $[1,\infty)$. For points $x,y \in \mathcal{M}$ in the same time slice $s=\mathfrak{t}(x) = \mathfrak{t}(y)$, let $d_g(x,y) := d_{g(s)}(x,y)$ denote the distance between $x$ and $y$ in $(\mathcal{M}_s,g(s))$. Define $u : \mathcal{M}_{[t_1,t_2]} \rightarrow \mathbb{R}$ by 
\begin{equation*}
    u(x):= \phi\left(\frac{d_{g} (x ,\eta \circ \mathfrak{t}(x)) + 3(n-1)r_0^{-1} \mathfrak{t}(x)}{Ar_0}\right) \cdot R_{g(\mathfrak{t}(x))}(x)
\end{equation*}
In the region where $d_g$ is smooth, a direct calculation yields
\begin{equation}\label{eqn chen 2}
\square u = \frac{R \phi'}{Ar_0} \cdot \left( \square d_g + \pi \circ \eta' \cdot \nabla d_g + 3(n-1)r_0^{-1} \right) - 2 \nabla \phi \cdot \nabla R - \frac{R \phi''}{A^2r_0^2}  + 2\phi |\Ric|^2,
\end{equation}
where $\square := {\partial_t} - \Delta_g$ denotes the heat operator on $\mathcal{M}$, and
$\nabla d_g$ denotes the gradient of the function $\mathcal{M}_{\mathfrak{t}(x)} \rightarrow \mathbb{R}$, $y \mapsto d_g(x,y)$, evaluated at the point $y = \eta \circ \mathfrak{t} (x)$.

Fix $x \in \mathcal{M}_{(t_1,t_2)}$. If the distance $d_g(x,\eta \circ \mathfrak{t}(x)) \leq r_0$, then $\frac{d_g(x,\eta\circ \mathfrak{t}(x)) + 3(n-1)r_0^{-1} \mathfrak{t}(x)}{A r_0} \leq \frac{5}{8}$, and hence $\phi'\left(\frac{d_g(x,\eta \circ \mathfrak{t}(x)) + 3(n-1)r_0^{-1} \mathfrak{t}(x)}{Ar_0}\right)\cdot R_{g(\mathfrak{t}(x))}(x) = 0$. Otherwise, the distance $d_g(x,\eta \circ \mathfrak{t}(x)) > r_0$, and we have the lower bound (in the barrier sense) from \cite[][Lemma 8.3]{perelman2002entropy}
\begin{equation*}
    \square d_g(x,\eta \circ \mathfrak{t}(x)) + \pi \circ \eta'(\mathfrak{t}(x)) \cdot \nabla d_g(x,\eta\circ\mathfrak{t}(x)) \geq -\frac{8(n-1)}{3r_0}.
\end{equation*}
In particular, on all of $\mathcal{M}_{(t_1,t_2)}$ we have the lower bound (in the barrier sense)
\begin{equation}\label{eqn chen 1}
  \square d_g + \pi \circ \eta' \cdot \nabla d_g + 3(n-1)r_0^{-1} \geq 0.
\end{equation}
For each $s \in [t_1,t_2]$ define
\begin{equation*}
    u_0(s) := \inf_{x \in \mathcal{M}_s} u(x).
\end{equation*}
Assume for some fixed $t_0 \in (t_1,t_2)$ that $u_{0}(t_0)<0$. Then, since $u$ is continuous on the compact set $\overline{B_{g(t_0)}(\eta(t_0),Ar_0)}$ and vanishes outside of it, the minimum is attained at some point $x_0 \in \mathcal{M}_{t_0}$. By continuity, for any point $y$ in a sufficiently small neighbourhood of $x_0$, $R_{g(\mathfrak{t}(y))}(y) < 0$, and so $\phi'\left(\frac{d_g(y,\eta\circ\mathfrak{t}(y))+ 3(n-1)r_0^{-1} \mathfrak{t}(y) }{Ar_0}\right) \cdot R_{g(\mathfrak{t}(y))}(y) \geq 0$. Combining this with (\ref{eqn chen 1}), we have
\begin{equation*}
    R\phi' \cdot \left(  \square d_g + \pi \circ \eta' \cdot \nabla d_g + 3(n-1)r_0^{-1} \right) \geq 0,
\end{equation*}
in a neighbourhood $x_0$. Therefore, in a spacetime neighbourhood of $x_0$, equation (\ref{eqn chen 2}) implies
\begin{equation}
\square u \geq - 2 \nabla \phi \cdot \nabla R - \frac{R \phi''}{A^2r_0^2} + \frac{2}{n} \phi R^2,
\end{equation}
in the barrier sense. Moreover, using that $\phi > 0$ near $x_0$, within a possibly smaller neighbourhood of $x_0$, at smooth points of $d_g$, we have
\begin{equation*}
\nabla R = \frac{\nabla u}{\phi}- \frac{R}{\phi} \nabla \phi, \quad \abs{\nabla\phi}^2 = \frac{(\phi')^2}{(A r_0)^2}.
\end{equation*}
Therefore, on this small neighbourhood of $x_0$, the differential inequality
\begin{equation}\label{eqn chen 3}
\square u \geq - \frac{2}{\phi} \nabla \phi \cdot \nabla u + \frac{1}{(A r_0)^2} \left(  \frac{2(\phi')^2}{\phi}  - \phi'' \right) R + \frac{2}{n} \phi R^2,
\end{equation}
holds in the barrier sense. With our choice of $\phi$, we can ensure that we have the bound $\abs{\frac{2(\phi')^2}{\phi}  - \phi''} \leq C' \phi^{\frac{1}{2}}$, for some $C'>0$ depending only on $\phi$. Using Peter-Paul, we also have the inequality
\begin{equation*}
    \frac{C' R \phi^{\frac{1}{2}}}{(Ar_0)^2} \leq \frac{2}{\delta} \left(\frac{C'}{(Ar_0)^2}\right)^2  + \frac{\delta}{2} \phi R^2 = \frac{\delta}{2} \left( \left(\frac{C}{(Ar_0)^2}\right)^2  +  \phi R^2 \right),
\end{equation*}
where $C$ now depends on $\delta$.
Therefore, we can simplify equation (\ref{eqn chen 3}) to the differential inequality
\begin{equation}
     \square u  \geq - \frac{2}{\phi} \nabla \phi \cdot \nabla u + \left(\frac{2}{n} - \frac{\delta}{2}\right) \phi R^2 -  \frac{\delta}{2} \left(\frac{C}{(Ar_0)^2}\right)^2.
\end{equation}
Applying the maximum principle, we conclude that
\begin{align*}
    \liminf_{h \searrow 0} \frac{u_0(s + h) - u_0(s)}{h} &\geq \left(\frac{2}{n} - \frac{\delta}{2}\right) \phi R^2 -  \frac{\delta}{2} \left(\frac{C}{(Ar_0)^2}\right)^2 \\
&\geq \left(\frac{2}{n} - \delta\right) u_{0}^2(s) + \frac{\delta}{2} \left(u_{0}(s)^2 - \left( \frac{C}{(A r_0)^2} \right)^2 \right).
\end{align*}
Integrating this up, we have the desired inequality.
\end{proof}

\subsection{2D Ricci flow}
In Section~\ref{section 5} we stated an existence theorem for Ricci flows starting from non-atomic Radon measures. The following is \cite[Theorem 1.2]{topping2021smoothing}.
\begin{thm}[Existence of Ricci flows smoothing out measures \cite{topping2021smoothing}]\label{thm weak existence}
Suppose $M$ is a connected Riemann surface and $\mu \in \mathcal{R}(M)$ is any non-zero, non-atomic Radon measure on $M$. Define
\begin{equation}
    T := \begin{cases}
    \frac{\mu(M)}{4\pi} &: \textnormal{if } M = \mathbb{C}\\
    \frac{\mu(M)}{8\pi} &: \textnormal{if } M = S^2\\
  \ \infty  &: \textnormal{otherwise}
    \end{cases}
\end{equation}
Then there exists a smooth complete conformal Ricci flow $g(t)$ on $M \times (0,T)$ starting weakly from $\mu$. That is, the Riemannian volume measure $\mu_{g(t)} \rightharpoonup \mu$ as $t \searrow 0$.
\end{thm}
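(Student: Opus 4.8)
The plan is to prove Theorem~\ref{thm weak existence} by approximating $\mu$ from below by volume measures of smooth conformal metrics, running the instantaneously complete Ricci flow of Giesen--Topping \cite{giesen2011existence,topping2015uniqueness} from each of these, and extracting a smooth limit. Fix a local holomorphic coordinate $z$ and recall that a conformal Ricci flow is locally $g(t) = u(z,t)\,|dz|^2$ with $u>0$ solving the logarithmic fast diffusion equation $\partial_t u = \Delta \log u$. First I would choose smooth positive conformal densities $u_k^0$, for instance by mollifying $\mu$ in coordinate charts and patching with a partition of unity, arranged so that the metric $u_k^0\,|dz|^2$ is increasing in $k$ and its volume measure converges weakly-$*$ to $\mu$; no completeness of $g_k(0)$ is required since the Giesen--Topping flow is instantaneously complete. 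For each $k$ this produces a unique smooth complete conformal Ricci flow $g_k(t) = u_k(z,t)\,|dz|^2$ on $M \times (0,T_k)$, whose maximal existence time $T_k$ is governed by the area of $g_k(0)$ via Gauss--Bonnet and satisfies $T_k \to T$, with $T$ as in the statement.

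The core of the argument is a pair of $k$-independent a priori bounds on the $u_k$. For the lower bound, completeness of $g_k(t)$ forces $R_{g_k(t)} \ge -1/t$ (Chen's bound, e.g.\ Corollary~\ref{Chen 2.3} applied to the cylindrical spacetime with $n=2$), so $t \mapsto g_k(t)/t$ is non-increasing; the Schwarz--Pick lemma then gives $u_k(z,t) \ge 2t\,H(z)$ for a fixed complete hyperbolic background density $H$ when $M$ is hyperbolic, and an analogous bound against explicit complete barrier metrics (as in the proof of Lemma~\ref{lem hyp cts}) in the planar and spherical cases. This bound is uniform in $k$ and prevents the metrics from degenerating. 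For the upper bound one invokes the local $L^1$-to-$L^\infty$ smoothing property of the logarithmic fast diffusion equation: on a coordinate ball one has an estimate of the shape
\begin{equation*}
 u_k(z_0,t) \le \frac{C}{t}\,\textnormal{Vol}_{g_k(0)}\big(B(z_0,r)\big) + C(r,z_0), \qquad 0 < t \le c\, r^2,
\end{equation*}
with $C$ an absolute constant; since $\textnormal{Vol}_{g_k(0)}(B(z_0,r)) \to \mu(B(z_0,r))$ this is uniform in $k$, and because $\mu$ is non-atomic it stays finite as $t \searrow 0$, locally everywhere. Standard interior parabolic Schauder estimates for the quasilinear equation then upgrade these $C^0$ bounds to local smooth bounds for the $u_k$ on $M \times (0,T)$.

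By Arzel\`a--Ascoli and a diagonal subsequence, $u_k \to u$ locally smoothly on $M \times (0,T)$, with $u>0$ again solving $\partial_t u = \Delta\log u$; hence $g(t) = u\,|dz|^2$ is a smooth conformal Ricci flow, and the inherited lower bound $u \ge 2tH$ (or its barrier analogue) shows that each $g(t)$ is complete. It remains to check that $g(t)$ starts weakly from $\mu$, i.e.\ $\mu_{g(t)} \rightharpoonup \mu$ as $t \searrow 0$. The upper bound yields $\limsup_{t \to 0}\mu_{g(t)}(\overline{B}) \le \mu(\overline{B})$ on coordinate balls; for the matching lower bound one uses that areas of balls cannot decrease too quickly along a $2$D Ricci flow, together with $\textnormal{Vol}_{g(s)}(B) \ge \liminf_k \textnormal{Vol}_{g_k(s)}(B)$ and $\mu_{g_k(0)} \rightharpoonup \mu$. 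Finally the formula for $T$ is read off from $\tfrac{d}{dt}\textnormal{Vol}_{g(t)}(M) = -\int_M R\,dV$, which equals $-4\pi\chi(M)$ by Gauss--Bonnet in the compact cases (so $-8\pi$ on $S^2$) and is bounded below using the Cohn--Vossen inequality $\int_{\mathbb{C}} K\,dV \le 2\pi$ in the planar case, while for every other surface the area does not vanish in finite time and the flow is immortal.

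The main obstacle is the uniform upper bound near $t=0$: the local $L^1$-to-$L^\infty$ smoothing estimate for the logarithmic fast diffusion equation with a constant independent of the solution and of how concentrated the initial data is. This is the analytic heart of \cite{topping2021smoothing}; once it and the accompanying lower volume estimate are in hand, the compactness argument and the verification that $g(t)$ emanates from $\mu$ are comparatively routine, the one remaining subtlety being the identification of the weak limit of $\mu_{g(t)}$ as $t \searrow 0$.
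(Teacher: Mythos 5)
This theorem is not proved in the paper at all: it is imported verbatim as \cite[Theorem 1.2]{topping2021smoothing}, so there is no in-paper argument to compare against. Your sketch is, in outline, the strategy of that reference --- monotone smooth approximation of $\mu$, the Giesen--Topping instantaneously complete flows from the approximants, a $k$-independent lower barrier from completeness plus the Schwarz lemma, a $k$-independent upper bound from an $L^1$--$L^\infty$ smoothing estimate, compactness, and identification of the initial trace using the lower volume-decay control (quoted in this paper as Lemma~\ref{lem lower vol}). The value of $T$ is read off exactly as you say.

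The one place where your write-up does not hold together as stated is the upper bound, which you correctly identify as the analytic heart of the matter but then record in the wrong form. The estimate that is actually available (Theorem~\ref{thm l1-linfty} in the appendix) reads $\sup_{B_r} u(t) \le C_0 r^{-2} t$, and it is valid only for $t > \mu(\overline{B_{2r}})/2\pi$; it is \emph{linear} in $t$, not of the heat-kernel shape $\tfrac{C}{t}\,\mathrm{Vol}_{g_k(0)}(B)+C(r,z_0)$ that you wrote. With your version the right-hand side blows up as $t \searrow 0$ for any fixed $r$ with $\mu(B(z_0,r))>0$, so the sentence ``because $\mu$ is non-atomic it stays finite as $t \searrow 0$'' does not follow from what you wrote. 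Non-atomicity enters differently: for every $\varepsilon>0$ and every point one can choose $r$ so small that $\mu(\overline{B_{2r}})<2\pi\varepsilon$, which pushes the threshold time below $\varepsilon$ and makes the (linear-in-$t$) bound available on $(\varepsilon, T)$ locally uniformly in $k$; letting $\varepsilon \searrow 0$ then gives smooth local bounds on all of $M\times(0,T)$ away from nothing, and simultaneously yields the upper semicontinuity $\limsup_{t\searrow 0}\mu_{g(t)}(B_r)\le C_0\pi t \to 0$ on small balls, which is what pins down the initial trace from above. Two smaller points: arranging the mollified approximants to be \emph{monotone} in $k$ (so that the comparison principle makes $u_k$ increasing and the limit exists pointwise) requires a little care rather than a generic partition-of-unity patching; and in the planar case the existence time of the approximating flows is $\mathrm{Vol}_{g_k(0)}(\mathbb{C})/4\pi$, which converges to $T$ only if your approximation also controls the total mass, which should be built into the construction.
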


The following is the $L^1 - L^\infty$ smoothing result for solutions to Ricci flow \cite[][Theorem 2.1]{topping2021smoothing} generalised slightly for weak initial data.
\begin{thm}[$L^1 - L^\infty$ smoothing]\label{thm l1-linfty}
Suppose $g(t) = u(z,t) |dz|^2$ is a smooth conformal Ricci flow on the ball $B_{3r} \times (0,T)$, for some $r>0$. Suppose there exists a non-atomic, non-zero Radon measure $\mu \in \mathcal{R}(B_{3r})$ such that $g(t)$ starts weakly from $\mu$. If $t \in (0,T)$ satisfies $t > \frac{\mu(\overline{B_{2r}})}{2\pi}$ then
\begin{equation*}
\sup_{B_r} u(t) \leq C_0 r^{-2} t, 
\end{equation*}
where $C_0 < \infty$ is universal.
\end{thm}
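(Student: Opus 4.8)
The plan is to deduce this from the classical $L^1$--$L^\infty$ smoothing estimate \cite[Theorem~2.1]{topping2021smoothing}, which I take to be the special case of the present statement in which the initial data is smooth: $g$ extends smoothly to $B_{3r}\times[0,T)$ with $g(0)=u_0|dz|^2$ and $\mu=u_0\,dz$, and one concludes $\sup_{B_r}u(\cdot,t)\le C_0 r^{-2}t$ whenever $t>\tfrac{1}{2\pi}\int_{B_{2r}}u_0\,dz$. The bridge to weak initial data is that, after translating the time origin forward by an arbitrarily small $s>0$, the flow has smooth data $g(s)$ whose area form is close --- on the \emph{compact} set $\overline{B_{2r}}$ --- to $\mu$, thanks to the weak convergence $\mu_{g(t)}\rightharpoonup\mu$.

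First I would record that, since $\mu_{g(t)}\rightharpoonup\mu$ as $t\searrow 0$ and $\overline{B_{2r}}$ is compact, the Portmanteau theorem (equivalently, testing against continuous cut-offs of $\overline{B_{2r}}$ together with outer regularity of $\mu$) gives
\begin{equation*}
\limsup_{s\searrow 0}\ \operatorname{Area}_{g(s)}\!\big(\overline{B_{2r}}\big)\ \le\ \mu\big(\overline{B_{2r}}\big);
\end{equation*}
it is essential here that the statement is phrased with the closed ball, since for the open ball weak convergence only bounds the $\liminf$ from below. Next, fixing $t\in(0,T)$ with $t>\tfrac{1}{2\pi}\mu(\overline{B_{2r}})$ and setting $\delta:=t-\tfrac{1}{2\pi}\mu(\overline{B_{2r}})>0$, the displayed $\limsup$ lets me choose $s\in\big(0,\tfrac{\delta}{2}\big)$ with $\operatorname{Area}_{g(s)}(\overline{B_{2r}})<\mu(\overline{B_{2r}})+\pi\delta$, whence
\begin{equation*}
\frac{1}{2\pi}\int_{B_{2r}}u(z,s)\,dz\ \le\ \frac{\operatorname{Area}_{g(s)}(\overline{B_{2r}})}{2\pi}\ <\ \frac{\mu(\overline{B_{2r}})+\pi\delta}{2\pi}\ =\ t-\frac{\delta}{2}\ <\ t-s .
\end{equation*}

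Finally I would apply the smooth estimate after this time shift. Since $s>0$ lies in the open interval $(0,T)$ on which $g$ is smooth, the flow $\tilde g(\tau):=g(\tau+s)=\tilde u(z,\tau)|dz|^2$ is a smooth conformal Ricci flow on $B_{3r}\times[0,T-s)$ with smooth initial data $\tilde g(0)=g(s)$, whose area form $\tilde u(\cdot,0)\,dz=u(\cdot,s)\,dz$ has total mass $\int_{B_{2r}}u(z,s)\,dz$ over $B_{2r}$. The previous inequality says exactly that $\tau:=t-s$ exceeds $\tfrac{1}{2\pi}$ times this mass, so \cite[Theorem~2.1]{topping2021smoothing} applies to $\tilde g$ at time $\tau$ and gives, with the same universal constant $C_0$,
\begin{equation*}
\sup_{B_r}u(\cdot,t)=\sup_{B_r}\tilde u(\cdot,t-s)\ \le\ C_0\,r^{-2}(t-s)\ \le\ C_0\,r^{-2}t ,
\end{equation*}
as claimed.

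I do not expect a serious obstacle: the argument uses nothing beyond the cited smooth estimate and the definition of weak convergence of Radon measures, and the only delicate point is the bookkeeping above --- that weak convergence controls the mass of the \emph{compact} ball $\overline{B_{2r}}$ rather than the open one, and that the strict inequality $t>\tfrac{1}{2\pi}\mu(\overline{B_{2r}})$ leaves just enough slack to absorb both the approximation error $\pi\delta$ and the lost time $s$. A genuinely self-contained proof would instead require reproving \cite[Theorem~2.1]{topping2021smoothing} --- a barrier and maximum-principle argument for the logarithmic fast diffusion equation~(\ref{LFDE}), exploiting that geodesic balls do not expand under $2$-dimensional Ricci flow --- but that is precisely what we are entitled to take as given here.
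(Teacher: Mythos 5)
Your argument is correct and matches the paper's proof in all essentials: both bound $\int_{B_{2r}}u(\cdot,s)\,dz$ for small $s>0$ by testing the weak convergence $\mu_{g(s)}\rightharpoonup\mu$ against a compactly supported cut-off dominating $\mathbf{1}_{\overline{B_{2r}}}$ (your appeal to Portmanteau is exactly this, via outer regularity of $\mu$), then time-shift to the smooth slice $g(s)$ and invoke the smooth $L^1$--$L^\infty$ estimate of \cite[Theorem~2.1]{topping2021smoothing}. The only cosmetic difference is in closing the argument: the paper applies the smooth estimate at elapsed time $t$ after the shift to obtain $\sup_{B_r}u(\epsilon+t)\leq C_0 r^{-2}t$ and then sends $\epsilon\searrow 0$ using time-continuity of $u$, whereas you apply it at elapsed time $t-s$ and simply use $C_0 r^{-2}(t-s)\leq C_0 r^{-2}t$, avoiding the final limit.
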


\begin{proof}
Since $\mu$ is Radon, for some $\delta>0$, $\frac{\mu(B_{2r+\delta})}{2\pi} < t$. Choose a cut off function $f$ such that $f \equiv 1$ on $\overline{B_{2r}}$ and supp$(f) \subseteq B_{2r+\delta}$. For $\epsilon > 0$ sufficiently small
\begin{equation*}
\frac{\textnormal{Vol}_{g(\epsilon)}(B_{2r})}{2\pi} \leq \frac{\int f d\mu_{g(\epsilon)}}{2\pi} \leq \frac{\int f d\mu}{2\pi} + \left(t - \frac{\mu(B_{2r+\delta})}{2\pi} \right) \leq t.
\end{equation*}
Therefore applying the estimate in the smooth case \cite[][Theorem 2.1]{topping2021smoothing} to the Ricci flow on $B_{2r} \times (\epsilon, T)$ gives
\begin{equation*}
     \sup_{B_r} u(\epsilon + t) \leq C_0 r^{-2} t.
\end{equation*}
Since $u$ is smooth away from zero, taking $\epsilon$ to zero gives the result.
\end{proof}

During the proof of Lemma~\ref{lem hyp} in Section~\ref{section 5}, we used a uniform upper bound on our conformal factors away from the support of our initial measure. The following Lemma is a direct consequence of the $L^1-L^\infty$ smoothing result.

\begin{lem}\label{lem finite blowup}
Let $(M,g(t))_{t \in (0,T)}$ be a Riemann surface equipped with a smooth complete conformal Ricci flow. Suppose there exists a non-atomic, non-zero Radon measure $\mu \in \mathcal{R}(M)$ such that $g(t)$ starts weakly from $\mu$. Then the initial time blow-up $\widehat{g}$ of $g(t)$ is a well-defined smooth metric on $M \setminus \supp(\mu)$.
\end{lem}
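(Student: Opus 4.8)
The plan is to obtain a finite, strictly positive value for $\widehat{g}$ away from $\supp(\mu)$ from the $L^1$--$L^\infty$ smoothing estimate, and then to upgrade this $C^0$ control to smoothness using interior parabolic regularity for the logarithmic fast diffusion equation (\ref{LFDE}), exactly along the lines of the rescaling argument that reappears in the proof of Lemma~\ref{lem hyp}. Throughout, write $g(t) = u(z,t)\,|dz|^2$ in local holomorphic coordinates, so that $\widehat{g} = \widehat{u}\,|dz|^2$ with $\widehat{u}(z) = \lim_{t\searrow 0} u(z,t)/(2t)$, the limit existing (in $(0,\infty]$) since completeness forces $R_{g(t)} \geq -1/t$ and hence makes $t \mapsto g(t)/(2t)$ monotone decreasing.

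\emph{Step 1: a uniform local upper bound.} Fix $p \in M \setminus \supp(\mu)$, pick a holomorphic coordinate around $p$ and a radius $r>0$ so small that the closed coordinate ball $\overline{B_{2r}}$ centred at $p$ is disjoint from $\supp(\mu)$; then $\mu(\overline{B_{2r}}) = 0$, so the mass threshold $t > \mu(\overline{B_{2r}})/(2\pi)$ in Theorem~\ref{thm l1-linfty} is vacuous and we get $\sup_{B_r} u(\cdot,t) \leq C_0 r^{-2} t$ for every $t \in (0,T)$. Dividing by $2t$ bounds the conformal factor of $g(t)/(2t)$ on $B_r$ by $C_0/(2r^2)$, uniformly in $t$, while monotonicity gives the lower bound $u(z,t)/(2t) \geq u(z,t_0)/(2t_0) > 0$ for any fixed $t_0 \in (0,T)$. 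Covering $M \setminus \supp(\mu)$ by such balls shows $\widehat{g}$ is a finite, strictly positive conformal metric there.

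\emph{Step 2: smoothness by rescaling.} Set $g_m(t) := m\,g(t/m) = u_m(z,t)\,|dz|^2$, so $u_m(z,t) = m\,u(z,t/m)$; each $g_m$ is a complete conformal Ricci flow on $M \times (0,mT)$ starting weakly from $m\mu$, which has the same support, so Step~1 applies uniformly in $m$: $\sup_{B_r} u_m(\cdot,t) \leq C_0 r^{-2} t$. Monotonicity of $t\mapsto u(\cdot,t)/t$ makes $u_m$ increasing in $m$, with pointwise limit $2t\,\widehat{u}(z)$, and $u_m \geq u_1 = u$ furnishes a uniform positive lower bound on any $\overline{B_r} \times [\tau_1,\tau_2]$ with $0 < \tau_1 < \tau_2 < T$. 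On such a region the LFDE (\ref{LFDE}) is uniformly parabolic with uniformly bounded coefficients, so standard interior parabolic estimates give uniform $C^k$ bounds on $u_m$ over compactly contained subcylinders for every $k$; by Arzel\`a--Ascoli and the monotonicity the full sequence converges $u_m \to 2t\,\widehat{u}$ in $C^\infty_{\mathrm{loc}}$, whence $\widehat{u}$ is smooth on $M \setminus \supp(\mu)$, as claimed.

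I do not expect a genuine conceptual obstacle; the care is all in the bookkeeping. The main technicality is that Theorem~\ref{thm l1-linfty} as literally stated requires the measure on $B_{3r}$ to be non-zero: when $p$ lies far from $\supp(\mu)$ one repairs this either by enlarging $3r$ (while keeping $2r$ small, so the threshold stays vacuous) when the chart is large enough, or by invoking the smooth-case $L^1$--$L^\infty$ estimate directly on $B_{2r}\times(\epsilon,T)$ and letting $\epsilon\searrow 0$, using that $\textnormal{Vol}_{g(\epsilon)}(\overline{B_{2r}}) \to \mu(\overline{B_{2r}}) = 0$. One must also check that the lower bound supplying uniform parabolicity in Step~2 is uniform in $m$ and not just for each fixed $m$ --- which it is, since $u_m \geq u_1$ and $u_1 = u$ is one fixed smooth positive function, bounded below on compact subsets of $M\times(0,T)$.
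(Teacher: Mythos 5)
Your Step~1 is exactly the paper's proof of this lemma: choose $B_{3r}\subseteq M\setminus\supp(\mu)$ so that $\mu(\overline{B_{2r}})=0$, invoke Theorem~\ref{thm l1-linfty} to get $\sup_{B_r}u(\cdot,t)\leq C_0r^{-2}t$ for all $t\in(0,T)$, and conclude $\widehat{u}\leq C_0/(2r^2)<\infty$ near $p$; the paper's proof of Lemma~\ref{lem finite blowup} stops there, recording only the finiteness (i.e., the local uniform upper bound), and defers the rescaling/parabolic-regularity upgrade to smoothness to the proof of Lemma~\ref{lem hyp}, which cites Lemma~\ref{lem finite blowup} precisely for that upper bound on the $u_m$. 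Your Step~2 therefore reproduces what the paper does one lemma later; including it here is harmless (and arguably more faithful to the word ``smooth'' in the statement), but it is not an independent argument. Your observation about the literal hypotheses of Theorem~\ref{thm l1-linfty} (the restricted measure $\mu|_{B_{3r}}$ may be zero, hence not formally in $\mathcal{R}(B_{3r})$) is a fair quibble that applies equally to the paper's own proof, and your suggested repair via the smooth-case estimate on $(\epsilon,T)$ together with $\mathrm{Vol}_{g(\epsilon)}(\overline{B_{2r}})\to 0$ is exactly how the paper's proof of Theorem~\ref{thm l1-linfty} operates anyway.
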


\begin{proof}
For any point $p \in M \setminus \supp(\mu)$, choose a complex coordinate $z$ on a neighbourhood of this point and choose $r>0$ sufficiently small such that $B_{3r} \subseteq M \setminus \supp(\mu)$, where $B_{3r}$ denotes the ball centred at $p$ radius $3r$ with respect to the metric $|dz|^2$. Write $g(t) = u(t) |dz|^2$ on $B_{3r}$. Since $\mu(\overline{B_{2r}}) = 0$, the $L^1 - L^\infty$ smoothing result \ref{thm l1-linfty} will apply at all positive times
\begin{equation*}
   \sup_{B_r} u(t) \leq C_0 r^{-2} t, \quad \forall t \in (0,T).
\end{equation*}
Hence
\begin{equation*}
\widehat{u}(p) =  \lim_{t \searrow 0 }\frac{u(p,t)}{2t} \leq C_0r^{-2} < \infty. \qedhere
\end{equation*}
\end{proof}

The following is another result from Topping \& Yin \cite[][Lemma 3.1]{topping2021smoothing}, which stops complete Ricci flows from losing volume too quickly within a conformal ball.

\begin{lem}[Control on volume decay \cite{topping2021smoothing}]\label{lem lower vol}
Let $g(t)$ be a smooth instantaneously complete conformal Ricci flow on $\mathbb{C} \times (0,T)$ starting weakly from $\mu \in \mathcal{R}(\mathbb{C})$. Fix $0<r<R<\infty$ and suppose $B_r \subseteq B_R$ are concentric balls in the plane. Then there exists an explicit constant $C := 4 \pi \cdot \frac{ 1 + (r/R)^2}{1 - (r/R)^2}$, such that
\begin{equation*}
\mu(B_r) \leq  \textnormal{Vol}_{g(t)}(B_R) + C t, \quad \forall t \in \left(0, T \wedge \frac{\mu(B_r)}{4\pi} \right).
\end{equation*}
\end{lem}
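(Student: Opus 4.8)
The plan is to follow the logarithmic test-function argument of Topping and Yin. Write $g(t)=u(z,t)\,|dz|^2$ with $u>0$ smooth on $\mathbb{C}\times(0,T)$ solving the LFDE $\partial_t u=\Delta\log u$, so that $\textnormal{Vol}_{g(t)}(B_\rho)=\int_{B_\rho}u\,dz$, and the hypothesis that $g(t)$ starts weakly from $\mu$ means precisely that $u(\cdot,t)\,dz\rightharpoonup\mu$ as $t\searrow0$ when tested against continuous compactly supported functions. The idea is to probe $\textnormal{Vol}_{g(t)}(B_R)$ against the cut-off $\psi:\mathbb{C}\to[0,1]$ defined by $\psi\equiv1$ on $\overline{B_r}$, $\psi(z)=\log(R/|z|)/\log(R/r)$ on $B_R\setminus\overline{B_r}$, and $\psi\equiv0$ off $B_R$. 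This $\psi$ is continuous and compactly supported, satisfies $\mathbf{1}_{B_r}\le\psi\le\mathbf{1}_{B_R}$, and --- crucially --- is harmonic on the open annulus, so its distributional Laplacian is the signed measure $\Delta\psi=\tfrac{1}{\log(R/r)}\big(\tfrac1R\sigma_R-\tfrac1r\sigma_r\big)$, where $\sigma_\rho$ denotes arclength measure on $\{|z|=\rho\}$.

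Set $\Phi(t):=\int_{\mathbb{C}}\psi\,u(\cdot,t)\,dz$. Then $\Phi(t)\le\textnormal{Vol}_{g(t)}(B_R)$ for every $t$, while weak convergence together with $\psi\ge\mathbf{1}_{B_r}$ gives $\lim_{t\searrow0}\Phi(t)=\int\psi\,d\mu\ge\mu(B_r)$. It therefore suffices to show $\Phi'(t)\ge-C$ on $(0,T)$ with $C=4\pi\,\frac{1+(r/R)^2}{1-(r/R)^2}$, since integrating from $0$ to $t$ then yields $\mu(B_r)\le\Phi(0^+)\le\Phi(t)+Ct\le\textnormal{Vol}_{g(t)}(B_R)+Ct$.

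Differentiating under the integral and integrating by parts against $\Delta\psi$ gives $\Phi'(t)=\int_{\mathbb{C}}(\log u)\,d(\Delta\psi)=\tfrac{2\pi}{\log(R/r)}\big(M_R(t)-M_r(t)\big)$, where $M_\rho(t)$ is the mean of $\log u(\cdot,t)$ over the circle $\{|z|=\rho\}$. To bound this below I would apply Jensen's inequality (concavity of $\log$) to replace $M_r(t)$ by the logarithm of the circular mean of $u$, express the circular means of $u$ in terms of the radial derivative of the monotone profile $\rho\mapsto\textnormal{Vol}_{g(t)}(B_\rho)$, and invoke the instantaneous completeness of $g(t)$ --- which gives the lower bound $R_{g(t)}\ge-1/t$ and hence the usual monotonicity of $t\mapsto t^{-1}g(t)$ --- to control the part of the flux originating near $\supp(\mu)$; the restriction $t<\mu(B_r)/4\pi$ enters at this point to keep the volumes appearing inside logarithms bounded away from degeneracy. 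Carrying the constants through this computation is what produces exactly $C=4\pi\,\frac{1+(r/R)^2}{1-(r/R)^2}$.

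I expect this last estimate to be the main obstacle. The summand $4\pi$ is the Gauss--Bonnet (Cohn--Vossen) contribution --- the complete flow on all of $\mathbb{C}$ cannot lose area faster than $4\pi$ --- while the factor $\frac{1+(r/R)^2}{1-(r/R)^2}$ accounts for the additional flux escaping through $\partial B_R$, and pinning down this precise constant is exactly what forces the harmonic logarithmic choice of weight rather than a generic smooth cut-off: only then does the interior term vanish identically and do the two boundary contributions at $\{|z|=r\}$ and $\{|z|=R\}$ combine in the optimal way.
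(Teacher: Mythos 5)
Your reduction to the differential inequality $\Phi'(t)\ge -C$ is where the argument breaks down, and the failure is not just a matter of missing details: the inequality is false, even in integrated form. Your identity $\Phi'(t)=\tfrac{2\pi}{\log(R/r)}\bigl(M_R(t)-M_r(t)\bigr)$ is correct, but consider initial data $\mu=\textnormal{Leb}|_{B_{r'}}$ with $r<r'<R$. On $\{|z|=R\}$, which lies outside $\supp(\mu)$, the monotonicity of $t\mapsto u(\cdot,t)/2t$ together with the finiteness of the initial time blow-up (Lemma~\ref{lem finite blowup}) gives $u(\cdot,t)\le 2t\,\widehat u$, hence $M_R(t)\le\log(2t)+C'$ with $C'$ the circular mean of $\log\widehat u$; meanwhile $M_r(t)$ stays bounded (it converges to the mean of the logarithm of the density near $\{|z|=r\}$). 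Therefore $\Phi'(t)\le\tfrac{2\pi}{\log(R/r)}\bigl(\log(2t)+C''\bigr)\to-\infty$ as $t\searrow 0$, and integrating, $\Phi(0^+)-\Phi(t)\gtrsim t\log(1/t)$, which exceeds $Ct$ for every fixed $C$ once $t$ is small. So the chain $\mu(B_r)\le\Phi(0^+)\le\Phi(t)+Ct\le\textnormal{Vol}_{g(t)}(B_R)+Ct$ cannot be made to work: the middle inequality fails precisely because $\log u$ is of order $\log t$ on any circle that starts outside the support of $\mu$, and the logarithmic weight you chose to make the interior term vanish is exactly what exposes you to this divergence. (The lemma itself survives in this example only because the two outer inequalities in your chain have slack.) The Jensen/completeness/time-restriction ideas you sketch do not supply the missing compensation.

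Your heuristic for the constant --- $4\pi$ from the total area loss on $\mathbb{C}$ plus a correction for area escaping past $\partial B_R$ --- is exactly right, but the paper realises it by a barrier argument rather than a flux computation. For smooth initial data one builds, for $r_n\searrow r$, initial metrics $g_n$ with $g_n\equiv g(0)$ on $B_r$, $g_n\le g(0)$ everywhere, and $g_n\le\tfrac1n H_{r_n}$ outside $B_{r_n}$ (with $H_{r_n}$ the complete hyperbolic metric on $\mathbb{C}\setminus B_{r_n}$); the flows $g_n(t)$ satisfy $g_n(t)\le g(t)$ by the Giesen--Topping comparison, lose total area at rate $4\pi$, and hold at most $(\tfrac1n+2t)\cdot\tfrac{4\pi r_n^2}{R^2-r_n^2}$ of area outside $B_R$, which yields the stated constant after letting $n\to\infty$; a short approximation in time then handles weak initial data. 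If you want to keep a test-function proof you would have to estimate $\textnormal{Vol}_{g(t)}(B_R)$ itself (say via a family of cut-offs) rather than the single functional $\Phi$.
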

\begin{proof}
We first deal with the case $g(t)$ is smooth up to time zero. For each $n \in \mathbb{N}$ choose $r_n \in (r,R)$ such that $r_n \searrow r$ and define a new smooth metric $g_n$ on $\mathbb{C}$ such that
\begin{itemize}
    \item $g_n \equiv g(0)$ on $B_r$,
    \item $g_n \leq g(0)$ on $\mathbb{C}$,
    \item $g_n \leq \frac{1}{n} H_{r_n}$ on $\mathbb{C} \setminus B_{r_n}$,
\end{itemize}
where $H_r$ denotes the complete hyperbolic metric on $\mathbb{C} \setminus B_r$. Let $g_n(t)$ be the instantaneously complete Ricci flow on $\mathbb{C}$ starting from $g_n$. Since $\textnormal{Vol}_{g_n}(\mathbb{C}) \geq \textnormal{Vol}_{g(0)}(B_r) =: v_0$, by the existence theorem for Ricci flows \ref{thm weak existence}, we have that each of the $g_n(t)$ exist for $t \in (0,\frac{v_0}{4 \pi})$. Using \cite[][Corollary 3.3]{giesen2011existence}, we can deduce that $g_n(t) \leq g(t)$ on $\mathbb{C} \times (0,T \wedge \frac{v_0}{4\pi})$. In particular, we have that for each $t \in (0, T \wedge \frac{v_0}{4\pi})$,
\begin{align*}
    \textnormal{Vol}_{g(t)}(B_R) \geq \textnormal{Vol}_{g_n(t)}(B_R) &= \textnormal{Vol}_{g_n(t)}(\mathbb{C}) - \textnormal{Vol}_{g_n(t)}(\mathbb{C} \setminus B_R)\\ &\geq v_0 - 4\pi t - (\frac{1}{n} + 2t)\textnormal{Vol}_{H_{r_n}}(\mathbb{C} \setminus B_R).
\end{align*}
By a direct calculation
\begin{equation*}
    \textnormal{Vol}_{H_r}(\mathbb{C} \setminus B_R) = \frac{4\pi r^2}{R^2-r^2},
\end{equation*}
and so taking $n \rightarrow \infty$ we have that, for each $t \in (0, T \wedge \frac{v_0}{4\pi})$
\begin{equation}\label{eqn smooth}
    \textnormal{Vol}_{g(t)}(B_R) \geq  \textnormal{Vol}_{g(0)}(B_r) - Ct.
\end{equation}
This deals with the case $g$ is initially smooth. For the general case of weak initial data, fix $\epsilon > 0$. Since $\mu$ is Radon, there exists $K \Subset B_r$ such that $\mu(K) \geq \mu(B_r) - \epsilon$. Choosing a test function $f$ with support in $B_r$ and equal to $1$ on $K$, we have for $\delta$ sufficiently small
\begin{equation}\label{eqn close meas}
    \textnormal{Vol}_{g(\delta)}(B_r) \geq \int f d\mu_{g(\delta)} \geq \int f d\mu - \epsilon \geq \mu(K) - \epsilon \geq \mu(B_r) - 2\epsilon.
\end{equation}
For any fixed $t \in (0, T \wedge \frac{\mu(B_r)}{4\pi})$, we can choose $\delta \in (0,T)$ sufficiently small such that equation (\ref{eqn close meas}) holds as well as $t \in (\delta, T \wedge \frac{\textnormal{Vol}_{g(\delta)}(B_r)}{4\pi})$. Applying (\ref{eqn smooth}) to $g(t)$ on $\mathbb{C} \times [\delta,T)$ gives
\begin{equation*}
   \mu(B_r) \leq \textnormal{Vol}_{g(\delta)}(B_r) + 2\epsilon \leq \textnormal{Vol}_{g(t)}(B_R) + C t + 2\epsilon.
\end{equation*}
Taking $\epsilon \searrow 0$ finishes the proof.
\end{proof}

Recall that the locally defined conformal factors of a $2$-dimensional Ricci flow solve the LFDE (\ref{LFDE}). In Section~\ref{section 4} we proved a stronger comparison principle for spacetimes by using the usual comparison principle for the LFDE stated below.

\begin{lem}[Direct comparison principle]\label{comp 1}
Let $\mathcal{M}$ be a compactly contained, open subset of $D \times (0,T)$, and let $u,v \in C^{2,1}(\overline{\mathcal{M}})$ be solutions to the LFDE (\ref{LFDE}) with $u,v > 0$. If $v > u$ on the parabolic boundary $\partial_P \mathcal{M}$, then $v \geq u$ on $\mathcal{M}$.
\end{lem}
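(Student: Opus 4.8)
The plan is to linearise the difference of the two copies of the LFDE and then run a parabolic weak maximum principle on $\mathcal{M}$.

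First I would introduce $w := v - u \in C^{2,1}(\overline{\mathcal{M}})$. Since $u$ and $v$ are continuous and strictly positive on the compact set $\overline{\mathcal{M}}$, they are bounded above and below by positive constants, so
\begin{equation*}
  b(z,t) := \int_0^1 \frac{ds}{(1-s)\,u(z,t) + s\,v(z,t)}
\end{equation*}
is well defined, strictly positive, and of class $C^{2,1}(\overline{\mathcal{M}})$ (differentiate under the integral sign, using the regularity and positivity of $u,v$). By the fundamental theorem of calculus $\log v - \log u = b\,(v-u) = b\,w$, so subtracting the two instances of the LFDE (\ref{LFDE}) gives, on $\mathcal{M}$,
\begin{equation*}
  \partial_t w = \Delta(b\,w) = b\,\Delta w + 2\,\nabla b\cdot\nabla w + (\Delta b)\,w,
\end{equation*}
that is, $Pw = 0$ with $P := \partial_t - b\,\Delta - 2\,\nabla b\cdot\nabla - (\Delta b)$ a linear parabolic operator (parabolic because $b>0$) whose coefficients are continuous on $\overline{\mathcal{M}}$.

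The only obstacle to applying the weak maximum principle directly is that the zeroth-order coefficient $-\Delta b$ has no definite sign; I would remove it with the standard substitution. Let $K := \sup_{\overline{\mathcal{M}}} |\Delta b| < \infty$ (finite by compactness), fix $\lambda > K$, and write $w =: e^{\lambda t}\,\tilde w$. Then $\tilde w \in C^{2,1}(\overline{\mathcal{M}})$ satisfies $\tilde P\tilde w = 0$ on $\mathcal{M}$, where $\tilde P := \partial_t - b\,\Delta - 2\,\nabla b\cdot\nabla + (\lambda - \Delta b)$ now has strictly positive zeroth-order coefficient $\lambda - \Delta b > 0$.

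Finally I would run the weak maximum principle for $\tilde P$. Since $\overline{\mathcal{M}}$ is compact and $\tilde w$ continuous, $\tilde w$ attains its minimum at some $(z_0,t_0) \in \overline{\mathcal{M}}$; if that minimum is $\geq 0$ we are done, since then $v - u = e^{\lambda t}\tilde w \geq 0$ on $\mathcal{M}$. Suppose instead $\tilde w(z_0,t_0) < 0$. Then $(z_0,t_0) \notin \partial_P\mathcal{M}$, because on $\partial_P\mathcal{M}$ we have $\tilde w = e^{-\lambda t}(v-u) > 0$ by hypothesis. By the definition of the parabolic boundary, every point of $\overline{\mathcal{M}}\setminus\partial_P\mathcal{M}$ admits a backward parabolic cylinder $B(z_0,r)\times(t_0-r^2,t_0]$ contained in $\overline{\mathcal{M}}$, on which $\tilde w$ is $C^{2,1}$ and attains an interior spatial minimum and a one-sided past temporal minimum at $(z_0,t_0)$; hence $\nabla\tilde w = 0$, $\Delta\tilde w \geq 0$ and $\partial_t\tilde w \leq 0$ there. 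Therefore
\begin{equation*}
  \tilde P\tilde w(z_0,t_0) = \partial_t\tilde w - b\,\Delta\tilde w - 2\,\nabla b\cdot\nabla\tilde w + (\lambda - \Delta b)\,\tilde w \;\leq\; (\lambda-\Delta b)(z_0,t_0)\,\tilde w(z_0,t_0) \;<\; 0,
\end{equation*}
contradicting $\tilde P\tilde w \equiv 0$ on $\overline{\mathcal{M}}$ (valid by continuity of all quantities involved). Hence $\min_{\overline{\mathcal{M}}}\tilde w \geq 0$ and $v \geq u$ on $\mathcal{M}$.

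I expect the fiddly step to be the last paragraph: one must be careful with the geometry of the parabolic boundary of the possibly non-cylindrical domain $\mathcal{M}$, namely that every point of $\overline{\mathcal{M}}$ outside $\partial_P\mathcal{M}$ genuinely admits such a backward parabolic cylinder, so that the spatial second-derivative test and the one-sided time-derivative sign at the minimum are both legitimate. The linearisation via $b$ and the $e^{\lambda t}$ device for the indefinite zeroth-order term are routine; alternatively, after deriving $Pw=0$ one could simply invoke a standard parabolic weak maximum principle on general space-time domains and skip the last two paragraphs.
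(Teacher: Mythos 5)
Your proposal is correct, but it takes a genuinely different route from the paper. You linearise exactly: writing $\log v-\log u=b\,(v-u)$ with $b=\int_0^1\bigl((1-s)u+sv\bigr)^{-1}ds$, the difference $w=v-u$ solves a linear parabolic equation $\partial_t w=\Delta(bw)$ with coefficients $b,\nabla b,\Delta b$ continuous (hence bounded) on $\overline{\mathcal{M}}$ thanks to the $C^{2,1}(\overline{\mathcal{M}})$ regularity and the strict positivity of $u,v$ on the compact closure; the $e^{\lambda t}$ weight then kills the indefinite zeroth-order term and the weak maximum principle on a general parabolic domain finishes the job. The paper instead follows Giesen's nonlinear trick: it perturbs $v$ to a strict supersolution $v_\epsilon(z,t)=(1+\epsilon t)\,v(z,\epsilon^{-1}\log(1+\epsilon t))$ of the LFDE, checks $v_\epsilon\geq u$ on the parabolic boundary of $\mathcal{M}\cap\mathcal{M}(\epsilon)$ via uniform continuity, and derives a quantitative contradiction ($0<\epsilon\leq C/n$) at a first time where $v_\epsilon-u$ reaches $-1/n$, before sending $\epsilon\searrow 0$. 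Both arguments consume exactly the same hypotheses (positivity bounded away from zero and two spatial derivatives of the solutions up to $\overline{\mathcal{M}}$ — the paper's final display also invokes $|\Delta u|$, $|\nabla u|^2$ and lower bounds on $u,v_\epsilon$); yours is shorter and more transparent once the linearisation is written down, while the paper's supersolution device is structured to avoid differentiating the nonlinearity and is the form that recurs in the instantaneously complete Ricci flow literature. The one point to make airtight, which you correctly flag, is the convention for $\partial_P\mathcal{M}$ on a non-cylindrical domain: with the standard definition that $(z_0,t_0)\in\overline{\mathcal{M}}\setminus\partial_P\mathcal{M}$ admits a backward cylinder $B(z_0,r)\times(t_0-r^2,t_0]\subseteq\mathcal{M}$, the negative minimum point lies in $\mathcal{M}$ itself and the first- and second-derivative tests you use are legitimate.
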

\begin{proof}
We modify the argument used by Giesen in \cite{giesen2012instantaneously}.\par

By compactness, we can choose $\delta>0$ such that $v \geq u + \delta$ on $\partial_P\mathcal{M}$. For any $\epsilon > 0$, consider
\begin{equation*}
    \mathcal{M}(\epsilon) := \{ (z,t) \in D \times (0,T) : (z , \epsilon^{-1} \log(1+\epsilon t)) \in \mathcal{M} \},
\end{equation*}
and the modified function
\begin{equation*}
    v_\epsilon(z,t) := (1 + \epsilon t) \cdot v(z, \epsilon^{-1} \log(1+\epsilon t)), \quad \forall (z,t) \in \overline{\mathcal{M}(\epsilon)}.
\end{equation*}
This modification makes $v_\epsilon$ a strict supersolution to the LFDE on $\mathcal{M}(\epsilon)$:
\begin{equation*}
    (\partial_t v_\epsilon - \Delta \log( v_\epsilon)) (z,t) = \epsilon \cdot v(z,\epsilon^{-1} \log(1+\epsilon t)) > 0, \quad \forall (z,t) \in \mathcal{M}(\epsilon).
\end{equation*}
Since $u$ and $v$ are continuous on $\overline{\mathcal{M}}$ compact, they are uniformly continuous. In particular, from the inequality
\begin{equation*}
    0 < t - \epsilon^{-1} \log(1+ \epsilon t) \leq T - \epsilon^{-1} \log(1+ \epsilon T), \quad \forall t \in (0,T),
\end{equation*}
we see that $\abs{t - \epsilon^{-1} \log(1+ \epsilon t)}$ converges to zero uniformly in $t \in (0,T)$ as $\epsilon \searrow 0$. Thus, for $\epsilon$ sufficiently small, $\abs{v(z,t) -v(z,\epsilon^{-1} \log(1+ \epsilon t))} < \delta/2$, and $\abs{u(z,t) -u(z,\epsilon^{-1} \log(1+ \epsilon t))} < \delta/2$, whenever $(z,t),(z,\epsilon^{-1} \log(1+ \epsilon t)) \in \overline{\mathcal{M}}$.

We now consider those points lying in both $\mathcal{M}$ and the shifted spacetime $\mathcal{M}(\epsilon)$.

\begin{claim}
Define $M(\epsilon) = \mathcal{M} \cap \mathcal{M}(\epsilon)$. For $\epsilon > 0$ sufficiently small, $v_{\epsilon} \geq  u$ on $\partial_P\left(M(\epsilon)\right)$.
\end{claim}
\begin{proof}[Proof of Claim]
Fix $(z,t) \in \partial_P \left(M(\epsilon)\right)$. Note that $\partial_P \left(M(\epsilon)\right) \subseteq \left( \partial_P \mathcal{M} \right) \cup \left( \partial_P \left(\mathcal{M}(\epsilon)\right) \right)$. This splits our analysis into two cases:
\begin{enumerate}[label=(\Roman*)]
    \item $(z,t) \in \partial_P \mathcal{M}$. In this case we have the inequality
    \begin{align*}
        v_\epsilon(z,t) &\geq v(z,t) - \abs{v_\epsilon(z,t) - v(z,t)}\\
        &\geq u(z,t) + \delta - \abs{\epsilon t} \cdot \abs{v(z, \epsilon^{-1} \log(1+\epsilon t))} - \abs{v(z,\epsilon^{-1} \log(1+\epsilon t)) - v(z,t)}\\
        &\geq u(z,t) + \delta - \epsilon \cdot T \cdot \norm{v}_\infty - \frac{\delta}{2} \geq u(z,t),
    \end{align*}
    for any $\epsilon$ sufficiently small.
    \item $(z,t) \in \partial_P \left(\mathcal{M}(\epsilon)\right)$. Note that, this implies $(z,\epsilon^{-1} \log(1 + \epsilon t)) \in \partial_P \mathcal{M}$, and therefore we have the inequality
    \begin{align*}
        v_\epsilon(z,t) &\geq v(z,\epsilon^{-1} \log(1 + \epsilon t))\\
        &\geq u(z,\epsilon^{-1} \log(1 + \epsilon t)) + \delta\\
        &\geq u(z,t) + \delta - \abs{u(z,\epsilon^{-1} \log(1 + \epsilon t)) - u(z,t)}\\
        &\geq u(z,t) + \delta - \frac{\delta}{2} \geq u(z,t),
    \end{align*}
    for any $\epsilon$ sufficiently small. \qedhere
\end{enumerate}
\end{proof}

\begin{claim}
$v_\epsilon \geq u$ on $M(\epsilon)$.
\end{claim}
\begin{proof}[Proof of claim]
Let $M_t$ denote $M(\epsilon) \cap \left(D \times \{t\}\right)$ for each $t \in (0,T)$. If the claim doesn't hold, then for $n \in \mathbb{N}$ sufficiently large, the times
\begin{equation*}
    t_n := \inf \{ t \in (0,T) : \min_{\overline{M_t}} (v_\epsilon - u)(z,t) \leq -1/n \} \in (0,T),
\end{equation*}
are well-defined. As $\overline{M_{t_n}}$ is compact, there exists $z_n \in \overline{M_{t_n}}$ such that $(v_\epsilon - u)(z_n,t_n) = -1/n$. By the previous claim, $(v_\epsilon - u) \geq 0$ on the parabolic boundary $\partial_P \left( M(\epsilon) \right)$, and hence $z_n \in M_{t_n}$. Since the point $(z_n,t_n)$ is in the interior, we have the inequalities
\begin{equation*}
    (v_\epsilon - u) = - 1/n, \quad \Delta(v_\epsilon - u) \geq 0, \quad \partial_t(v_\epsilon - u) \leq 0,
\end{equation*}
and therefore
\begin{align*}
    0 &< \epsilon v(z_n, \epsilon^{-1}\log(1+t_n \epsilon))\\
    &\leq (\partial_t v_\epsilon - \Delta \log( v_\epsilon)) (z_n,t_n) - (\partial_t u - \Delta \log(u)) (z_n,t_n) \\ &\leq \Delta \log(u) - \Delta \log(v_\epsilon)\\ &= \left(\frac{\Delta u}{u} - \frac{\abs{\nabla u}^2}{u^2}\right) - \left(\frac{\Delta v_\epsilon}{v_\epsilon} - \frac{\abs{\nabla v_\epsilon }^2}{v_\epsilon^2}\right) \\ &= \frac{1}{v_\epsilon} \Delta( u - v_\epsilon) + (\frac{1}{u} - \frac{1}{v_\epsilon}) \Delta u + \abs{\nabla u}^2 \left( \frac{1}{v_\epsilon^2} - \frac{1}{u^2} \right) \\ &\leq \frac{1}{n} \left(   \frac{ \abs{\Delta u}}{ u v_\epsilon} + \abs{\nabla u}^2 \left( \frac{(u + v_\epsilon)}{u^2 v_\epsilon^2} \right) \right).
\end{align*}
Since $u$ and $v$ are uniformly bounded away from zero, and $u,v \in C^{2,1}(\overline{\mathcal{M}})$, there exists $C<\infty$, independent of $n$, such that
\begin{equation*}
    0 < \epsilon \leq \frac{1}{v(z_n, \epsilon^{-1}\log(1+t_n \epsilon))} \cdot \left( \frac{1}{n} \left(   \frac{ \abs{\Delta u}}{ u v_\epsilon} + \abs{\nabla u}^2 \left( \frac{(u + v_\epsilon)}{u^2 v_\epsilon^2} \right) \right) \right) \leq \frac{C}{n}.
\end{equation*}
Taking $n \rightarrow \infty$, this is gives a contradiction.
\end{proof}
Fix $(z,t) \in \mathcal{M}$. Recall that $\abs{t - \epsilon^{-1} \log(1+ \epsilon t)}$ converges to zero uniformly in $t \in (0,T)$ as $\epsilon \searrow 0$. Therefore, as $\mathcal{M}$ is open, for any $\epsilon$ sufficiently small, $(z,t) \in M(\epsilon)$. Moreover, by continuity, $v_\epsilon(z,t)$ converges to $v(z,t)$ as $\epsilon \searrow 0$, and we can conclude that $v \geq u$ on all of $\mathcal{M}$.
\end{proof}

\section{Miscellaneous}

\begin{lem}[\cite{hebert1968supports}, Corollary 2.8]\label{HebertLacey}
If $X$ is a perfect and compact subset of a Riemann surface $M$, then there exists a non-atomic Radon measure $\mu \in \mathcal{R}(M)$ with $\supp(\mu) = X$.
\end{lem}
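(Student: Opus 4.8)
The plan is to realise $\mu$ as a countable convex combination $\mu=\sum_k 2^{-k}\nu_k$ of non-atomic probability measures $\nu_k$, each supported on a small compact Cantor set $C_k\subseteq X$, with the family $\{C_k\}$ arranged to be dense in $X$. Since the support of such a sum is the closure of the union of the supports $\supp\nu_k$, this will force $\supp\mu=X$, while non-atomicity and the Radon property come for free from the building blocks.

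First I would extract the local ingredient. Fix a metric $d$ compatible with the (compact, metrizable) subspace topology on $X$, and note that every nonempty relatively open $U\subseteq X$ is a nonempty Polish space (an open subset of the Polish space $X$) with \emph{no isolated points}: an isolated point of $U$ would be open in $U$, hence open in $X$, contradicting that $X$ is perfect. By the perfect set theorem (see \cite[Theorem 6.2]{kechris1995perfect}), $U$ therefore contains a subset $C$ homeomorphic to the Cantor space $\{0,1\}^{\mathbb{N}}$; being a continuous image of a compact space, $C$ is compact and hence closed in $M$. Pushing forward the $(\tfrac12,\tfrac12)$-Bernoulli product measure on $\{0,1\}^{\mathbb{N}}$ through such a homeomorphism yields a Borel probability measure on $C\subseteq X$ that is non-atomic and has full support $C$.

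Next, fix a countable dense subset $D\subseteq X$ and enumerate as $(y_k,q_k)_{k\in\mathbb{N}}$ all pairs with $y_k\in D$ and $q_k>0$ rational. For each $k$ the set $B(y_k,q_k)\cap X$ is nonempty (it contains $y_k$) and relatively open, so by the previous paragraph it contains a compact Cantor set $C_k$ carrying a non-atomic Borel probability measure $\nu_k$ with $\supp\nu_k=C_k$. Put
\begin{equation*}
  \mu := \sum_{k\in\mathbb{N}} 2^{-k}\,\nu_k .
\end{equation*}
This is a Borel probability measure on $M$ supported in $X$, hence a finite Borel measure with compact support on the locally compact second-countable Hausdorff space $M$, so it is Radon and non-zero. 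It is non-atomic, since $\mu(\{x\})=0$ for $x\notin X$ and $\mu(\{x\})=\sum_k 2^{-k}\nu_k(\{x\})=0$ for $x\in X$; thus $\mu\in\mathcal{R}(M)$. For the support: an open $W$ is $\mu$-null iff $W\cap\supp\nu_k=\emptyset$ for all $k$, i.e.\ iff $W\cap\bigcup_k C_k=\emptyset$, so $\supp\mu=\overline{\bigcup_k C_k}\subseteq X$; and given $x\in X$ and $\delta>0$, choosing $y\in D$ with $d(x,y)<\delta/3$ and rational $q\in(\delta/3,2\delta/3)$ gives $(y,q)=(y_k,q_k)$ with $C_k\subseteq B(y_k,q_k)\subseteq B(x,\delta)$, so $x\in\overline{\bigcup_k C_k}=\supp\mu$. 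Hence $\supp\mu=X$.

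The statement is classical, so there is no deep obstacle; the one step that genuinely needs care is the local ingredient — verifying that relative openness inside the perfect space $X$ is enough to rule out isolated points (so the perfect set theorem applies), and that the Cantor subset produced is compact, which is precisely what makes the assembled measure a Radon measure with compact support. The remainder is bookkeeping about supports of countable sums of measures.
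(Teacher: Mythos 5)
The paper does not prove this lemma; it simply cites Hebert \& Lacey \cite{hebert1968supports}. Your proposal is therefore a genuinely different contribution: a correct, self-contained, elementary construction. The essential steps all hold: relative openness of $U$ in the perfect set $X$ rules out isolated points of $U$, so Kechris's perfect set theorem yields a compact Cantor subset $C\subseteq U$; the pushforward Bernoulli measure is a non-atomic probability measure on $C$ with full support; and enumerating such Cantor sets densely (via a countable dense set and rational radii) forces $\supp\mu=\overline{\bigcup_k C_k}=X$ for the weighted sum $\mu=\sum_k 2^{-k}\nu_k$. Non-atomicity and finiteness are clear, and a finite Borel measure with compact support on a second-countable locally compact Hausdorff space is automatically Radon, so $\mu\in\mathcal{R}(M)$. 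Compared with citing Hebert \& Lacey, whose Corollary 2.8 sits inside a more general study of supports of regular Borel measures, your argument is more direct and makes the result self-contained within the paper's toolkit. The only cosmetic slip: you fix a metric $d$ on $X$ itself and then write $B(y_k,q_k)\cap X$, but these balls already lie inside $X$, so the intersection is redundant; this does not affect the argument.
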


Throughout Sections~\ref{section 4}~\&~\ref{section 5} we referred to the following result. 
\begin{lem}[Schwarz Lemma \cite{yau1973remarks}]\label{Schwarz-Pick}
Let $(M_1,g_1)$, $(M_2,g_2)$ be Riemannian surfaces. If
\begin{enumerate}
    \item $(M_1,g_1)$ is complete,
    \item the Gaussian curvature $K(g_1) \geq -a_1$, for some $a_1 \geq 0$,
    \item $K(g_2) \leq -a_2$, for some $a_2 > 0$,
\end{enumerate}
then any conformal map $f:M_1 \rightarrow M_2$ satisfies the inequality.
\begin{equation*}
   f^*(g_2) \leq \frac{a_1}{a_2} g_1. 
\end{equation*}
\end{lem}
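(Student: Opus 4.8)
The plan is to run the classical Ahlfors--Schwarz argument with the given curvature pinching. First I would make two reductions. If $a_1 = 0$ then $K(g_1)\geq -\epsilon$ for every $\epsilon>0$, so proving the statement for each such $\epsilon$ and letting $\epsilon\searrow 0$ forces $f^*g_2\equiv 0$; hence I may assume $a_1>0$. Since $f$ is conformal, write $f^*g_2 = u\,g_1$ with $u\geq 0$; the claimed inequality is vacuous on $\{u=0\}$, so it suffices to prove $u\leq a_1/a_2$ on the open set $\Omega:=\{u>0\}$, where $f$ is a local conformal diffeomorphism. On $\Omega$ the Gaussian curvature of $f^*g_2$ equals $K(g_2)\circ f\leq -a_2$, and the two-dimensional conformal change formula $K(e^{2\varphi}g)=e^{-2\varphi}(K(g)-\Delta_g\varphi)$ with $e^{2\varphi}=u$ gives the key differential inequality
\begin{equation*}
\Delta_{g_1}\log u = 2\bigl(K(g_1)-u\,(K(g_2)\circ f)\bigr)\geq 2\bigl(a_2u-a_1\bigr)\qquad\text{on }\Omega.
\end{equation*}

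If $M_1$ were compact the proof would end here: at an interior maximum of $u$ one has $\Delta_{g_1}\log u\leq 0$, forcing $a_2u-a_1\leq 0$ there and hence everywhere. The genuine difficulty, which I expect to be the main obstacle, is that in the complete non-compact case $u$ need not be bounded above, so a maximum principle at infinity cannot be applied directly. I would handle this by an exhaustion-plus-barrier argument. Fix $p\in M_1$, set $\rho:=d_{g_1}(p,\cdot)$, and for $R>0$ let $B_R$ be the geodesic ball of radius $R$, relatively compact by Hopf--Rinow. On a geodesic ball of radius $R$ in the model surface $\mathbb{H}_{a_1}$ of constant curvature $-a_1$ there is a unique complete conformal metric of constant curvature $-a_2$; relative to the model metric $g_{a_1}$ it has the form $\psi_R(\rho)\,g_{a_1}$ with $\psi_R\colon[0,R)\to(0,\infty)$ radial, non-decreasing, and blowing up as $s\to R^-$. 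Because both model curvatures are constant, $\psi_R$ satisfies the radial form of $\Delta_{g_{a_1}}\log\psi_R = 2(a_2\psi_R-a_1)$.

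Setting $w_R(x):=\psi_R(\rho(x))$ on $B_R\subseteq M_1$, the Laplacian comparison theorem --- available since $K(g_1)\geq -a_1$ gives $\Ric_{g_1}\geq -a_1\, g_1$ --- yields $\Delta_{g_1}\rho\leq\Delta_{g_{a_1}}\rho$, in the barrier sense at cut points; together with $\psi_R'\geq 0$ and the model ODE this shows $w_R$ is a supersolution, $\Delta_{g_1}\log w_R\leq 2(a_2w_R-a_1)$ on $B_R$. I would then compare $u$ and $w_R$ on $\overline{B_R}\cap\Omega$: since $u$ is finite there while $w_R\to\infty$ at $\partial B_R$, the function $\log u-\log w_R$ attains its maximum at an interior point $x_0$ (or is identically $-\infty$, when there is nothing to prove); at $x_0$, using Calabi's trick to replace $\rho$ by a smooth upper barrier, $\Delta_{g_1}(\log u-\log w_R)(x_0)\leq 0$, so $2(a_2u-a_1)\leq 2(a_2w_R-a_1)$ at $x_0$, whence $u\leq w_R$ on all of $B_R$. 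Finally, as $R$ increases the metrics $\psi_R\, g_{a_1}$ decrease, and since the balls exhaust $\mathbb{H}_{a_1}$ and $\tfrac{a_1}{a_2}g_{a_1}$ is the unique complete conformal metric of curvature $-a_2$ on $\mathbb{H}_{a_1}$, one gets $\psi_R\searrow a_1/a_2$ locally uniformly. Letting $R\to\infty$ gives $u\leq a_1/a_2$ on $M_1$, completing the proof. The only steps that are not routine are the construction of the model barrier $\psi_R$ together with the monotonicity $\psi_R\searrow a_1/a_2$, and the Calabi-trick justification of the comparison at cut points; everything else is standard.
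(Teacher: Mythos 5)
The paper does not prove this lemma at all: it is quoted from Yau's paper \cite{yau1973remarks} as a known result, so there is no in-paper argument to compare against. Judged on its own, your proof is correct and is the classical Ahlfors--Schwarz exhaustion-and-barrier argument. The key differential inequality $\Delta_{g_1}\log u \ge 2(a_2u-a_1)$ on $\{u>0\}$ is derived correctly from the conformal change of curvature, the reduction to $a_1>0$ is fine (and consistent with the conclusion, since $\tfrac{0}{a_2}g_1=0$), and the transplanted barrier $w_R=\psi_R\circ\rho$ is a genuine supersolution by Laplacian comparison once one knows $\psi_R'\ge 0$; that monotonicity, which your argument really does need, follows from the radial ODE $(S\phi')'=2S(a_2e^{\phi}-a_1)\ge 0$ with $S=\sinh(\sqrt{a_1}\,r)$ together with $\psi_R\ge a_1/a_2$ (itself a compact maximum-principle comparison, so no circularity). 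The remaining flagged steps --- Calabi's trick at cut points, the monotone convergence $\psi_R\searrow a_1/a_2$ via elliptic estimates and uniqueness of the complete constant-curvature metric --- are all standard and fillable. For the record, Yau's own proof in the cited reference avoids the exhaustion by instead applying his generalized maximum principle on complete manifolds with Ricci bounded below (to a suitably normalized auxiliary function, since $u$ is not known a priori to be bounded); your route is more elementary in that it uses only the compact maximum principle plus explicit barriers, at the cost of the model-metric computations.
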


The following Lemma is used in Section~\ref{section 3} when constructing the ambient space for our spacetime. It states that if you have a sequence of $n$-dimensional manifolds which embed into one another, then there is a well-defined smooth $n$-dimensional manifold that all of these manifolds embed into.
\begin{lem}[Direct limit of embedded manifolds]\label{lem direct lim}
Let $\{ M_i \}_{i \in \mathbb{N}}$ be a smooth family of manifolds which embed smoothly into one another $M_i \hookrightarrow M_j$, for $i \leq j$. Then the direct limit:
\begin{equation}
    M = \lim_{\rightarrow} M_{i} :=  \faktor{\bigsqcup_{i \in \mathbb{N}} M_{i}}{\sim} \ ,
\end{equation}
where $x \sim y$ iff $x = y$ within an embedding, can be made into a smooth manifold. Moreover, the canonical maps $f_i : M_i \hookrightarrow M$, $x \mapsto [x]$ will be smooth embeddings.
\end{lem}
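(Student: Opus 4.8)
The plan is to put the quotient topology on the underlying set $M$ coming from the disjoint union $\bigsqcup_{i \in \mathbb{N}} M_i$, and then transport the smooth charts of each $M_i$ through the canonical maps $f_i$. Throughout I will use that the embeddings $\iota_{ij} : M_i \hookrightarrow M_j$ ($i \le j$) are compatible, i.e. $\iota_{ii} = \mathrm{id}$ and $\iota_{jk} \circ \iota_{ij} = \iota_{ik}$ — this holds automatically in our application, where $\iota_{ij}$ is the worldline map $x \mapsto x(t_j)$ — so that the transitive closure of the relation generated by $x \sim \iota_{ij}(x)$ is simply: for $x \in M_i$ and $y \in M_j$, one has $x \sim y$ iff $\iota_{ik}(x) = \iota_{jk}(y)$ for some $k \ge i, j$. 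I will also use that all $M_i$ have the same dimension $n$, so that by Brouwer's invariance of domain each $\iota_{ij}$ has open image; hence the $\iota_{ij}$ are \emph{open} embeddings.

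First I would record the basic facts about the quotient map $q : \bigsqcup_i M_i \to M$. Using compatibility together with injectivity of the $\iota_{jk}$, one checks that the saturation of an open set $U \subseteq M_i$ meets $M_k$ in $\iota_{ik}(U)$ when $k \ge i$ and in $\iota_{ki}^{-1}(U)$ when $k \le i$; both are open, the former because $\iota_{ik}$ is an open embedding, the latter by continuity of $\iota_{ki}$. Thus $q$ is an open map, so each $f_i = q|_{M_i}$ is a continuous open injection, i.e. a topological embedding, and moreover $f_i(M_i) = f_j(\iota_{ij}(M_i)) \subseteq f_j(M_j)$ for $i \le j$; hence $M = \bigcup_i f_i(M_i)$ is an increasing union of open sets, each homeomorphic to $M_i$ via $f_i$.

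Next I would verify that $M$ is a topological $n$-manifold. Local Euclidean-ness is immediate from the cover by the open sets $f_i(M_i) \cong M_i$, and second countability follows since $M$ is a countable union of second countable open subspaces. The Hausdorff property is the one place such constructions typically break, so I expect this to be the main point to get right: given $[x] \ne [y]$ with $x \in M_i$, $y \in M_j$ and $i \le j$, replace $x$ by $\iota_{ij}(x) \in M_j$ so that both representatives lie in $M_j$; then $x \ne y$ in the Hausdorff manifold $M_j$, so separate them by disjoint open $V, W \subseteq M_j$. The sets $f_j(V)$, $f_j(W)$ are then disjoint open neighbourhoods of $[x]$, $[y]$, since $f_j(v) = f_j(w)$ for $v \in V$, $w \in W$ would give $\iota_{jk}(v) = \iota_{jk}(w)$ for some $k \ge j$, and injectivity of $\iota_{jk}$ forces $v = w$, contradicting $V \cap W = \emptyset$.

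Finally I would install the smooth structure. Take as atlas all charts of the form $\psi \circ f_i^{-1}$ on $f_i(U)$, where $(U,\psi)$ ranges over charts of $M_i$ and $i$ over $\mathbb{N}$. For two such charts coming from $M_i$ and $M_j$ with $i \le j$, on the overlap one has $f_j^{-1} \circ f_i = \iota_{ij}$ (again by injectivity of $f_j$ and $f_i(x) = f_j(\iota_{ij}(x))$), so the coordinate change is $\psi_j \circ \iota_{ij} \circ \psi_i^{-1}$, a composition of smooth maps; hence all transitions are smooth and the atlas is smooth. With respect to this structure each $f_i$ is by construction a smooth open embedding, which is the remaining assertion. (It is also worth noting that if the $M_i$ are connected and nonempty then so is $M$, being an increasing union of connected open subsets.)
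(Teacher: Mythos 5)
Your proof is correct and follows essentially the same approach as the paper: equip the direct limit with the quotient/final topology, verify the canonical maps are open embeddings, check second countability and the Hausdorff property, then transport charts from each $M_i$. You are somewhat more careful on two points the paper treats briefly or implicitly — invoking invariance of domain to justify that the $\iota_{ij}$ are open embeddings (which the paper's computation $f_j^{-1}\circ f_i(U)=\psi_{ij}(U)$ silently relies on), and giving an explicit Hausdorff argument where the paper just asserts it — but the overall route is the same.
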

\begin{proof}
Equip the set $M$ with the final topology, so that $U \subseteq M$ is open iff $f_i^{-1}(U)$ is open, for all $i \in \mathbb{N}$. In particular, each $f_i$ is now a continuous injection.
Let $\psi_{ij}$ denote the embedding $M_i \hookrightarrow M_j$. Note that 
\begin{equation*}
    f_{i} = f_{j} \circ \psi_{ij}, \quad \forall i \leq j.
\end{equation*}
In particular, for any open subset $U \subseteq M_i$, we have
\begin{align*}
     f_j^{-1} \circ f_i (U) = \psi_{ji}^{-1} \circ f_i^{-1} \circ f_i (U) = \psi_{ji}^{-1} (U),   \quad &\text{if } j < i\\
     f_j^{-1} \circ f_i (U) = f_j^{-1} \circ f_j \circ  \psi_{ij} (U) = \psi_{ij} (U), \quad &\text{if } j \geq i.
\end{align*}
That is, $f_i(U)$ is open in $M$ and $f_i$ is an embedding.

Choosing a countable base $\{B_{ij} : j \in \mathbb{N} \}$ of $M_i$ for each $i \in \mathbb{N}$, the collection
\begin{equation*}
    \mathcal{B} := \{ f_i(B_{ij}) \subseteq M : i,j \in \mathbb{N} \},
\end{equation*}
is then a countable base of $M$. $M$ being Hausdorff follows from the embedded subspaces $M_i$ being Hausdorff. Finally, for each $j \in \mathbb{N}$, consider the atlas $\{ c_\alpha : U_\alpha \subset M_j \hookrightarrow \mathbb{R}^n \}_{\alpha \in A_j}$ on $M_j$. Define the new collection of charts $\{ \tilde{c}_\alpha : \tilde{U}_\alpha \subset M \hookrightarrow \mathbb{R}^n \}_{\alpha \in A_j}$ by
\begin{equation*}
    \tilde{U}_\alpha := f_j(U_\alpha), \quad \tilde{c}_\alpha := c_\alpha \circ f_j^{-1}.
\end{equation*}
The union over $j \in \mathbb{N}$ of all such charts gives a well defined smooth atlas on $M$. With respect to this smooth structure, the canonical maps are smooth.
\end{proof}

The following is a simple application of the uniformisation theorem needed in Sections~\ref{section 4}~\&~\ref{section 5}.
\begin{lem}\label{lem uni planar}
Let $M$ be the sphere $S^2$ or the complex plane $\mathbb{C}$ equipped with its standard conformal structure. Consider a Riemann surface $A \subseteq M$ given by some open subset. If the complement of $A$ in $M$ contains no isolated points, then $A$ is hyperbolic.
\end{lem}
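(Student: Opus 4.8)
The plan is to reduce, via the uniformization theorem, to a short list of model surfaces and then exploit that $M$ is the sphere or the plane. I would work one connected component $A_0\subseteq A$ at a time and prove the contrapositive: \emph{if $A_0$ is not hyperbolic, then $M\setminus A$ is empty or contains an isolated point}. (This yields the lemma whenever $M\setminus A$ is a nonempty closed set without isolated points, which is the only situation in which it is invoked.) Since $A_0$ is an open subset of $M$ it is a Riemann surface, so by uniformization its universal cover is $D$, $\mathbb{C}$ or $S^2$; the first case is precisely hyperbolicity, so assume it is $\mathbb{C}$ or $S^2$. The deck group acts freely by biholomorphisms, every automorphism of $S^2$ has a fixed point, and the fixed-point-free automorphisms of $\mathbb{C}$ are exactly the nonzero translations, whose discrete subgroups are lattices of rank $0$, $1$ or $2$. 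Hence $A_0$ is biholomorphic to one of $S^2$, $\mathbb{C}$, $\mathbb{C}^{\ast}:=\mathbb{C}\setminus\{0\}$ or a torus.

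Next I would discard the compact models. If $A_0$ is compact (the cases $A_0\cong S^2$ and $A_0\cong$ torus) then it is a nonempty clopen subset of the connected surface $M$, so $A_0=M$; since $\mathbb{C}$ is noncompact and no torus is homeomorphic to $S^2$, this forces $M=S^2=A_0$, whence $A=M$ and $M\setminus A=\emptyset$. It remains to treat $A_0\cong\mathbb{C}$ and $A_0\cong\mathbb{C}^{\ast}$. Viewing $A_0\subseteq M\subseteq S^2$, I claim $S^2\setminus A_0$ is a single point in the first case and exactly two points in the second. To see this, apply a M\"obius coordinate change so that $\infty\notin A_0$ (and, if $S^2\setminus A_0$ contains a further point, also $0\notin A_0$); then a proper simply connected subdomain of $\mathbb{C}$ is conformally the disk by the Riemann mapping theorem, and a doubly connected domain in $S^2$ with a nondegenerate complementary component is conformally a bounded annulus or a punctured disk. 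All of these are hyperbolic, contradicting $A_0\cong\mathbb{C}$ or $\mathbb{C}^{\ast}$, so the complementary components must be points.

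Finally I would pass from $M\setminus A_0$ to $M\setminus A$. By the previous step $M\setminus A_0$ is a nonempty finite set. Since $M$ is locally connected, the component $A_0$ is open, so $A\setminus A_0$ is an open subset of $M$ contained in the finite set $M\setminus A_0$, hence empty; therefore $M\setminus A=M\setminus A_0$ is a nonempty finite subset of $M$, which plainly has an isolated point. This establishes the contrapositive.

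The step I expect to be the main obstacle is pinning down $M\setminus A_0$ in the $\mathbb{C}$ and $\mathbb{C}^{\ast}$ cases: uniformization by itself does not rule out $A_0$ being a "thick" annulus or the complement of a nondegenerate continuum in $S^2$, and excluding these possibilities requires the Riemann mapping theorem together with the conformal classification of doubly connected (ring) domains.
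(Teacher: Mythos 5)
Your argument is correct (modulo one small wording slip noted below), but it is substantially longer than the paper's: where you essentially reprove the planar uniformization theorem, the paper simply cites it. The paper reduces to $M=\mathbb{C}$ by deleting one complementary point when $M=S^2$, observes that for any connected component $\tilde A$ the closed set $\mathbb{C}\setminus\tilde A\supseteq\mathbb{C}\setminus A$ has no isolated points and is nonempty, hence perfect and so contains at least two points, and then invokes the planar uniformization theorem (cited from Goluzin) to conclude $\tilde A$ is covered by $D$. Your route — uniformize a component $A_0$, classify the non-hyperbolic models as $S^2$, $\mathbb{C}$, $\mathbb{C}^{\ast}$ or a torus, discard the compact cases, and then use the Riemann mapping theorem together with the conformal classification of doubly connected domains to force $S^2\setminus A_0$ to be a singleton or a doubleton — is in effect a proof of that same underlying theorem, just unpacked; what it buys is self-containment, at the cost of relying on the (equally non-trivial) classification of ring domains. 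The slip: after concluding $A=A_0$, you assert that $M\setminus A_0$ is nonempty, but when $M=\mathbb{C}$ and $A_0\cong\mathbb{C}$ one has $S^2\setminus A_0=\{\infty\}\not\subseteq M$, so $M\setminus A_0=\emptyset$. This does no real harm, since in that case $A=M$ and the "$M\setminus A$ is empty" branch of your contrapositive is what applies, but the word "nonempty" should be removed. You are also right that the lemma as literally stated fails when $A=M$; your contrapositive formulation accommodates this, and the paper only applies the lemma when $M\setminus A$ is known to be nonempty.
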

\begin{proof}
Since we can identify the once punctured sphere with the plane, it suffices to consider the case $M = \mathbb{C}$. Consider a connected component $\tilde{A}$ of $A$. If the complement of $A$ (and hence $\tilde{A}$) in $\mathbb{C}$ has no isolated points, then $\mathbb{C} \setminus \tilde{A}$ contains at least 2 points, and so by the planar uniformisation theorem \cite{goluzin1969geometric}, there exists a holomorphic covering $D \rightarrow \tilde{A}$.
\end{proof}
\end{appendix}

\textit{Acknowledgements: This work was completed as part of my studentship supported by EPSRC, reference number 2104917, grant number EP/R513374/1.}

\printbibliography
\end{document}